\numberwithin{equation}{section}
\definecolor{darkblue}{rgb}{0.0,0,0.7}
\newtheorem{theorem}{Theorem}[section]
\newtheorem{proposition}[theorem]{Proposition}
\newtheorem{lemma}[theorem]{Lemma}
\newtheorem{corollary}[theorem]{Corollary}
\newtheorem{conjecture}[theorem]{Conjecture}
\newtheorem*{conjecture*}{Conjecture}
\newtheorem*{proposition*}{Proposition}
\theoremstyle{definition}
\newtheorem{definition}[theorem]{Definition}
\newtheorem{example}[theorem]{Example}
\newtheorem{remark}[theorem]{Remark}
\newcommand{\Josh}[1]{\todo[size=\tiny,inline,color=green!30]{#1
      \\ \hfill --- A.}}
\newcommand{\Jianping}[1]{\todo[size=\tiny,inline,color=magenta!30]{#1
      \\ \hfill --- P.}}
\newcommand{\UU}{\mathcal{U}}
\newcommand{\HH}{\mathcal{H}}
\newcommand{\GQ}{\mathrm{GQ}}
\newcommand{\st}{\mathsf{st}}
\newcommand{\Peak}{\mathsf{Peak}}
\newcommand{\SYT}{\mathsf{SYT}}
\newcommand{\ShSSYT}{\mathsf{ShSSYT}}
\newcommand{\ShSVT}{\mathsf{ShSVT}}
\newcommand{\res}{\mathsf{res}}
\definecolor{darkred}{rgb}{0.7,0,0} 
\newcommand{\defn}[1]{{\color{darkred}\emph{#1}}} 
\newcommand{\N}{\mathbb{N}}
\newcommand{\Z}{\mathbb{Z}}
\newcommand{\fkS}{\mathfrak{S}}
\newcommand{\bfa}{\textbf{a}}
\newcommand{\bfb}{\textbf{b}}
\newcommand{\bfi}{\textbf{i}}
\newcommand{\bfx}{\textbf{x}}
\newcommand{\SDT}{\mathrm{SDT}}
\newcommand{\ShSet}{\mathrm{ShSet}}
\renewcommand{\to}[1]{\overset{#1}{\longrightarrow}}
\title[Kra\'skiewicz--Hecke insertion]{Type C $K$--stanley symmetric functions and Kra\'skiewicz--Hecke insertion}
\author[Arroyo]{Joshua Arroyo}
\address{Department of Mathematics,  University of Florida, Gainesville, FL 32611}
\email{joshuaarroyo@ufl.edu}
\author[Hamaker]{Zachary Hamaker}
\address{Department of Mathematics,  University of Florida, Gainesville, FL 32611}
\email{zhamaker@ufl.edu}
\author[Hawkes]{Graham Hawkes}
\address{Industry}
\email{ghawkes1217@gmail.com}
\author[Pan]{Jianping Pan}
\address{School of Mathematical and Statistical Sciences, Arizona State University, AZ 85287}
\email{jianping.pan@asu.edu}
\thanks{\today}
\begin{document}

\begin{abstract}
We study Type C $K$--Stanley symmetric functions, which are $K$--theoretic extensions of the Type C Stanley symmetric functions.
They are indexed by signed permutations and can be used to enumerate reduced words via their expansion into Schur $Q$-functions, which are indexed by strict partitions.
A combinatorial description of the Schur $Q$- coefficients is given by Kra\'skiewicz insertion.
Similarly, their $K$--Stanley analogues are conjectured to expand positively into $GQ$'s, which are $K$--theory representatives for the Lagrangian Grassmannian introduced by Ikeda and Naruse also indexed by strict partitions.
We introduce a $K$--theoretic analogue of Kra\'skiewicz insertion, which can be used to enumerate 0--Hecke expressions for signed permutations and gives a conjectural combinatorial rule for computing this $GQ$ expansion.

We show the Type C $K$--Stanleys for certain fully commutative signed permutations are skew $GQ$'s.
Combined with a Pfaffian formula of Anderson's, this allows us to prove Lewis and Marberg's conjecture that $GQ$'s of (skew) rectangle  shape are $GQ$'s of trapezoid shape.
Combined with our previous conjecture, this also gives an explicit combinatorial description of the skew $GQ$ expansion into $GQ$'s.
As a consequence, we obtain a conjecture for the product of two $GQ$ functions where one has trapezoid shape.

\end{abstract}

\maketitle

\section{Introduction}

Although this paper is combinatorial in methods and results, our underlying objective is to understand geometric properties of the isotropic or Lagrangian Grassmannian.
Beginning in the 19th century with work of Schubert, mathematicians have investigated enumerative properties of the intersections of curves and hypersurfaces.
The modern approach to calculating these quantities is to interpret them by computing the cup product for the cohomology ring of an appropriate projective variety.
In this guise, such computations are fundamentally combinatorial in nature.
For example, Schubert's computations are encoded in the cohomology of the Grassmannian $\mathrm{Gr}(k,n)$, the space of $k$--planes in $\mathbb{C}^n$, whose cup product is computed by the Littlewood--Richardson rule for multiplying Schur functions (see e.g.~\cite{manivel2001symmetric} for a textbook treatment).
Similarly, the cup product in cohomology for the orthogonal and Lagrangian Grassmannians is computed by multiplying Schur $P$- and Schur $Q$- functions~\cite{hiller1986pieri,pragacz2006algebro}.

A major line of active research is to understand combinatorially more exotic cohomology theories such as $K$--theory, which encodes finer data about the boundaries of intersections.
For $K$--theory, products in the Grassmannian were first computed combinatorially by Buch~\cite{buch2002littlewood}, and later extended to the orthogonal Grassmannian~\cite{clifford2014k,buch2016k}.
For the Lagrangian Grassmannian, $K$--theory has proved far more difficult to understand combinatorially, with the only progress being Buch and Ravikumar's Pieri rule~\cite{buch2012pieri}.
We offer a new pathway towards understanding such products based on Type C $K$--Stanley symmetric functions.

\subsection{Stanley symmetric functions and Kra\'skiewicz--Hecke insertion}

The Stanley symmetric functions $F_w$ are symmetric functions indexed by permutations.
Introduced by Stanley to enumerate reduced words~\cite{stanley1984number}, Stanley symmetric functions are also stable limits of the Schubert polynomials $\fkS_w$, which represent cohomology classes in the flag variety $\mathrm{Fl}(n)$, and are themselves cohomology representatives for graph Schubert varieties~\cite{knutson2013positroid}.
Edelman-Greene insertion shows they expand into the Schur basis with non-negative coefficients enumerated by Edelman--Greene insertion tableaux~\cite{edelman1987balanced}, showing reduced words for permutations are enumerated by standard tableaux counts.
By choosing the permutation $w$ appropriately, this expansion recovers the Littlewood--Richardson rule for products of Schur functions.
Our work continues this line of investigation, introducing a novel insertion algorithm to better understand symmetric functions arising in Schubert calculus.

The symmetric functions we study are \defn{Type C $K$--Stanley symmetric functions} $G^C_w$, introduced in~\cite{kirillov2017construction} and indexed by signed permutations.
Our main tool is a novel insertion algorithm we call \defn{Kra\'skiewicz--Hecke insertion}, which generalizes Kra\'skiewicz insertion, the Type C analogue of Edelman--Greene insertion~\cite{kraskiewcz1989reduced}.
As shown in~\cite{lam1995b}, Kra\'skiewicz insertion interprets the Schur $Q$- coefficients of a Type C Stanley symmetric function from~\cite{billey1995schubert} as certain decomposition tableaux.
Consequently, reduced words for signed permutations are enumerated by standard shifted tableaux counts.
Hecke insertion is the $K$--theoretic extension of Edelman--Greene insertion from reduced words to all words in the alphabet $\{1,2,\dots\}$~\cite{buch2008stable}.
At the level of symmetric functions, Hecke insertion shows a $K$--Stanley symmetric function\footnote{Originally known as a Stable Grothendieck polynomial for a permutation.} from~\cite{fomin1994grothendieck} expands into stable Grothendieck polynomials\footnote{Hence our preference for the newer name.} with coefficients enumerating increasing tableaux.
This expansion gives a new proof for the $K$--theory product structure of the Grassmannian and shows 0--Hecke expressions for permutations are enumerated by standard set-valued tableaux counts.
The $0$--Hecke product and $0$--Hecke expressions are defined in Section~\ref{ss:signed-perms}.

Insertion algorithms map a word $(a_1,\dots,a_p)$ to an insertion tableau $P$ and a recording tableau $Q$.
The recording tableaux for Kra\'skiewicz--Hecke insertion are standard shifted set-valued tableaux from~\cite{ikeda2013k}.
We introduce \defn{strict decomposition tableaux} (see Definition~\ref{d:strict-decomposition}) to play the role of insertion tableaux.
For $\lambda$ a strict integer partition, let $\ShSet_n(\lambda)$ be the set of standard shifted set-valued tableaux containing $n$ values and $\SDT(\lambda)$ be the set of strict decomposition tableaux of shape $\lambda$.
Let $KH$ denote Kra\'skiewicz--Hecke insertion.
For $P$ a strict decomposition tableau, $\rho(P)$ is the usual reading word of $P$ (see Section~\ref{ss:tableaux}).

\begin{theorem}
\label{t:bijection}
	For all $n \in \mathbb{N}$, the map Kra\'skiewicz--Hecke insertion is a bijection:
	\[
KH:	  \mathbb{N}^n \to{\sim} \bigsqcup_{\lambda \vdash m \leq n \ \mathrm{strict}} \SDT(\lambda) \times \ShSet_n(\lambda).
	\]
	Moreover, for $KH(a_1,\dots,a_p) = (P,Q)$, the words $(a_1,\dots,a_p)$ and $\rho(P)$ are $0$--Hecke expressions for the same signed permutation.
\end{theorem}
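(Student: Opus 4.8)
The plan is to construct $KH$ explicitly as an iterated single-element insertion procedure and prove the bijectivity by exhibiting a reverse (``reverse bumping'') algorithm, while tracking two invariants throughout: that the shape stays a valid shifted strict-partition shape with the insertion tableau a strict decomposition tableau, and that the $0$-Hecke class of the reading word is preserved. First I would set up the combinatorial bookkeeping: describe precisely what happens when a letter $a$ is inserted into a strict decomposition tableau $P$, row by row (or ``hook'' by ``hook'' in the shifted setting), distinguishing the \emph{reduced} case (a new box is created, exactly as in Kra\'skiewicz insertion) from the \emph{non-reduced} case (the letter is absorbed without changing the shape, the feature inherited from Hecke insertion). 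In the latter case the recording tableau receives a new value added to an existing box, which is why the codomain uses $\ShSet_n(\lambda)$ rather than ordinary standard shifted tableaux; in the former case a new box is added and recorded with a singleton. One then checks that after each insertion the output is again a strict decomposition tableau of shifted shape --- this is the local well-definedness step, verified by a case analysis on which of the characterizing conditions of Definition~\ref{d:strict-decomposition} could be violated and showing the insertion rule was designed precisely to restore it.

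Next I would prove invertibility. Given $(P,Q) \in \SDT(\lambda)\times \ShSet_n(\lambda)$, locate the box (and, within a set-valued box, the correct value) holding the entry $n$ of $Q$; this tells us whether the last insertion step was reduced or non-reduced and which row/hook to reverse-bump from. Running the reverse bumping extracts a letter $a_p$ and a strict decomposition tableau on $n-1$ values, and one argues by induction on $n$ that this is a two-sided inverse to $KH$. The key point here is that the forward insertion rule is \emph{deterministic} and each step is individually reversible given the recording data, so that standard ``insertion/reverse-insertion are mutually inverse'' bookkeeping applies; the shifted and $K$-theoretic features make the case analysis longer but not structurally different from the classical arguments of~\cite{kraskiewcz1989reduced, lam1995b, buch2008stable}.

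For the final assertion --- that $(a_1,\dots,a_p)$ and $\rho(P)$ represent the same signed permutation in the $0$-Hecke monoid --- I would show that each individual insertion step preserves the $0$-Hecke class of the reading word. Concretely, if $P'$ is obtained from $P$ by inserting $a$, then $\rho(P') \equiv \rho(P)\cdot a$ in the $0$-Hecke monoid of type C. This reduces to a finite list of local ``bumping moves'' on reading words, each of which must be checked to be a consequence of the type C $0$-Hecke relations (the braid and commutation relations together with the idempotency $s_i^2 = s_i$ that distinguishes the $0$-Hecke monoid from the nilCoxeter monoid and accounts for the non-reduced steps). Iterating from the empty tableau then gives $\rho(P) \equiv a_1 a_2 \cdots a_p$, as desired.

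The main obstacle I expect is the well-definedness step: verifying that insertion into a strict decomposition tableau again yields a strict decomposition tableau, since Definition~\ref{d:strict-decomposition} likely imposes several interacting conditions (on rows, on the ``hook'' interaction between a row and the next, and on which entries may repeat), and the $K$-theoretic non-reduced bumps can cascade in ways that are absent in the classical Kra\'skiewicz algorithm. Establishing a clean invariant that survives a single bump --- ideally phrased so that the inductive step is immediate --- is where the real work lies; the bijectivity and the $0$-Hecke claim then follow by the now-standard template once that invariant and the reversibility of each local move are in hand.
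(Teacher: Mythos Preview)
Your plan is correct and matches the paper's approach: explicit row insertion (Definition~\ref{d:row-insertion}), well-definedness via a forbidden-configuration case analysis (Lemma~\ref{lem:config} and Proposition~\ref{p:row-insertion}), $0$-Hecke invariance via local relation checks (Proposition~\ref{p:reading-word}), and bijectivity via an explicit inverse row insertion (Definition~\ref{d:inverse-row-insertion}, Theorem~\ref{t:invertible}, Corollary~\ref{c:bijection}). The one wrinkle you may underestimate is that the insertion rule into a row $R$ must already consult the row $S$ below it, since the SDT condition~(\ref{sdt.c}) couples adjacent rows through an interval of the reading word; this two-row dependence is what drives the proliferation of sub-cases in both the forward and inverse algorithms.
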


The definition of Kra\'skiewicz--Hecke insertion and proof of Theorem~\ref{t:bijection} appear in Section~\ref{s:insertion} and is extraordinarily technical.
As opposed to most families of tableau, checking the strict decomposition tableau column condition for two entries requires examining the intermediate segment of the tableau's reading word.
As a consequence, the insertion rules can modify entries in two rows, with concomitant difficulties propagating throughout the arguments in our proofs.

For $w$ a signed permutation, let $\mathcal{H}_n(w)$ be the set of $0$--Hecke expressions for $w$ of length $n$.
Note $\mathcal{H}_{\ell(w)}$ is the set of reduced words $w$.
Also, let $a^C_w(\lambda)$ be the number of strict decomposition tableaux of shape $\lambda$ whose reading word is a 0--Hecke expression for $w$.
As an immediate consequence of Theorem~\ref{t:bijection}, we have:

\begin{corollary}
	\label{c:0-hecke-count}
	For $w$ a signed permutation and $n \in \mathbb{N}$, we have 
	\begin{equation}
	|\mathcal{H}_n(w)| =  \sum_{\lambda \vdash m \leq n\ \mathrm{strict}} a^C_w(\lambda) \cdot |\ShSet_n(\lambda) |.
\label{eq:0-hecke-count}		
	\end{equation}
	\end{corollary}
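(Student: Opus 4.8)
The plan is to derive Corollary~\ref{c:0-hecke-count} directly from Theorem~\ref{t:bijection} by restricting the bijection to the fiber over a fixed signed permutation $w$. First I would observe that the set $\mathcal{H}_n(w)$ is precisely the set of those $(a_1,\dots,a_p) \in \mathbb{N}^n$ (note $p = n$ here since all words considered have length $n$) that are $0$--Hecke expressions for $w$. By the ``Moreover'' clause of Theorem~\ref{t:bijection}, for $KH(a_1,\dots,a_n) = (P,Q)$ the words $(a_1,\dots,a_n)$ and $\rho(P)$ are $0$--Hecke expressions for the \emph{same} signed permutation. Hence $(a_1,\dots,a_n) \in \mathcal{H}_n(w)$ if and only if $\rho(P)$ is a $0$--Hecke expression for $w$; crucially this condition depends only on $P$ and not on the recording tableau $Q$.

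Next I would intersect both sides of the bijection of Theorem~\ref{t:bijection} with this $w$--fiber. On the left, the preimage of $\mathcal{H}_n(w)$ has cardinality $|\mathcal{H}_n(w)|$. On the right, since the condition ``$\rho(P)$ is a $0$--Hecke expression for $w$'' cuts out exactly the subset of pairs $(P,Q)$ whose first coordinate $P$ lies in
\[
  \SDT_w(\lambda) := \{ P \in \SDT(\lambda) : \rho(P) \in \mathcal{H}_n(w)\},
\]
the restricted target is $\bigsqcup_{\lambda \vdash m \leq n\ \mathrm{strict}} \SDT_w(\lambda) \times \ShSet_n(\lambda)$. By definition $|\SDT_w(\lambda)| = a^C_w(\lambda)$, the number of strict decomposition tableaux of shape $\lambda$ whose reading word is a $0$--Hecke expression for $w$. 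Since $KH$ is a bijection, it restricts to a bijection between these two restricted sets, and counting cardinalities term by term over the (finite) disjoint union gives
\[
  |\mathcal{H}_n(w)| = \sum_{\lambda \vdash m \leq n\ \mathrm{strict}} a^C_w(\lambda)\cdot |\ShSet_n(\lambda)|,
\]
which is~\eqref{eq:0-hecke-count}.

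There is essentially no obstacle here: the only point requiring a moment's care is that the defining property of $\mathcal{H}_n(w)$ (being a $0$--Hecke expression for $w$) is, via the ``Moreover'' clause, equivalent to a condition on $P$ alone, so that the restriction of the product set is again a product set. Once that is noted, the statement is an immediate bookkeeping consequence of Theorem~\ref{t:bijection}, which is why it is phrased as a corollary rather than proved from scratch. One might also remark that the special case $n = \ell(w)$ recovers the Kra\'skiewicz enumeration of reduced words for signed permutations by standard shifted tableaux, since $\ShSet_{\ell(w)}(\lambda)$ is then the set of standard shifted (ordinary) tableaux of shape $\lambda$ and $a^C_w(\lambda)$ reduces to the corresponding decomposition-tableau count.
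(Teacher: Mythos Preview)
Your argument is correct and is exactly the ``immediate consequence of Theorem~\ref{t:bijection}'' the paper has in mind: restrict the bijection to the fiber over a fixed $w$ using the reading-word clause, observe the condition depends only on $P$, and count. One small notational slip: in your definition of $\SDT_w(\lambda)$ you should write $\rho(P)\in\HH(w)$ (or $\HH_{|\lambda|}(w)$) rather than $\HH_n(w)$, since $\rho(P)$ has length $|\lambda|\leq n$; with that fix your $|\SDT_w(\lambda)|$ is indeed $a^C_w(\lambda)$ as stated.
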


Corollary~\ref{c:0-hecke-count} is precisely analogous to the use of Edelman--Greene and Kra\'skiewicz insertions for reduced word enumeration and Hecke insertion for 0-Hecke expression enumeration in Type A.
All three of these insertions are enumerative shadows of ($K$--)Stanley symmetric function expansions.
Unfortunately, as shown in Section~\ref{ss:no-insertion} an insertion algorithm extending Kra\'skiewicz insertion \textbf{cannot} be used to directly compute the analogous expansion for $G^C_w$.
That said, we believe Kra\'skiewicz--Hecke insertion is correctly identifying coefficients.
For $\lambda$ a strict partition, $GQ_\lambda$ is a symmetric function defined in terms of shifted set-valued tableaux and represents $K$--theory classes for the Lagrangian Grassmannian~\cite{ikeda2013k}.

\begin{conjecture}
	\label{conj:expansion}
	For $w$ a signed permutation,
	\begin{equation}
	G^C_w = \sum_{\lambda \ \mathrm{strict}} \beta^{|\lambda| - \ell(w)}a^C_w(\lambda) \cdot GQ_\lambda.
\label{eq:GC-conj}		
	\end{equation}
\end{conjecture}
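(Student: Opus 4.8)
The plan is to prove the identity at the level of $K$-theoretic peak quasisymmetric expansions and then transport it through Kra\'skiewicz--Hecke insertion. First I would record a compatible-sequence expansion of the Type C $K$-Stanley function: from the construction in~\cite{kirillov2017construction} one expects
\[
G^C_w \;=\; \sum_{n \ge \ell(w)}\ \sum_{a \in \mathcal{H}_n(w)} \beta^{\,n-\ell(w)}\,\mathcal{L}_{\Peak(a)},
\]
where $\mathcal{L}_{\Peak(a)}$ is the $K$-theoretic analogue of Stembridge's peak quasisymmetric function determined by the peak data of the $0$--Hecke word $a$. In parallel, grouping the shifted set-valued tableaux in the definition of $GQ_\lambda$ from~\cite{ikeda2013k} by their standardizations should give
\[
GQ_\lambda \;=\; \sum_{n \ge |\lambda|}\ \sum_{Q \in \ShSet_n(\lambda)} \beta^{\,n-|\lambda|}\,\mathcal{L}_{\Peak(Q)}.
\]
Granting both expansions, Conjecture~\ref{conj:expansion} follows once one shows, for every $n$,
\[
\sum_{a \in \mathcal{H}_n(w)} \mathcal{L}_{\Peak(a)} \;=\; \sum_{\lambda}\, a^C_w(\lambda)\! \sum_{Q \in \ShSet_n(\lambda)}\! \mathcal{L}_{\Peak(Q)} ,
\]
and by Theorem~\ref{t:bijection} the right-hand side equals $\sum_{a \in \mathcal{H}_n(w)} \mathcal{L}_{\Peak(Q_a)}$, writing $KH(a)=(P_a,Q_a)$. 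So it reduces to $\sum_{a} \mathcal{L}_{\Peak(a)} = \sum_{a} \mathcal{L}_{\Peak(Q_a)}$ over $a \in \mathcal{H}_n(w)$.

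The subtlety --- and the reason this remains a conjecture --- is that $\Peak(a) \ne \Peak(Q_a)$ in general, which is the content of Section~\ref{ss:no-insertion} and which defeats the naive ``insertion preserves descents'' argument available in Type A and for classical Kra\'skiewicz insertion. The hope is that the discrepancies cancel inside the quasisymmetric ring: unlike in cohomology, the $K$-theoretic peak functions $\mathcal{L}_\alpha$ satisfy nontrivial linear relations, so a term-by-term failure of peak-preservation need not obstruct equality of the sums. Concretely, I would fix a strict decomposition tableau $P$ of shape $\lambda$ and aim to prove the sharper identity
\[
\sum_{a\,:\,P_a = P} \beta^{\,|a|-|\lambda|}\,\mathcal{L}_{\Peak(a)} \;=\; GQ_\lambda ;
\]
since $a \mapsto Q_a$ is a bijection from $\{a : P_a = P\}$ onto the standard shifted set-valued tableaux of shape $\lambda$ (by Theorem~\ref{t:bijection}), this says exactly that these two families have equal peak-generating functions even though the underlying peak sets do not match. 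Summing the displayed identity over the $a^C_w(\lambda)$ tableaux $P$ whose reading word $\rho(P)$ is a $0$--Hecke expression for $w$, and over $\lambda$, yields~\eqref{eq:GC-conj}.

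I expect proving that sharper identity to be the main obstacle: it amounts to a ``peak straightening law'' for the functions $\mathcal{L}_\alpha$ compatible with Kra\'skiewicz--Hecke bumps, and these bumps can alter entries in two rows at once --- already the principal source of technical difficulty in Theorem~\ref{t:bijection} --- so one must make the intricate local behaviour of the insertion interact coherently with a nontrivial family of relations in the target ring.

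Two things would help anchor the induction. The $\beta=0$ truncation of Conjecture~\ref{conj:expansion} is the assertion $F^C_w = \sum_{|\lambda|=\ell(w)} a^C_w(\lambda)\,Q_\lambda$, which is Lam's theorem~\cite{lam1995b}, since for $|\lambda|=\ell(w)$ the tableaux counted by $a^C_w(\lambda)$ are precisely the Kra\'skiewicz decomposition tableaux; one would then induct upward in $\beta$-degree. A complementary route is through transition equations: establish a $K$-theoretic Type C transition writing $G^C_w$ in terms of $G^C_{w'}$ for shorter $w'$ together with an extra $\beta$-weighted term (as in $K$-theoretic Lascoux transitions), reduce to the Grassmannian base case $G^C_w = GQ_{\lambda(w)}$, and compare against a companion recurrence on the right-hand side supplied by the Buch--Ravikumar Pieri rule~\cite{buch2012pieri}; what remains is then a bijection among strict decomposition tableaux with $0$--Hecke reading words realizing the transition, obstructed once more by the two-row moves.
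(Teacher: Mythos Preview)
The paper does not prove this statement: it is explicitly left as an open conjecture, and the authors state outright that they ``cannot establish Conjecture~\ref{conj:expansion}.'' So there is no proof in the paper to compare against. Your proposal is not a proof either---it is an outline of two strategies (peak-quasisymmetric expansion plus cancellation, and $K$-theoretic transition), each with its central step left open and correctly flagged as the obstruction.

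On the first strategy: the paper's concluding remarks (Section~\ref{s:final}) carry out exactly the reduction you describe, using the multipeak functions $K^{(\beta)}_I$ and the known expansion of $GQ_\lambda$ into them, and then note that the required identity $\sum_a K^{(\beta)}_{\Peak(a)} = \sum_a K^{(\beta)}_{\Peak(Q_a)}$ fails term-by-term for Kra\'skiewicz--Hecke insertion. Your hope that relations among the $K^{(\beta)}_I$ rescue the sum is plausible but unsupported: you would need to exhibit those relations and match them to the specific peak-set discrepancies produced by the two-row bumps, and neither you nor the paper does this. The explicit example in Section~\ref{ss:no-insertion} shows the failure is not local to a single insertion step, so any ``peak straightening law'' would have to be global over the fiber $\{a : P_a = P\}$; you have not indicated how to organize that.

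On the second strategy: a Type C $K$-theoretic transition formula of the kind you invoke is not in the literature, and the paper does not claim one. Even granting it, the companion recurrence on the right-hand side would need more than the Buch--Ravikumar Pieri rule, since transition in Type C involves reflections that do not correspond to multiplying by a single-row $GQ$. So both routes, as written, have a genuine missing idea at their core, which is consistent with the statement's status as a conjecture.
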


By combining~\cite[Lemma 7 (1)]{kirillov2017construction} with the main result from~\cite{chiu2023expanding}, it is known that $G^C_w \in \Z[\beta][GQ_\lambda: \lambda \ \mathrm{strict}]$.
However, establishing the positivity implied by Conjecture~\ref{conj:expansion} is an open problem, even using geometric methods.
By taking the coefficient of $x_1\dots x_n$ on both sides of~\eqref{eq:GC-conj}, we obtain Corollary~\ref{c:0-hecke-count} as a consequence of Conjecture~\ref{conj:expansion}.
Assuming the conjecture is false, replacing each $a^C_w(\lambda)$ with the correct coefficient of $GQ_\lambda$ for $G^C_w$ in~\eqref{eq:0-hecke-count} gives a true equation.
Therefore, should our conjecture fail we will have instead identified a striking collection of relations for standard shifted set-valued tableaux counts.

While we cannot establish Conjecture~\ref{conj:expansion}, we prove two important expansions of $GC$'s into $GQ$'s.
First, as a straightforward application of the Pfaffian formula for Type C degeneracy loci from~\cite{anderson2019k}\footnote{See the current arXiv version for the corrected formula}, we show:
\begin{theorem}
\label{t:vexillary}
	For $w$ a vexillary signed permutation, there is a shifted shape $\lambda(w)$ so that $G^C_w = GQ_{\lambda(w)}$.
\end{theorem}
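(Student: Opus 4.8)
The plan is to realize $G^C_w$ for vexillary $w$ as a specialization of the Pfaffian formula of Anderson. First I would recall the relevant setup from~\cite{anderson2019k}: for a vexillary signed permutation, the associated Type C degeneracy locus has a $K$-theoretic class computable by a Pfaffian whose entries are the single-variable (or few-variable) Grothendieck-type classes determined by the essential set / shape data of $w$. The key translation step is the identification $G^C_w = \lim \mathfrak{G}^C_w$ of the Type C $K$-Stanley with the stable limit of the double (or single) Type C Grothendieck polynomial, so that the Pfaffian formula specializes in the stable range to a Pfaffian in the power-sum-like generators $q_i$ that generate the ring $\Z[\beta][GQ_\lambda]$.

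Next I would invoke the analogous Pfaffian presentation of $GQ_\lambda$ itself. Ikeda--Naruse show $GQ_\lambda$ admits a Pfaffian formula (a $K$-theoretic Nimmo-type formula) whose entries are exactly the two-row $GQ_{(p,q)}$'s, i.e. $GQ_\lambda = \mathrm{Pf}(GQ_{(\lambda_i,\lambda_j)})$ after suitable bordering when $\ell(\lambda)$ is odd. The strategy is then purely formal: show that Anderson's Pfaffian for a vexillary $w$, once written in the stable limit, has entries that match the $GQ_{(p,q)}$ entries for a specific strict shape $\lambda(w)$ read off from the shape of $w$ (the partition recording the lengths of the increasing runs in the essential data, just as in the classical vexillary case for Schur $Q$-functions where $F^C_w = Q_{\lambda(w)}$). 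Since both sides are Pfaffians of the same skew-symmetric matrix of two-variable classes, equality of the matrices forces equality of the Pfaffians, giving $G^C_w = GQ_{\lambda(w)}$.

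Concretely the steps are: (1) fix the combinatorial definition of vexillary for signed permutations and extract the shape $\lambda(w)$ (a strict partition) together with the flagging data; (2) write down Anderson's Pfaffian formula for $\mathfrak{G}^C_w$ and pass to the stable/infinite limit, checking that the flag conditions degenerate correctly so that the entries become honest symmetric functions; (3) identify each Pfaffian entry with the corresponding $GQ_{(p,q)}$ (and handle the odd-length bordering case); (4) cite the Ikeda--Naruse Pfaffian formula for $GQ_{\lambda(w)}$ and match the two Pfaffians entrywise; (5) conclude. The classical case ($\beta = 0$) of each of these steps is known --- it is exactly how one proves $F^C_w = Q_{\lambda(w)}$ for vexillary signed permutations via Pragacz's Pfaffian formula --- so the work is in verifying that the $K$-theoretic corrections on both sides agree.

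The main obstacle I anticipate is step (2)/(3): making precise the passage to the stable limit in Anderson's formula and confirming that the resulting two-variable entries are literally $GQ_{(\lambda_i,\lambda_j)}$ rather than some other $K$-class differing by lower-order $\beta$-corrections or by a unit. Anderson's formula is stated for double Grothendieck classes with both sets of variables, and there is a known subtlety (flagged in the footnote to~\cite{anderson2019k} about the corrected formula) in getting the entries exactly right; one must set the dual variables to zero carefully and verify the edge cases where a part of $\lambda(w)$ is zero or where the essential set produces a trivial row. I expect this to require a careful but not conceptually deep computation with the explicit two-row formulas, after which the Pfaffian-matching argument is automatic.
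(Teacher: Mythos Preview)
Your approach and the paper's both rest on Anderson's Pfaffian formula for the $K$-class of a vexillary Type C degeneracy locus, so the starting point is the same. The divergence is in how you conclude. You propose to identify each entry of the specialized Pfaffian as a two-row $GQ_{(\lambda_i,\lambda_j)}$ and then match against an Ikeda--Naruse Pfaffian presentation of $GQ_{\lambda(w)}$. The paper does something slicker: it simply observes that after setting the extra $y$ and $z$ variables to zero (using~\cite[Prop.~2]{kirillov2017construction}), the entries of Anderson's Pfaffian depend only on the shape $\lambda(w)$ and not on the particular vexillary $w$. Hence $G^C_w = G^C_{w'}$ whenever $\lambda(w) = \lambda(w')$, and in particular $G^C_w = G^C_{w(\lambda(w))}$ for the Grassmannian permutation of that shape. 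Since the Grassmannian case $G^C_{w(\lambda)} = GQ_\lambda$ is already known (from~\cite{kirillov2017construction} or as a special case of Theorem~\ref{t:full-commutative}), the result follows.

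This sidesteps exactly the obstacle you flagged: you never have to verify that the two-row entries are literally $GQ_{(\lambda_i,\lambda_j)}$, nor do you need a separate Pfaffian formula for $GQ_\lambda$. Your route would work in principle, but it requires two extra ingredients (the entrywise identification and the Ikeda--Naruse Pfaffian) that the paper's ``reduce to the Grassmannian case'' argument makes unnecessary. The paper's argument is essentially: Anderson's formula is uniform across vexillary $w$ of a given shape, and one member of each shape-class is already handled.
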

Using the identification between reduced words and heaps for certain fully commutative elements~\cite{stembridge1996fully,stembridge97} (a similar and more general construction appears in~\cite{tamvakis2023tableau}), we also show:
\begin{theorem}
	\label{t:full-commutative}
	For $\nu/\mu$ a skew shifted shape, there is a signed permutation $w$ with $G^C_w = GQ_{\nu/\mu}$.
\end{theorem}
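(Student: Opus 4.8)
The plan is to realize the skew Schur $Q$-function analogue $GQ_{\nu/\mu}$ as a Type C $K$-Stanley symmetric function by building a suitable fully commutative signed permutation whose heap is (essentially) the shifted skew diagram $\nu/\mu$. First I would recall the heap construction of Stembridge for fully commutative elements of type $C_n$: for such an element $w$, the reduced words are in bijection with linear extensions of a labelled poset (the heap), and the Type C Stanley symmetric function $F^C_w$ is the sum over such linear extensions of the corresponding fundamental quasisymmetric functions. The key structural input is that for the right choice of fully commutative $w$, this heap is order-isomorphic (with the right labelling) to the poset underlying a shifted skew shape $\nu/\mu$, so that $0$-Hecke expressions for $w$ correspond exactly to the objects counted by the shifted set-valued / primed tableaux that define $GQ_{\nu/\mu}$.

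The key steps, in order, are: (1) For a given shifted skew shape $\nu/\mu$, construct an explicit signed permutation $w = w(\nu/\mu)$; the natural candidate is to place a simple reflection $s_{j-i+c}$ at each box $(i,j)$ of $\nu/\mu$ (with a shift so that boxes on the main diagonal get $s_0$, the "barred" generator), and take $w$ to be the product of these generators read along the heap order. (2) Verify $w$ is fully commutative in type $C_n$ using Stembridge's criterion (no braid-movable occurrences of patterns like $s_0 s_1 s_0$ or $s_i s_{i\pm1} s_i$), which follows because the shifted skew shape never stacks three boxes in a way that would force a length-3 braid on the diagonal or elsewhere — here one must be careful that the $s_0 s_1 s_0 s_1$ relation of type $C$ is respected by the shape near the diagonal. (3) Identify the heap of $w$ with $\nu/\mu$ as a labelled poset, so that linear extensions of the heap $\leftrightarrow$ standard shifted tableaux of shape $\nu/\mu$ and, more generally, $0$-Hecke expressions of length $n$ for $w$ $\leftrightarrow$ the increasing/set-valued fillings that appear in the tableau definition of $GQ_{\nu/\mu}$. (4) Conclude that the generating function $G^C_w$, which by its definition sums $\beta$-weighted monomials over $0$-Hecke expressions grouped by descent composition, equals the tableau generating function for $GQ_{\nu/\mu}$; this is a matter of matching the descent set of a heap word with the descent set of the corresponding tableau reading word, exactly as in the Schur $Q$ (reduced-word) case treated in~\cite{billey1995schubert,lam1995b}, and then invoking that the $K$-theoretic ($\beta$-)weighting on both sides is governed by the same "repeated letter / idempotent" bookkeeping.

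The main obstacle I expect is step (3)–(4) in the $K$-theoretic regime: matching $0$-Hecke (rather than merely reduced) expressions for $w$ with set-valued shifted tableaux. For reduced words the heap-to-tableau dictionary is classical, but once we allow the braidless $0$-Hecke relations $s_i \cdot s_i = s_i$ the combinatorics of "how a repeated generator sits in the heap" must be shown to correspond precisely to placing two entries in one box (or on a primed/unprimed pair) of a shifted set-valued tableau, with the right $\beta$-exponent $|\lambda|-\ell(w)$ bookkeeping; the subtlety is that in type $C$ the diagonal generator $s_0$ behaves differently from the others, and one must check the $GQ_{\nu/\mu}$ tableau rules (which treat primed entries on the diagonal specially) are exactly reproduced. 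A secondary obstacle is ensuring the construction in step (1) genuinely lands inside the signed permutation group (i.e.\ the assigned generators, with their multiplicities forced by the skew shape, actually multiply to a well-defined $w \in \fS^C$ independent of the chosen linear extension) — but this is guaranteed once full commutativity in step (2) is established, so the real work is concentrated in the $K$-theoretic tableau correspondence. Once these are in hand, Theorem~\ref{t:full-commutative} follows immediately, and combined with Theorem~\ref{t:vexillary} it yields the rectangle-to-trapezoid identity promised in the abstract.
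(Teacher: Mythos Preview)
Your proposal is correct and follows essentially the same route as the paper: the paper constructs the top fully commutative element $w_{(\nu/\mu)}$ via the content labeling $C_{ij}=j-i$ (your step~(1)--(2), citing Stembridge for the heap identification), and then proves (Theorem~\ref{t:res}) that the explicit \emph{residue map} $\res$ sending a shifted set-valued semistandard tableau to a unimodal factorization is a weight-preserving bijection $\ShSSYT(\nu/\mu)\to\UU(w_{(\nu/\mu)})$, which is exactly your step~(3)--(4). The one place where the paper is more concrete than your outline is in handling the $0$--Hecke (non-reduced) case: rather than reasoning via descent sets, it works directly with unimodal factorizations and uses Lemma~\ref{l:top-word} (for a top element, an idempotent cancellation $s_a\circ s_a=s_a$ occurs precisely when no $a\pm1$ intervenes) to show that extra entries in a set-valued cell correspond exactly to the repeated letters in a Hecke word---this is the ingredient that resolves the obstacle you flagged.
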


Many properties of $GQ$'s that are known for other families of $K$--theoretic symmetric functions remain open.
For example, there is no combinatorial rule for the product $GQ_\lambda \cdot GQ_\mu$.
By picking appropriate skew shapes, Theorem~\ref{t:full-commutative} and Conjecture~\ref{conj:expansion} combine to give conjectural descriptions for several $GQ$ expansions of interest.
This includes Conjecture 5.14 from~\cite{lewis2021enriched} and Conjecture 4.36 from~\cite{marberg2022shifted}, which state that a skew $GQ$ lies in $\Z[\beta][GQ_\lambda: \lambda \text{ strict}]$.
See Conjecture 74 from~\cite{hawkes2024combinatorics} for a distinct conjectural expansion.

Second, we give a conjectural rule for certain $GQ$ products, the first progress on this problem since Buch and Ravikumar's Pieri rule for computing $GQ_\lambda \cdot GQ_{(r)}$~\cite{buch2012pieri}.
For $a < b$, let 
\[\tau(a,b) = (b{+}a{-}1,b{+}a{-}3,\dots,b{-}a{+}1).
\]
Using Theorem~\ref{t:full-commutative}, we prove Conjecture 4.23 from~\cite{marberg2022shifted}, which allows us to show the product $GQ_\lambda \cdot GQ_{\tau(a,b)}$ is $GC_w$ for an appropriate signed permutation $w$.
Then Conjecture~\ref{conj:expansion} specializes to a combinatorial rule for such products.
Note $\tau(1,b) =(b)$ is a single row, so this would generalize the Buch--Ravikumar Pieri rule.
In forthcoming work, the first author reproves the Pieri case using strict decomposition tableaux.

\medskip

\subsection*{Paper structure}
Section~\ref{s:background} introduces necessary background material on signed permutations, set-valued tableaux and $GQ$ functions.
In Section~\ref{s:GC}, we give a precise definition for Type C $K$--Stanley symmetric functions and demonstrate their features required for our conjectures, including Theorems~\ref{t:vexillary} and~\ref{t:full-commutative}.
We introduce strict decomposition tableaux and prove several of their basic properties in Section~\ref{ss:strict-tableaux}.
The most technical material in our work is Section~\ref{s:insertion}, where we introduce Kra\'skiewicz--Hecke insertion and prove its properties leading to Theorem~\ref{t:bijection}.
We discuss Conjecture~\ref{conj:expansion} and its applications including Conjecture~\ref{conj:trapezoid} in greater detail in Section~\ref{s:conjectures} before concluding with final remarks and further directions in Section~\ref{s:final}.

\bigskip

\section{Background}\label{s:background}
\smallskip
For $n$ a positive integer, let $\overline{n} = -n$, $[n] = \{1,2,\dots,n\}$ and $[\overline{n}] = \{\overline{1},\overline{2},\dots,\overline{n}\}$.
Let $\Z_-$ be the set of negative integers and $\Z_+$ be the set of positive integers.
Define $\prec$ as the total order on $\Z - \{0\}$ where $\overline{1} \prec 1 \prec \overline{2} \prec 2 \prec \dots$.

A \defn{strict partition} is a descending sequence $\lambda = (\lambda_1 > \lambda_2 > \dots > \lambda_k)$ of positive integers.
Here, the \defn{size} of $\lambda$ is $|\lambda| = \sum_{i = 1}^k \lambda_i$, and the \defn{length} of $\lambda$ is $\ell(\lambda) = k$.
The \defn{shifted Young Diagram} of $\lambda$ is the set
\[
D_\lambda = \{(i,j) \in \Z_+^2: i \leq j \leq \lambda_i + i\}.
\]
See Figure~\ref{fig:tableaux} for an example.
We say the strict partition $\mu$ is \defn{contained} in $\lambda$, denoted $\mu \subseteq \lambda$, if $\mu_i \leq \lambda_i$ for all $i \in [\ell(\mu)]$, or equivalently if $D_\mu \subseteq D_\lambda$.
Here, we refer to $\lambda/\mu$ as a \defn{skew shape}, whose associated diagram is $D_{\lambda/\mu} = D_\lambda \setminus D_\mu$.

\subsection{Signed Permutations}
\label{ss:signed-perms}
A \defn{signed permutation} $w$ is a permutation of the elements $[\overline{n}]\cup[n]$ such that $w(i)=-w(\overline{i})$.
For example, $v = \overline{1}\overline{3}2\overline{2}31$ is a signed permutation.
By antisymmetry, $w$ is determined by $w([n])$.
When writing a signed permutation, we frequently omit $w([\overline{n}])$; for instance $v = \overline{2}31$.
Signed permutations with composition form a group $W_n$, which is the Coxeter group of Type B/C.
Viewed as a Coxeter group, the generators of $W_n$ are $s_0, s_1, \ldots s_{n-1}$ where $s_0 = (\overline{1},1)$ swaps positions $\overline{1}$ and $1$ and $s_i = (\overline{i{+}1},\overline{i})(i,i{+}1)$ simultaneously swaps positions $i$ and $i+1$ and positions $\overline{i}$ and $\overline{i+1}$ for $i>0$.
These generators satisfy the relations
\begin{enumerate}
    \item Self-inverse: $s_i^2 = 1$ for $i =0,1,\dots,n-1$;
    \item Commutation: $s_i s_j = s_i s_j$ if $|i-j| \geq 2$;
    \item Braid Relation: $s_i s_{i+1} s_i = s_{i+1} s_i s_{i+1}$ if $i > 0$;
    \item Long Braid Relation: $s_1 s_0 s_1 s_0 = s_0 s_1 s_0 s_1$.
\end{enumerate}

The \defn{length} of $w \in W_n$ is the minimum number of generators needed to express $w$, denoted $\ell(w)$.
A \defn{reduced expression} for $w$ is an expression of the form
\[
w = s_{a_1} \cdot s_{a_2} \cdot \ldots \cdot s_{a_p}
\]
where $p = \ell(w)$.
The associated word $(a_1,\dots,a_p)$ is a \defn{reduced word} for $w$.
The Matsumoto--Tits theorem says that the set of reduced words for a given $w$ is connected by the commutation, braid and long braid relations.
For example, the reduced words of $w = \overline{2}31$ are $(1,0,2)$ and $(1,2,0)$.

The \defn{0--Hecke monoid} $(W_n,\circ)$ is the monoid on signed permutations obtained by replacing the Self-inverse relation with the Idempotent Relation $s_i \circ s_i = s_i$.
A \defn{0--Hecke expression} for $w \in W_n$ is an expression of the form
\[
w = s_{a_1} \circ s_{a_2} \circ \ldots \circ s_{a_p}
\]
with associated \defn{Hecke word} $(a_1,a_2,\dots,a_p)$.
Let $\HH_p(w)$ be the set of Hecke words for $w$ with $p$ letters and $\HH(w) = \cup_{p \geq 0} \HH_p(w)$.
For example, with $w = \overline{2}31$ we have $\HH_3(w) = \{(1,0,2),(1,2,0)\}$ and
\[
\HH_4(w) = \{(1,0,0,2),(1,0,2,0),(1,0,2,2),(1,1,0,2),(1,2,0,0),(1,2,0,2),(1,2,2,0)\}.
\]
Note $\HH_{\ell(w)}(w)$ is the set of reduced words for $w$ and $\HH_m(w)$ is empty when $m < \ell(w)$.
The \defn{peak set} of a word $(a_1,\dots,a_p)$ is $\Peak((a_1,\dots,a_p)) = \{i \in [p]: a_{i-1} < a_i > a_{i+1}\}$.
By construction $1,p \notin \Peak((a_1,\dots,a_p))$.\textbf{}

We now introduce several important families of signed permutations.
A signed permutation $w$ is \defn{Grassmannian} if $w(i) < w(i+1)$ for all $i\in [n-1]$, that is $w_1, \dots, w_n$ is an increasing sequence.
Each Grassmannian signed permutation is associated with a strict partition, $\lambda$, defined by $\lambda_i = \overline{w(i)}$ for each $i$ such that $w(i) < 0$.
For example, $w = \overline{4}\overline{1}23$ is a Grassmannian signed permutation associated with $\lambda = (4,1)$.
Note the identity corresponds to the empty partition.

Let $w\in W_n$ and $v\in V_m$, $w$ contains $v$ as a \defn{pattern} if there exists a subsequence of $w$ of length $m$, $w'$, such that for all $i,j\in [m]$, $w'(i)$ and $v(i)$ have the same sign and $|w'(i)|< |w'(j)|$ if and only if $|v(i)| < |v(j)|$. If $w$ does not contain $v$ as a pattern, then $w$ \defn{avoids} $v$. A signed permutation, $w$, is \defn{vexillary} if $w$ avoids the following $18$ patterns \cite{billey1998vexillary}:
\[
\begin{matrix}
 \overline{1}32 &  \overline{2}31 & 3\overline{1}2 & 321 & \overline{3}21 & 32\overline{1} & \overline{3}2\overline{1}\\
 2143 & 2\overline{3}4\overline{1} & \overline{2}\overline{3}4\overline{1} & 2413 & 3142 & 3\overline{4}1\overline{2} & \overline{3}\overline{4}1\overline{2}\\ 3\overline{4}\overline{1}\overline{2} & \overline{3}\overline{4}\overline{1}\overline{2} & \overline{4}1\overline{2}3 & \overline{4}\overline{1}\overline{2}3
\end{matrix}
\]
Each vexillary permutation has an associated shifted shape $\lambda(w)$, which we define implicitly in Section~\ref{ss:symmetric-functions}, see \cite{billey1998vexillary} for an algorithmic construction of $\lambda(w)$.

\subsection{Shifted Tableaux}
\label{ss:tableaux}
For $\lambda$, $\mu$ strict partitions with $\mu \subseteq \lambda$, a \defn{shifted tableau} of \defn{shape} $\lambda/\mu$ is a function $T$ whose domain is $D_{\lambda/\mu}$.
Here, $T_{ij} = T((i,j))$ is called the \defn{entry} in the cell $(i,j)$.
For a shifted tableau $T$, let $T_i$ be the entries in the $i$\textsuperscript{th} row of $T$, read from left to right.
The \defn{reading word} of $T$ with $\ell$ rows is $\rho(T)=T_{\ell} T_{\ell-1}\ldots T_1$.
A \defn{shifted standard Young tableau} is a shifted tableau filled bijectively by $[n]$ so that each row is strictly increasing rightwards and each column is strictly increasing downwards.
Viewing $D_{\lambda/\mu}$ as a poset with $(i,j) \leq (k,\ell)$ if $i \leq k$ and $j \leq \ell$, note shifted standard Young tableaux of shape $\lambda/\mu$ are in bijection with linear extensions of $D_{\lambda/\mu}$.

A \defn{set-valued tableau} $T$ has entries that are finite sets of integers, that is $T_{ij} \subseteq \Z$.
Similarly, a \defn{shifted set valued standard Young tableau} $T$ of \defn{size} $|T| = n$ is a shifted diagram filled with sets that partition $[n]$ such that for all cells
\[
\max T_{ij} \leq \min T_{i+1\,j}, \min T_{i\,j+1}
\]
(assuming such cells exist).
Let $\ShSet(\lambda)$ be the set of shifted set valued standard Young tableaux of shape $\lambda$ and $\ShSet_n(\lambda)$ be the subset whose entries partition $[n]$.
Note for $\lambda\vdash n$ that $\ShSet_n(\lambda)$ is the set of shifted standard Young tableaux of shape $\lambda$.
The \defn{peak set} of a shifted set-valued standard Young tableau $T$ is $\Peak(T)$, the set of values $i$ such that $i-1$ is in a column to the left of $i$ and $i+1$ is is in a row below $i$.
Note for any tableau $T$ that $i \in \Peak(T)$ implies $i-1, i+1 \notin \Peak(T)$.
See Figure~\ref{fig:tableaux} for examples.

A \defn{shifted set valued semistandard Young tableau} is a set valued tableau with entries in $\Z - \{0\}$ so that $\max(T_{ij}) \preceq \min(T_{i{+}1\,j})$ with equality only for negative values and $\max(T_{ij}) \preceq \min(T_{i\,j{+}1})$ with equality only for positive values.
For $T$ a set valued tableau, define
\[
x^T = \prod_{(i,j) \in \lambda(T)} x^{T_{ij}} \quad \mbox{where} \quad x^{S} = x_{|s_1|} \dots x_{|s_k|} \quad \mbox{for} \quad S = \{s_1,\dots,s_k\} \subseteq \Z.
\]
Let $\ShSet^*(\lambda/\mu)$ be the set of shifted set valued semistandard Young tableaux of shape $\lambda/\mu$.
Then
\begin{equation}
\label{eq:GQ}
GQ^{(\beta)}_{\lambda/\mu} = \sum_{T\in \ShSet^*(\lambda/\mu)} \beta^{|T| - |\lambda/\mu|} x^T.    
\end{equation}
We write $GQ^{(\beta)}_\lambda := GQ^{(\beta)}_{\lambda/\varnothing}$.
For $\mu = \varnothing$, the $GQ$'s were first introduced in~\cite{ikeda2013k} (see also~\cite{graham2015excited}) as $K$--theory representatives for Schubert classes in the Lagrangian Grassmannian.
The lowest degree homogeneous component of $GQ_\lambda$ is the \defn{Schur $Q$-function} $Q_\lambda = GQ_\lambda^{(0)}$.
While it is not obvious from the above definition, the $GQ^{(\beta)}_\lambda$'s are symmetric functions in the $x$--variables.

Let $\lambda$ be a strict partition with $\ell(\lambda) = k$ and $\mu = (k-1,\dots,1)$.
Then the shifted diagram of $\lambda/\mu$ is the ordinary diagram of the (non-strict) partition $\nu = (\lambda_1 - (k{-1}), \dots \lambda_k)$.
An important special case of skew $GQ$ functions are the $GS$ functions defined by $GS_{\nu} = GQ_{\lambda/\mu}$.
These are generating functions for unshifted marked set-valued tableaux.

\begin{figure}[h!]
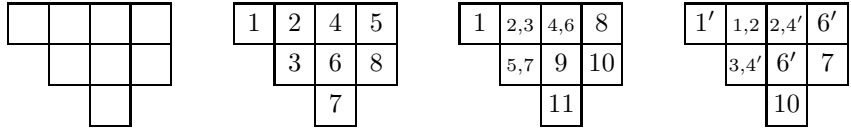

    \centering
    \begin{ytableau}
        \; & \; & \; & \;\\
         \none & \; & \; & \;\\
         \none & \none & \;
    \end{ytableau} \quad \quad
    \begin{ytableau}
        1 & 2 & 4 & 5 \\
         \none & 3 & 6 & 8\\
         \none & \none & 7
    \end{ytableau}
    \quad \quad
    \begin{ytableau}
        1 & {\scriptstyle 2,3} & {\scriptstyle 4,6} & 8 \\
         \none & {\scriptstyle 5,7} & 9 & 10\\
         \none & \none & 11
    \end{ytableau}
    \quad \quad
    \begin{ytableau}
        1' & {\scriptstyle 1,2} & {\scriptstyle 2,4'} & 6'\\
        \none & {\scriptstyle 3,4'} & 6' & 7\\
        \none & \none & 10
    \end{ytableau}
    
    \caption{To the left is a shifted Young diagram of shape $\lambda=(4,3,1)$. Next is a standard shifted Young tableau $T_1$ of the same shape with peak set $\Peak(T_1) = \{2,5\}$, followed by a standard shifted set valued Young tableau $T_2$ of the same shape with $\Peak(T_2) = \{4,6,8,10\}$.
    Last is a semistandard shifted set valued Young tableau $T_3$ so that $\st(T_3) = T_2$. 
    \label{fig:tableaux}}
\end{figure}

Given a set-valued semistandard tableau $T$, we construct the \defn{standardization} of $\st(T)$ iteratively as follows.
First assume $T$ has only entries $\overline{a}$ and $a$, with $|T| = k$ and $a \notin [k]$.
Beginning as high as possible, replace each $\overline{1}$ with the smallest value of $[k]$ not in $T$.
After all such values have been replaced, beginning as low as possible replace each $1$ with the smallest value of $[k]$ not in $T$.
The resulting tableau is standard.
For a set-valued semistandard tableau whose largest entry is $m$ or $\overline{m}$, we can repeat this procedure for each value in $[m]$ to construct a standard tableau of the same size.

\section{Type $C$ $K$--Stanley symmetric functions}
\label{s:GC}
In~\cite{kirillov2017construction}, the authors introduce Type $C$ $K$--Stanley symmetric functions $G^C_w$ as part of their construction of Type C double Grothendieck polynomials.
However, as ancillary objects in their work the $G^C_w$'s were not investigated for their own sake.
Since then these symmetric functions have not received further study, so we must establish several of their basic properties.

\subsection{Definition and basic properties}
\label{ss:symmetric-functions}
Let $\bfx = (x_1,x_2, \dots)$ be commuting variables and $u_1,\dots, u_{n-1}$ be non-commuting variables subject to the following $\beta$--Idempotent Relation and the Coxeter relations:
\begin{align*}\label{noncommutative}
&u_i^2 = \beta u_i, &u_1u_0u_1u_0=u_0u_1u_0u_1,\\
&u_iu_{i+1}u_i=u_{i+1}u_iu_{i+1}\  (i>0), &u_iu_j =u_ju_i\  (|i-j|>1).
\end{align*}
Define
\begin{equation}
    \label{eq:gc-def}
G^C_n(\bfx) = \prod_{i\geqslant 1}(1+x_iu_{n-1})(1+x_iu_{n-2})\dots(1+x_iu_1)(1+x_iu_0)(1+x_iu_0)(1+x_iu_1)\dots (1+x_iu_{n-1}),
\end{equation}
where the product is computed beginning with the terms containing $x_1$.
For $w\in W_n$, the \defn{type $C$ K-Stanley symmetric function} of $w$ is the coefficient of $u_w$ in $G^C_n(\bfx)$, which we denote $G^C_w$.
These symmetric functions (with $\beta = -1$) are introduced in~\cite{kirillov2017construction} in order to define Type C double Grothendieck polynomials and are stable under the natural inclusion $W_n \hookrightarrow W_{n+1}$.
Note that $G^C_w$ is not homogeneous; its lowest degree homogeneous component is the \defn{Type C Stanley symmetric function} $F^C_w = G^C_w \mid_{\beta =0}$.
It is not obvious, but $G^C_w$ is symmetric for each $w$.
In fact, it can be expressed in terms of $GQ$--functions.
\begin{proposition}
    \label{p:GC-GQ}
    For $w \in W_n$, $G^C_w \in \Z[\beta][GQ^{(\beta)}_\lambda:\lambda$ strict$]$.
\end{proposition}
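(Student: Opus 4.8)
The plan is to reduce the statement to a known membership result via two algebraic inputs cited in the excerpt. The reference~\cite{chiu2023expanding} establishes that (skew) $GQ$-functions lie in the polynomial ring $\Z[\beta][GQ^{(\beta)}_\lambda : \lambda \text{ strict}]$; more precisely, every ``stable limit'' type Type C symmetric function that expands into skew $GQ$'s does so. So it suffices to show that $G^C_w$ itself can be written as a finite $\Z[\beta]$-linear combination of such skew $GQ$'s (or of objects already known to sit in the ring). The intermediary is~\cite[Lemma 7 (1)]{kirillov2017construction}: there the $G^C_w$ are identified — as coefficients extracted from the generating series~\eqref{eq:gc-def} — with a combination of the $\GQ$-type building blocks used to define the Type C double Grothendieck polynomials. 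Combining these two inputs gives the membership directly.

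Concretely, the steps I would carry out are as follows. First, unwind the definition~\eqref{eq:gc-def}: expanding $G^C_n(\bfx) = \sum_{w \in W_n} G^C_w \, u_w$ shows each $G^C_w$ is a well-defined element of the ring of symmetric functions over $\Z[\beta]$ (symmetry being asserted in the excerpt and provable by the standard ``commute the $(1+x_i u_j)$ factors past each other'' argument, exactly as for ordinary Stanley symmetric functions; I would cite this rather than reprove it). Second, invoke~\cite[Lemma 7 (1)]{kirillov2017construction} to realize $G^C_w$ in terms of the $\GQ$-type pieces that appear in Kirillov--Naruse's construction — this is the place where the signed-permutation combinatorics enters. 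Third, apply the main theorem of~\cite{chiu2023expanding}, which says precisely that these pieces (skew $GQ$'s, or whatever normal form Lemma 7 produces) lie in $\Z[\beta][GQ^{(\beta)}_\lambda : \lambda \text{ strict}]$. Since the ring is closed under $\Z[\beta]$-linear combinations and products, the finite expression for $G^C_w$ obtained in step two lands in the ring.

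The main obstacle is bookkeeping at the interface between the two cited results: one must check that the normal form for $G^C_w$ coming out of~\cite[Lemma 7 (1)]{kirillov2017construction} is literally the kind of object to which~\cite{chiu2023expanding} applies — i.e.\ that degrees match, that the $\beta$-grading conventions agree (the excerpt uses the $\beta$-deformation, whereas Kirillov--Naruse work at $\beta = -1$, so a rescaling $x_i \mapsto x_i$, $\beta \mapsto -\beta$ must be tracked), and that the expansion is genuinely finite. A secondary point is confirming that the expressions produced are honestly symmetric functions and not merely formal power series, so that ``lies in $\Z[\beta][GQ^{(\beta)}_\lambda]$'' is a meaningful statement; this follows once symmetry of $G^C_w$ is in hand, which I would handle by the factor-commutation argument above. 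No new combinatorics is needed: the proposition is essentially a citation-assembly lemma, and its role in the paper is to justify that Conjecture~\ref{conj:expansion} is asking for positivity of an a priori well-defined $\Z[\beta]$-combination rather than for existence of an expansion.
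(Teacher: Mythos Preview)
Your high-level strategy---combine \cite[Lemma~7~(1)]{kirillov2017construction} with the main result of \cite{chiu2023expanding}---is exactly the paper's approach, and the proof is indeed a citation-assembly lemma. However, you have misidentified the content of both citations. Lemma~7~(1) of \cite{kirillov2017construction} does not express $G^C_w$ in terms of skew $GQ$'s; it shows $G^C_w \in \Z[\beta][GP^{(\beta)}_\lambda : \lambda\ \text{strict}]$, where the $GP^{(\beta)}_\lambda$ are the $K$-theoretic Schur $P$-functions. The result of \cite{chiu2023expanding} that gets used is that each $GP^{(\beta)}_\lambda$ lies in $\Z[\beta][GQ^{(\beta)}_\lambda : \lambda\ \text{strict}]$. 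Chaining these two memberships gives the proposition.

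Your guess that the intermediary might be skew $GQ$'s is not just imprecise but would actually break the argument: the statement that a skew $GQ^{(\beta)}_{\lambda/\mu}$ lies in $\Z[\beta][GQ^{(\beta)}_\nu]$ is, as the paper itself notes, an \emph{open conjecture} (Lewis--Marberg, Marberg), and indeed one of the motivations for the paper's Conjecture~\ref{conj:expansion}. So the ``skew $GQ$'' route you sketch is currently unavailable. The $GP$ route is the one that works.
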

\begin{proof}
    This is essentially~\cite[Lemma 7~(1)]{kirillov2017construction}, which says $G^C_w \in \Z[\beta][GP^{(\beta)}_\lambda:\lambda$ strict$]$, where $GP^{(\beta)}_\lambda$ is a symmetric function closely related to the $GQ^{(\beta)}_\lambda$ that we do not define.
    In~\cite{chiu2023expanding} the authors show each $GP^{(\beta)}_\lambda$ is an element of  $\Z[GQ^{(\beta)}_\lambda: \lambda$ strict$]$, so the result follows.
\end{proof}

A \defn{unimodal factorization} of the word $\bfa  = (a_1, \dots, a_p)$ in the alphabet $\mathbb{N}$ is the biword $(\bfa,\bfi)$ where
\[
\bfi = i_1 \preceq i_2 \preceq \dots \preceq i_p
\]
with $i_j \in \Z - \{0\}$ for all $j$ so that $i_k = i_{k+1}$ implies $a_k > a_{k+1}$ when $i_k < 0$ and $a_k < a_{k+1}$ when $i_k > 0$.
Such biwords are also known as compatible sequences.
Our nomenclature comes from the following depiction of the unimodal factorization $(\bfa,\bfi)$: group consecutive subwords of $\bfa$ whose corresponding entry in $\bfi$ have the same absolute value using parentheses, indicating the transition from negative to positive values in each parenthesized subword with $|$.
For example, the unimodal factorization
\[
\left(\bfa = (0,1,2,3,0,1,4,2,3,3,0,1,2,5,0,1), \quad \bfi = (1,1,1,\overline{2},2,2,\overline{4},\overline{4},4,\overline{5},5,5,\overline{6},6,6)\right)
\]
is depicted as
\[
(|\; 012)(3\;|\;01)()(42\;|\;3)(3\;|\:012)(5\;|\;01).
\]
Let $\UU(w)$ be the set of unimodal factorizations whose word is in $\HH(w)$ and $\UU_p(w)$ be the subset whose word is in $\HH_p(w)$.
By interpreting the definition of $G^C_w(\bfx)$ in terms of~\eqref{eq:gc-def} appropriately, we have:
\begin{proposition}[{\cite[Prop.~17]{kirillov2017construction}}]
    \label{p:unimodal_GC}
    For $w$ a signed permutation,
    \[
    G^C_w(\bfx) = \sum_{p \geq 0} \sum_{(\bfa,\bfi) \in \UU_p(w)} \beta^{p-\ell(w)} x^\bfi \quad \mbox{where} \quad x^\bfi = x_{|i_1|} \dots x_{|i_p|}.
    \]
\end{proposition}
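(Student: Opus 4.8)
The plan is to unpack the definition~\eqref{eq:gc-def} of $G^C_n(\bfx)$ and match the monomials of each fixed degree against unimodal factorizations. Recall that in the free associative algebra generated by the $u_i$ modulo the $\beta$--idempotent and Coxeter relations, a product $u_{b_1}u_{b_2}\cdots u_{b_q}$ reduces to $\beta^{q-r}u_w$ where $(b_1,\dots,b_q)$ is a $0$--Hecke expression for $w$ and $r = \ell(w)$; this is the standard fact that the $0$--Hecke monoid governs this algebra, and it is exactly the bookkeeping device that makes $\HH_p(w)$ and the exponent $\beta^{p-\ell(w)}$ appear. So first I would expand the product $\prod_{i\geq 1}(1+x_iu_{n-1})\cdots(1+x_iu_0)(1+x_iu_0)\cdots(1+x_iu_{n-1})$ by choosing, for each $i$, a subset of the $2n$ factors from which to take the $x_iu_j$ term rather than the $1$. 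Selecting such data for all $i$ produces a word $\bfa = (a_1,\dots,a_p)$ in $\N$ (the generator indices, read in the order the factors occur) together with the sequence $\bfi$ recording which block of factors (i.e.\ which value of $i$ and which ``half'' — the decreasing run $u_{n-1},\dots,u_0$ or the increasing run $u_0,\dots,u_{n-1}$) each letter came from.

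The second step is to observe that this bookkeeping data is precisely a unimodal factorization. Within the $i$th copy of the decreasing half $(1+x_iu_{n-1})\cdots(1+x_iu_0)$, any chosen letters appear in strictly decreasing order of index, since the factors are written in decreasing order and we may pick each at most once per product term; within the increasing half $(1+x_iu_0)\cdots(1+x_iu_{n-1})$ the chosen letters appear in strictly increasing order. Encoding the decreasing half by the negative label $\overline{i}$ and the increasing half by the positive label $i$, and ordering labels by $\prec$ (so $\overline1\prec1\prec\overline2\prec2\prec\cdots$, which matches the left-to-right order of the factors in~\eqref{eq:gc-def} — all $x_1$ factors first, decreasing half before increasing half, then all $x_2$ factors, etc.), the condition ``$i_k=i_{k+1}$ forces $a_k>a_{k+1}$ when $i_k<0$ and $a_k<a_{k+1}$ when $i_k>0$'' is exactly the constraint just derived. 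Thus ways to extract a single monomial term from the product are in bijection with biwords $(\bfa,\bfi)$ satisfying the unimodal-factorization conditions, and such a term contributes $x^{\bfi} = x_{|i_1|}\cdots x_{|i_p|}$ times the algebra element $u_{a_1}\cdots u_{a_p}$.

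The third step is to collect terms: the coefficient of $u_w$ in $G^C_n(\bfx)$ receives a contribution $x^{\bfi}$ from the biword $(\bfa,\bfi)$ exactly when the reduction of $u_{a_1}\cdots u_{a_p}$ in the algebra is $\beta^{p-\ell(w)}u_w$, i.e.\ exactly when $\bfa \in \HH_p(w)$; the exponent of $\beta$ is then $p-\ell(w)$. Summing over all $p$ and all such biwords gives
\[
G^C_w(\bfx) = \sum_{p\geq 0}\ \sum_{(\bfa,\bfi)\in\UU_p(w)} \beta^{p-\ell(w)}\, x^{\bfi},
\]
which is the claim. I would also remark that the sum is well defined (locally finite in each monomial degree) because fixing the multiset of $x$-variables bounds $p$, and for each such $p$ there are finitely many factorizations. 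The main obstacle is purely expository rather than mathematical: one must be careful that the left-to-right order in which the noncommuting factors of~\eqref{eq:gc-def} are multiplied corresponds correctly to the total order $\prec$ on the labels and to reading $\bfa$ left to right, and that ``at most one letter per factor'' together with the fixed order of factors in each half gives exactly the strict decrease/increase within a block (rather than weak) — so the delicate point is checking that the unimodal-factorization conditions are matched on the nose, with no off-by-one in which run is ``negative''. Since this proposition is quoted as~\cite[Prop.~17]{kirillov2017construction}, in the paper it suffices to give this bijective dictionary and cite the reference for the routine verification.
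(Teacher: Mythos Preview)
Your proof is correct and follows essentially the same approach as the paper: expand the product~\eqref{eq:gc-def}, observe that a choice of one term from each binomial factor yields a monomial $x_{j_1}u_{a_1}\cdots x_{j_p}u_{a_p}$, and encode which half of the $j$th block each factor came from by the sign of $i_k$ to obtain a unimodal factorization. The paper's proof is terser (it leaves the $\beta^{p-\ell(w)}$ bookkeeping and the verification that all unimodal factorizations arise this way to the reader), but the bijective dictionary is identical.
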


\begin{proof}
    A non-zero monomial in $G^C_n(\bfx)$ is of the form
    \[
    x_{j_1} u_{a_1} \cdot \ldots \cdots x_{j_p} u_{a_p}.
    \]
    Construct the biword $(\bfa,\bfi)$ from this monomial with $\bfa = (a_1,\dots,a_p)$ and $\bfi = (i_1,\dots,i_p)$ where 
    \[
    i_k = \begin{cases}
        \overline{j_k} & x_{i_k} u_{a_k}\ \mbox{is in the descending part of the}\ j_k th\ \mbox{term in}\ G^C_n(\bfx),\\
        j_k. & \mbox{else}
    \end{cases}
    \]
    The reader can confirm that $(\bfa,\bfi) \in \UU_p(w)$ where $w = u_{a_1} \circ \dots \circ u_{a_p}$, and that all such terms arise in this way.
    \end{proof}

As a corollary, we see Type $C$ $K$--Stanley symmetric functions can be used to enumerate $0$--Hecke expressions.

\begin{corollary}
    \label{c:hecke-enumeration}
    For $w \in W_n$, the coefficient of $\beta^{n-\ell(w)}x_1 \dots x_n$ in $G^C_w$ is $2^n \cdot |\HH_n(w)|$.
\end{corollary}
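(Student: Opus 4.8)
The plan is to read off the coefficient of $\beta^{n-\ell(w)} x_1 \cdots x_n$ in $G^C_w$ directly from Proposition~\ref{p:unimodal_GC}. By that proposition, the coefficient of $x_1 \cdots x_n$ in $G^C_w$ is $\sum_{(\bfa,\bfi)} \beta^{p - \ell(w)}$, summed over unimodal factorizations $(\bfa,\bfi) \in \UU(w)$ with $x^\bfi = x_1 \cdots x_n$. The monomial condition $x^\bfi = x_1 \cdots x_n$ forces $|i_1|, \ldots, |i_p|$ to be a permutation of $1, 2, \ldots, n$; in particular $p = n$, so the only contribution to the coefficient of $\beta^{n - \ell(w)} x_1 \cdots x_n$ comes from these, and every such term carries exactly the power $\beta^{n - \ell(w)}$. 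Thus the desired coefficient equals the number of unimodal factorizations $(\bfa, \bfi) \in \UU_n(w)$ for which $(|i_1|, \ldots, |i_n|) = (1, 2, \ldots, n)$.

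Next I would count such factorizations. Since the absolute values $|i_1|, \ldots, |i_n|$ are forced to equal $1, 2, \ldots, n$ in that order, the biword $\bfi$ is determined entirely by a choice of sign for each of the $n$ entries: $i_j \in \{j, \overline{j}\}$. There are $2^n$ such sign choices. The key observation is that for each choice of signs, the constraints defining a unimodal factorization (namely $i_k = i_{k+1} \Rightarrow a_k > a_{k+1}$ when $i_k < 0$, and $i_k = i_{k+1} \Rightarrow a_k < a_{k+1}$ when $i_k > 0$) are vacuous, because consecutive entries $i_k$ and $i_{k+1}$ have distinct absolute values $k$ and $k+1$ and hence can never be equal. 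Therefore the pair $(\bfa, \bfi)$ lies in $\UU_n(w)$ if and only if $\bfa \in \HH_n(w)$, with no further condition relating $\bfa$ and the signs. Hence the factorizations we are counting are exactly pairs $(\bfa, \varepsilon)$ where $\bfa \in \HH_n(w)$ and $\varepsilon \in \{+, -\}^n$ is an arbitrary sign vector, giving a total of $2^n \cdot |\HH_n(w)|$.

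Finally I would assemble these observations into the statement: the coefficient of $\beta^{n - \ell(w)} x_1 \cdots x_n$ in $G^C_w$ equals $2^n \cdot |\HH_n(w)|$. I do not anticipate a serious obstacle here; the main point to get right is the bookkeeping that the monomial condition pins down both the length $p = n$ and the degree exponent $n - \ell(w)$ simultaneously, so that no other $(\bfa,\bfi) \in \UU(w)$ contributes, and that the sign choices in $\bfi$ are completely unconstrained precisely because all absolute values are distinct. One should also note in passing that $\HH_n(w)$ may be empty (when $n < \ell(w)$), in which case both sides vanish, so the identity holds trivially in that range as well.
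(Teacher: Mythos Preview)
Your proposal is correct and follows the same approach as the paper's proof, which also derives the result directly from Proposition~\ref{p:unimodal_GC} by noting that each $i_j$ can independently be $j$ or $\overline{j}$. The only minor expository gap is that when you pass from ``$|i_1|,\ldots,|i_n|$ is a permutation of $1,\ldots,n$'' to ``$|i_j|=j$ in order,'' you are implicitly using the weak $\preceq$-increasing condition on $\bfi$ (which forces absolute values to be weakly increasing); making this explicit would tighten the argument, but it is not a substantive issue.
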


\begin{proof}
    This follows Proposition~\ref{p:unimodal_GC}: each monomial $x_1 \dots x_n$ corresponds to a unimodal factorization $(\bfa,\bfi) \in \UU(w)$ where $\bfa \in \HH_n(w)$ and each $i_j$ can be $\overline{j}$ or $j$ independent of other choices.
\end{proof}

Using the unimodal factorization characterization of Type C $K$--Stanley symmetric functions, we can characterize certain products, generalizing and extending~\cite[Cor.~3.3]{billey1998vexillary} to the $K$--theoretic setting.

\begin{proposition}
    \label{p:signed-cross-ordinary}
    Let $u \in W_k$ and $v \in S_n$ so that $v(i) = i$ for all $i \in [k]$. Then
    \[
    G^C_{uv} = G^C_u \cdot G^C_v.
    \]
\end{proposition}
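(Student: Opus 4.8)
The plan is to use the unimodal factorization characterization from Proposition~\ref{p:unimodal_GC} and show that the generating functions on both sides match by establishing a weight-preserving bijection on unimodal factorizations. First I would observe that $uv$ is a signed permutation with $(uv)(i) = v(i)$ for $i > k$ (where we view $v$ acting on $\{k+1,\dots,n\}$) and $(uv)(i) = u(i)$ for $i \leq k$; since $u$ uses only generators $s_0,\dots,s_{k-1}$ and $v$ uses only generators $s_{k+1},\dots,s_{n-1}$ (because $v$ fixes $[k]$ pointwise, so $v \in S_n$ can be written without $s_0,\dots,s_k$), these two sets of generators commute with one another by the Commutation relation, since any index $i \leq k-1$ and any index $j \geq k+1$ satisfy $|i - j| \geq 2$. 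In particular, $\ell(uv) = \ell(u) + \ell(v)$, and more importantly, in the $0$-Hecke monoid $u \circ v = uv$ with the letters freely commuting across the two blocks.

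The key combinatorial step is then: a Hecke word $\bfa \in \HH_p(uv)$ decomposes uniquely — up to the commutation moves, but canonically once we record which letters came from which block — as a shuffle of a word $\bfa' \in \HH(u)$ (letters in $\{0,\dots,k-1\}$) and a word $\bfa'' \in \HH(v)$ (letters in $\{k+1,\dots,n-1\}$). I would make this precise by saying: since the two alphabets are disjoint and the generators commute, every $0$-Hecke expression for $uv$ is obtained by interleaving a $0$-Hecke expression for $u$ with one for $v$, and the underlying subwords are determined by the letter values. Next I would upgrade this to unimodal factorizations: given $(\bfa,\bfi) \in \UU_p(uv)$, I claim the colors $\bfi$ split compatibly with the block decomposition. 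The subtlety to check is that within a single parenthesized group of $(\bfa,\bfi)$ — a unimodal run sharing one absolute color value — the letters from the two blocks do not interfere with the unimodality condition: a negative-colored group must be strictly decreasing and a positive-colored group strictly increasing, and since any letter $< k$ is smaller than any letter $> k$, a mixed group is unimodal for color $\bar c$ (resp. $c$) if and only if its $u$-part and its $v$-part are each separately decreasing (resp. increasing). Hence $(\bfa,\bfi)$ restricts to unimodal factorizations $(\bfa',\bfi') \in \UU(u)$ and $(\bfa'',\bfi'') \in \UU(v)$ on the same color sequence (with repeated colors now allowed to be "used twice," once per block), and conversely any pair of unimodal factorizations of $u$ and $v$ glues back. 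Tracking the monomial weight, $x^\bfi = x^{\bfi'} x^{\bfi''}$ and $\beta^{p - \ell(uv)} = \beta^{(p' - \ell(u))}\beta^{(p'' - \ell(v))}$, so summing over all $p$ gives $G^C_{uv} = G^C_u \cdot G^C_v$ by Proposition~\ref{p:unimodal_GC}.

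The main obstacle I anticipate is the bookkeeping around a single color class being shared by both blocks. In Proposition~\ref{p:unimodal_GC} the colors $i_1 \preceq \cdots \preceq i_p$ range over $\Z - \{0\}$ with no injectivity, so when I "split" a unimodal factorization of $uv$ I must be careful that the resulting $\bfi'$ and $\bfi''$ are each legitimate color sequences for $u$ and $v$ respectively and that the reconstruction is a genuine bijection — in particular that I have not lost or double-counted the data recording, within a fixed color, the relative order of a $u$-letter and a $v$-letter that carry the same color. The clean way to handle this is to note that within one parenthesized group the negative part precedes the positive part, and within the negative (resp. positive) part all $u$-letters are automatically larger (resp. smaller) in the $\prec$-order than... no: all $u$-letters are smaller in \emph{value} than all $v$-letters, so in a decreasing group every $v$-letter precedes every $u$-letter and in an increasing group every $u$-letter precedes every $v$-letter — the interleaving within a color class is therefore \emph{forced}, not free, which is exactly what makes the map a bijection rather than a many-to-one map. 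Once this rigidity is observed, the splitting is canonical and the bijection, hence the identity, follows. An alternative and perhaps cleaner route, which I would mention as a fallback, is to work directly with the noncommutative generating function \eqref{eq:gc-def}: the factors involving $u_0,\dots,u_{k-1}$ and those involving $u_{k+1},\dots,u_{n-1}$ commute, so $G^C_n(\bfx)$ factors (after this rearrangement) into the product of a series computing $G^C_u$ and one computing $G^C_v$, and reading off the coefficient of $u_{uv} = u_u \cdot u_v$ gives the claim — though making the rearrangement rigorous requires the same commutation observation about disjoint alphabets.
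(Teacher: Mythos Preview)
Your proposal is correct and follows essentially the same approach as the paper: both arguments use the unimodal factorization description of $G^C_w$ from Proposition~\ref{p:unimodal_GC}, split each parenthesized block of a factorization in $\UU(uv)$ into its $u$-letters (indices $<k$) and its $v$-letters (indices $>k$), and observe that the relative order within a block is forced by the value inequality, yielding a weight-preserving bijection $\UU(uv)\to\UU(u)\times\UU(v)$. Your discussion of the ``forced interleaving'' within a shared color class is exactly the content of the paper's displayed form $(b_1>\cdots>b_i>c_1>\cdots>c_j\mid c_{j+1}<\cdots<c_\ell<b_{i+1}<\cdots<b_m)$, just stated more explicitly.
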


\begin{proof}
    Consider the $n$th parenthesization of $(\bfa,\bfi) \in \UU_{uv}$, which is of the form
    \[
    (b_1 > \dots > b_i > c_1 > \dots > c_j \mid c_{j+1} < \dots < c_\ell < b_{i+1} < \dots < b_m).
    \]
    where the $b'$ are greater than $k$ and $c$'s are less than $k$.
    This term can be split into two parenthesizations
    \[
    (b_1 > \dots > b_i \mid b_{i+1} < \dots < b_m) \quad \mbox{and} \quad (c_1 > \dots > c_j \mid < c_{j+1} < \dots < c_\ell).
    \]
    Applying this procedure to all parenthesizations of $(\bfa,\bfi)$, we obtain unimodal factorizations $(\textbf{b}, \textbf{j}) \in \UU(u)$ and $(\textbf{c},\textbf{l}) \in \UU(w)$, respectively.
    This process is invertible, so we have a weight preserving bijection from $\UU(uv)$ to $\UU(u) \times \UU(v)$, hence the result follows from Proposition~\ref{p:unimodal_GC}.
\end{proof}

In~\cite{kirillov2017construction}, the authors show for Grassmannian $w(\lambda)$ that $G^C_{w(\lambda)} = GQ^{(\beta)}_\lambda$ via a geometric argument.
This is also a special case of our Theorem~\ref{t:full-commutative}, which we prove combinatorially.
We extend the Grassmannian result to all vexillary permutations via a geometric argument:

\begin{proposition}
    \label{p:GC-vexillary}
    For $w \in W_n$ vexillary, $G^C_w = GQ^{(\beta)}_{\lambda(w)}$.
\end{proposition}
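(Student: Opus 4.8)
The plan is to reduce Proposition~\ref{p:GC-vexillary} to the known Pfaffian formula for Type C degeneracy loci from~\cite{anderson2019k}. The statement is essentially a dictionary between two objects: on the one hand $G^C_w$, which by Proposition~\ref{p:unimodal_GC} is the generating function for unimodal factorizations of Hecke words of $w$; on the other, the $K$-theoretic Chern-class formula computing the class of the degeneracy locus attached to $w$. Both are known to live in $\Z[\beta][GQ^{(\beta)}_\lambda]$ (Proposition~\ref{p:GC-GQ}), so the content is that when $w$ is vexillary, the right-hand side collapses to a single $GQ^{(\beta)}_{\lambda(w)}$, with $\lambda(w)$ the shifted shape combinatorially attached to $w$ in~\cite{billey1998vexillary}.

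\medskip

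First I would recall the precise relationship between $G^C_w$ and the Type C double Grothendieck polynomials $\fG^C_w$ of~\cite{kirillov2017construction}: $G^C_w$ is the stable limit obtained by setting the second (``$y$'') alphabet to zero and letting $n \to \infty$. This is the content of the construction in~\cite{kirillov2017construction} and is why the non-commutative generating function~\eqref{eq:gc-def} computes $G^C_w$. Next I would invoke Anderson's theorem~\cite{anderson2019k}: the $K$-theoretic class of the Type C degeneracy locus for a signed permutation $w$ is given by a Pfaffian in the $GP^{(\beta)}$ (equivalently, after the Chiu--You conversion~\cite{chiu2023expanding}, $GQ^{(\beta)}$) classes of one-row and two-row shapes, with the entries of the Pfaffian determined by the rank data of $w$. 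The key combinatorial input, from~\cite{billey1998vexillary}, is that $w$ is vexillary precisely when this rank data is ``triangular'' in the sense that the Pfaffian degenerates: the relevant matrix has a block structure forcing the Pfaffian to equal a single Schur-Pfaffian, which by the Nimmo-type / Józefiak--Pragacz identity for $Q$-functions (and its $K$-theoretic lift) evaluates to $GQ^{(\beta)}_{\lambda(w)}$ for the shape $\lambda(w)$ read off from the essential set of $w$. I would then match the stable limit of Anderson's formula with $G^C_w$ via the identification in the previous sentence, concluding $G^C_w = GQ^{(\beta)}_{\lambda(w)}$.

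\medskip

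To organize the argument cleanly I would proceed in three steps: (1) state the stabilization identity $\lim_{n} \fG^C_w|_{y=0} = G^C_w$ and cite~\cite{kirillov2017construction}; (2) state Anderson's Pfaffian formula in the stable limit, in the normalization where it outputs an element of $\Z[\beta][GQ^{(\beta)}_\lambda]$, using~\cite{chiu2023expanding} to pass from $GP$ to $GQ$; (3) invoke the characterization of vexillary signed permutations in~\cite{billey1998vexillary} — that the pattern-avoidance condition is equivalent to the essential-set rank conditions being nested/dominant — to show the Pfaffian reduces to a single $GQ^{(\beta)}_{\lambda(w)}$, where $\lambda(w)$ is exactly the shifted shape defined in~\cite{billey1998vexillary}. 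Since the $\beta \to 0$ specialization of all of this recovers the classical statement $F^C_w = Q_{\lambda(w)}$ from~\cite{billey1998vexillary}, and the $K$-theoretic Pfaffian is the standard $\beta$-deformation of the cohomological one, the degeneration in step (3) is governed by the same linear-algebra mechanism as in the classical case.

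\medskip

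The main obstacle I anticipate is bookkeeping rather than conceptual: Anderson's formula in~\cite{anderson2019k} is stated for degeneracy loci with a particular choice of conventions (and, as the footnote in the excerpt warns, the published version had an error corrected on the arXiv), so the real work is normalizing it — tracking the $\beta$-powers, the passage from $GP$ to $GQ$ via~\cite{chiu2023expanding}, and the stabilization to infinitely many variables — so that it literally outputs $G^C_w$ rather than merely something equal to it up to a unit or a change of variables. A secondary subtlety is making the shape $\lambda(w)$ in Theorem~\ref{t:vexillary} agree on the nose with the shape defined in Section~\ref{ss:symmetric-functions} (which the excerpt says is defined ``implicitly''); this requires checking that the shape extracted from Anderson's Pfaffian coincides with the one in~\cite{billey1998vexillary}, which should follow because both are determined by the essential set of $w$, but the verification of the dictionary is where care is needed.
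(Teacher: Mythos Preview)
Your high-level strategy---appeal to Anderson's Pfaffian formula~\cite{anderson2019k} and then specialize---is the same as the paper's, but your step~(3) is both harder than necessary and rests on a small misconception.

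First, Anderson's Pfaffian formula in~\cite{anderson2019k} is already stated \emph{only} for vexillary signed permutations; there is no general Pfaffian for arbitrary $w$ that the vexillary hypothesis causes to ``degenerate'' or ``collapse''. So step~(3) is not about triggering a reduction of a larger formula, but about evaluating one that is already shape-governed.

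Second---and this is the real simplification you are missing---the paper never evaluates the Pfaffian at all. The entries of Anderson's Pfaffian are certain Chern classes $c_w(j)$ whose indices are determined by the shape $\lambda(w)$; after setting the auxiliary $y$ and $z$ variables to zero (which by~\cite[Prop.~2]{kirillov2017construction} yields exactly $G^C_w$), the entire expression depends only on $\lambda(w)$ and not on any further data of $w$. Thus any two vexillary permutations with the same shape $\lambda$ have the same $G^C$. Now one simply compares with the Grassmannian permutation $w(\lambda)$, which is vexillary of shape $\lambda$ and for which $G^C_{w(\lambda)} = GQ^{(\beta)}_\lambda$ is already known (from~\cite{kirillov2017construction}, or as a special case of Theorem~\ref{t:full-commutative}). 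The result follows.

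Your route---directly evaluating the Pfaffian as $GQ^{(\beta)}_{\lambda(w)}$ via a $K$-theoretic J\'ozefiak--Pragacz or Nimmo-type identity---could in principle be made to work, since such Pfaffian expressions for $GQ_\lambda$ do exist. But it requires you to locate or prove that identity in the right normalization, and it makes the detour through~\cite{chiu2023expanding} (converting $GP$ to $GQ$) unnecessary. The paper's shortcut gets the result for free once the Grassmannian case is in hand, and sidesteps all the normalization bookkeeping you correctly flagged as the main obstacle.
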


\begin{proof}[Proof sketch]
For the degeneracy locus associated to a vexillary signed permutation  $w$, there is a Pfaffian formula for its equivariant $K$--theory class in terms of Chern classes $c_w(j)$~\cite[Thm.~2]{anderson2019k}.
With appropriate conventions and specializing to the single case, we can view such Chern classes as
\[
c_w(j) =  \cdot \frac{(1-y_a)}{(1-z_b)} \cdot (1 + c_1 + c_2 + \dots)
\]
where $y_a$ and $z_b$ are variables whose indices are determined by $w$.
The terms of the matrix whose Pfaffian is being taken are certain $c_w(j)$'s, where each $j$ term is determined by the shape $\lambda(w)$.
By~\cite[Prop.~2]{kirillov2017construction}, setting the $y_i$ and $z_j$ variables to zero recovers $G^C_w$ from this formula.
Therefore $G^C_w$ is determined by $\lambda(w)$, so the result follows from the Grassmannian case.
\end{proof}

Note by taking $\beta=0$ in (2) we have for $w$ vexillary that $F^C_w = Q_{\lambda(w)}$, as shown in~\cite{billey1998vexillary}.
We take this as the definition of $\lambda(w)$.

For $\lambda$ a strict partition and $a, b$ positive integers, recall from the introduction that 
\[
w(a,b,k) = 12 \dots k\,\ell{+}1\, \ell{+}2\dots n\,k{+}1\,k{+2}\dots \ell
\]
where $\ell = a + k$ and $n = a + b + k$.
When $a \leq b$, recall also the shifted trapezoid
\[
\tau(a,b) = (a+b-1,a+b-3,\dots b-a+1).
\]
As a consequence of Proposition~\ref{p:GC-vexillary} we prove Conjecture~4.23 from~\cite{lewis2021enriched}:
\begin{corollary}
    \label{c:rectangle-trapezoid}
For $a,b,k$ positive integers with $a\leq b$,
\[
G^C_{w(a,b,k)} = GQ_{\tau(a,b)}.
\]
    
\end{corollary}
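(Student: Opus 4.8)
The plan is to obtain the statement immediately from Proposition~\ref{p:GC-vexillary}: it suffices to prove (a) that $w(a,b,k)$ is vexillary, and (b) that its associated shifted shape is $\lambda(w(a,b,k)) = \tau(a,b)$.

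For (a): the word $w(a,b,k)$ is an ordinary (unsigned) permutation, so it automatically avoids all thirteen of the eighteen patterns in Section~\ref{ss:signed-perms} that contain a barred entry, and only $321$, $2143$, $2413$, $3142$ remain. In one-line notation $w(a,b,k)$ is the concatenation of the three increasing runs $1\,2\cdots k$, $(\ell{+}1)\cdots n$, $(k{+}1)\cdots\ell$; since $k<\ell{+}1$ its unique descent sits at the junction of the last two runs, so $w(a,b,k)$ is Grassmannian in type A and hence avoids $321$ and $2143$. Comparing the value sets $[1,k]$, $[\ell{+}1,n]$, $[k{+}1,\ell]$ of the three runs shows that every inversion of $w(a,b,k)$ pairs a position in the middle run with a later position in the last run; feeding this into the relative-order constraints of a putative occurrence of $2413$ or $3142$ forces its four positions into a configuration that violates monotonicity of the runs, which a short case check rules out. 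Note this part never uses $a\le b$; that hypothesis enters in (b) only as the requirement that $\tau(a,b)$ be a strict partition.

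For (b): since $F^C_w = Q_{\lambda(w)}$ for vexillary $w$ — this is the very definition of $\lambda(w)$, obtained by setting $\beta = 0$ — it is equivalent to show $F^C_{w(a,b,k)} = Q_{\tau(a,b)}$. I would argue this via the heap. Since $w(a,b,k)$ is fully commutative (it avoids $321$) and lies in $S_n\subset W_n$ without involving $s_0$, its heap coincides with its type A heap, namely the $b\times a$ rectangle (the cell poset of the type A Grassmannian shape $(a^b)$), independent of $k$ up to relabeling generators. Under the heap-to-shifted-shape correspondence (cf.\ the proof of Theorem~\ref{t:full-commutative}), this rectangular heap is the cell poset of the skew shifted shape $(a{+}b{-}1,a{+}b{-}2,\dots,b)/(a{-}1,a{-}2,\dots,1)$, whose diagram is a genuine $a\times b$ block of cells; hence $G^C_{w(a,b,k)} = GQ_{(a+b-1,\dots,b)/(a-1,\dots,1)}$, and at $\beta = 0$, $F^C_{w(a,b,k)}$ is the skew Schur $Q$-function of that shape. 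It then remains to recognize this skew $Q$-function as the straight one $Q_{\tau(a,b)}$ — equivalently, to check that shifted jeu de taquin rectifies the $a\times b$ skew rectangle to the shifted diagram of $\tau(a,b)$. (Alternatively, one can avoid the skew detour and compute $\lambda(w(a,b,k))$ directly from its type-C code $(0^k,a^b,0,\dots)$ via the Billey--Lascoux shape algorithm for vexillary signed permutations, or by a direct Kra\'skiewicz-insertion computation on the reduced words of $w(a,b,k)$.) With (a) and (b) in hand, Proposition~\ref{p:GC-vexillary} gives $G^C_{w(a,b,k)} = GQ_{\lambda(w(a,b,k))} = GQ_{\tau(a,b)}$.

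Step (a) is routine; step (b) is the crux. The conceptual obstacle is that the two signed permutations now known to have $G^C$ equal to $GQ_{\tau(a,b)}$ — the unsigned $w(a,b,k)$ on one hand and the Grassmannian signed permutation $w(\tau(a,b))$, which carries many bars, on the other — bear no visible resemblance, so the equality of their shapes is not formal and must be extracted from the vexillary-shape machinery. I expect the cleanest implementation is through the fully-commutative/heap picture already developed for Theorem~\ref{t:full-commutative}, after which the skew-rectangle-to-trapezoid identity becomes a short bookkeeping exercise with shifted diagrams; the care required is to keep all the conventions (rows versus columns, the normalization of $D_\lambda$, marked versus standard tableaux) consistent throughout.
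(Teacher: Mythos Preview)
Your proposal is correct and follows the same high-level strategy as the paper: check that $w(a,b,k)$ is vexillary with $\lambda(w(a,b,k))=\tau(a,b)$, then invoke Proposition~\ref{p:GC-vexillary}. The paper's proof, however, is a single line: it cites the discussion after \cite[Cor.~3.3]{billey1998vexillary} for both (a) and (b) at once, and your parenthetical alternative for (b) --- computing the shape via the Billey--Lam shape algorithm --- is exactly that.

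Your primary route for (b), through the heap identification $F^C_{w(a,b,k)} = Q_{\text{skew rectangle}}$ followed by the rectification $Q_{\text{skew rectangle}} = Q_{\tau(a,b)}$, is valid but more roundabout. That last identity is classical (it is the $\beta=0$ case of the Lewis--Marberg conjecture that the paper establishes at the $K$-theoretic level by combining this very corollary with Corollary~\ref{c:skew-GQ}), so there is no circularity; but you end up feeding a nontrivial Schur-$Q$ identity in to get its $K$-theoretic lift out, whereas the paper avoids this detour entirely by pulling the shape directly from Billey--Lam.
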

\begin{proof}
By discussion after~\cite[Cor.~3.3]{billey1998vexillary}, we see $G^C_{w(a,b,k)}$ is vexillary of shape $\tau(a,b)$.
Therefore, the result follows by Proposition~\ref{p:GC-vexillary}.
\end{proof}

From this, we have:

\begin{corollary}
\label{c:GQ-trapzeoid}
    For $\lambda$ a strict partition with $\lambda_1 < k$ and $a \leq b$ positive integers,
\[
G^C_{w(\lambda,k)\cdot w(a,b,k)} = GQ_\lambda \cdot GQ_{\tau(a,b)}.
\]
\end{corollary}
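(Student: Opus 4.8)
The plan is to deduce Corollary~\ref{c:GQ-trapzeoid} by combining Proposition~\ref{p:signed-cross-ordinary}, the Grassmannian identity $G^C_{w(\lambda)} = GQ^{(\beta)}_\lambda$ from~\cite{kirillov2017construction}, and Corollary~\ref{c:rectangle-trapezoid}. The first step is to unwind the notation: $w(\lambda,k)$ should be the Grassmannian signed permutation whose associated strict partition is $\lambda$, placed so that its support lies in the first $k$ positions (this is why the hypothesis $\lambda_1 < k$ appears --- it guarantees $|\lambda_i| \le \lambda_1 < k$ so all the negative entries of the Grassmannian element fit in positions $1,\dots,k$), while $w(a,b,k)$ is the vexillary element from Corollary~\ref{c:rectangle-trapezoid}, which by construction is the identity on $[k]$ and acts nontrivially only on positions $> k$. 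Thus $w(\lambda,k) \in W_k$ (extended by the identity to $W_n$) and $w(a,b,k)$ is a permutation fixing $[k]$ pointwise, so the pair $(u,v) = (w(\lambda,k), w(a,b,k))$ is exactly of the form required in the hypothesis of Proposition~\ref{p:signed-cross-ordinary}.

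The second step is then purely formal: by Proposition~\ref{p:signed-cross-ordinary},
\[
G^C_{w(\lambda,k)\cdot w(a,b,k)} = G^C_{w(\lambda,k)} \cdot G^C_{w(a,b,k)}.
\]
Now $G^C_{w(\lambda,k)} = GQ^{(\beta)}_\lambda$ because $w(\lambda,k)$ is Grassmannian with shape $\lambda$ (using the stability of $G^C_w$ under $W_k \hookrightarrow W_n$, so it does not matter in which ambient $W_n$ we compute), and $G^C_{w(a,b,k)} = GQ_{\tau(a,b)}$ by Corollary~\ref{c:rectangle-trapezoid}. Substituting gives $G^C_{w(\lambda,k)\cdot w(a,b,k)} = GQ_\lambda \cdot GQ_{\tau(a,b)}$, which is the claim.

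The main obstacle --- really the only nontrivial point --- is verifying that the hypotheses of Proposition~\ref{p:signed-cross-ordinary} genuinely apply, i.e.\ that with the intended definitions $w(\lambda,k)$ lives in $W_k$ and $w(a,b,k)$ fixes $[k]$ pointwise, and moreover that the product $w(\lambda,k)\cdot w(a,b,k)$ in the statement is the honest group product (not a shifted or twisted composition). I would check this against the explicit one-line notation: $w(a,b,k) = 12\cdots k\,(\ell{+}1)\cdots n\,(k{+}1)\cdots \ell$ indeed begins $12\cdots k$, so it is the identity on $[k]$; and the Grassmannian $w(\lambda,k)$ by definition has all its signed (negative) values among positions $1,\dots,\ell(\lambda)$ with absolute values at most $\lambda_1 < k$, hence acts within $[\bar k]\cup[k]$, placing it in $W_k$. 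One should also note $\ell(w(\lambda,k)\cdot w(a,b,k)) = \ell(w(\lambda,k)) + \ell(w(a,b,k))$ so the degrees match, but this is automatic from the product formula once the factorization of $G^C$ holds. A brief remark confirming these two pattern/support facts is all that is needed; the rest is substitution.
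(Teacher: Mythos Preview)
Your proposal is correct and follows essentially the same route as the paper: apply Proposition~\ref{p:signed-cross-ordinary} to factor $G^C_{w(\lambda,k)\cdot w(a,b,k)}$, then identify each factor as a $GQ$. The only cosmetic difference is that the paper invokes Proposition~\ref{p:GC-vexillary} (the general vexillary case) for $G^C_{w(\lambda,k)} = GQ_\lambda$, whereas you cite the Grassmannian special case directly; your added verification that $w(a,b,k)\in S_n$ fixes $[k]$ and $w(\lambda,k)\in W_k$ is a useful detail the paper leaves implicit.
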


\begin{proof}
    We compute
    \[
    G^C_{w(\lambda,k) \cdot w(a,b,k)} = G^C_{w(\lambda,k)} \cdot G^C_{w(a,b,k)} = GQ_\lambda \cdot GQ_{\tau(a,b)}
    \]
    with the first equality by Proposition~\ref{p:signed-cross-ordinary} and the second by Proposition~\ref{p:GC-vexillary} and Corollary~\ref{c:GQ-trapzeoid}.
\end{proof}

One might hope further $GQ$--products can be modeled as $GC$--expansions.
Unfortunately, as discussed in~\cite{billey1998vexillary}, trapezoids are the \textbf{only} shifted shapes $\lambda$ for which $w \in S_n$ exists with $GQ_w|_{\beta = 0} = Q_\lambda$.
This property was needed to ensure $w(\lambda,k)$ would not be changed in above product.

\subsection{Top fully commutative elements}
In this section, we study top fully commutative elements of $W_n$, which we abbreviate to `top' and are indexed by skew shifted shapes.
There is a transparent bijection between shifted standard tableaux of strict shape $\lambda/\mu$ and reduced words for the top element $w_{(\lambda/\mu)}$ due to Stembridge~\cite{stembridge1996fully,stembridge97}.
We extend this bijection to the map $\res: \ShSSYT_p(\lambda/\mu) \to{} \UU_p(w_{(\lambda/\mu)})$, proving Theorem~\ref{t:full-commutative}, which says
\[
GQ^{(\beta)}_{\lambda/\mu} = G^C_{w(\lambda/\mu)}
\]

We say $w \in W_n$ is \defn{fully commutative} if its reduced words contain no braid relations, that is none of them contain the consecutive subwords $(0,1,0,1)$ and $(i,i+1,i)$ for $i \geq 1$.
Fully commutative elements for Coxeter groups have been studied extensively, with Stembridge~\cite{stembridge1996fully} giving the first systematic treatment.
A key feature of fully commutative $w$ is that its reduced words are in bijection with linear extensions of an associated poset called the \defn{heap} of $w$.

We are especially interested in signed permutations $w$ that are \defn{top fully commutative}, abbreviated to \defn{top}, which are fully commutative and whose reduced words do not contain the consecutive subword $(1,0,1)$.
As shown in~\cite[Cor.~5.6]{stembridge97}, top signed permutations are those avoiding the patterns
\[
\overline{1}\overline{2},\ \overline{1}2,\ \overline{3}2\overline{1},\ \overline{3}21,\ 32\overline{1}, 321.
\]
The heaps of top elements are skew strict partitions.

\begin{proposition}[{\cite[Thm.~8.2]{fomin1996combinatorial},\ \cite[Prop.~6.1]{stembridge97}}]  
\label{p:top-heap}
For $w \in W_n$ top, there is a shifted shape $\lambda/\mu$ so that the heap of $w$ is isomorphic to $D_{\lambda/\mu}$.
Moreover, for every skew shape $\lambda/\mu$ such a top element exists.
\end{proposition}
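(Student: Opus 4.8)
The plan is to reduce both directions to the structure theory of fully commutative elements in Coxeter groups of type B/C. For the first direction, let $w \in W_n$ be top, so by Stembridge's classification $w$ avoids the six patterns $\overline{1}\overline{2}$, $\overline{1}2$, $\overline{3}2\overline{1}$, $\overline{3}21$, $32\overline{1}$, $321$. Since $w$ is in particular fully commutative, its heap $H(w)$ is a well-defined poset whose linear extensions biject with the reduced words of $w$; recall that the elements of the heap are the generators appearing in any (hence every) reduced word, each tagged by its ``label'' $i \in \{0,1,\dots,n-1\}$, with the covering relations generated by consecutive occurrences of non-commuting generators. First I would show that in the heap of a top element no label $i \geq 1$ occurs more than twice and the label $0$ occurs at most once: the label $0$ occurring twice forces a $(1,0,1)$ or $(0,1,0,1)$ subword (by commutation one can bring the two $0$'s close together, and the intervening generators can only be $1$'s, producing the forbidden pattern), and a label $i$ occurring three times similarly forces a braid $(i,i{+}1,i)$. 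With this multiplicity bound in hand, the heap sits inside a ``staircase''-shaped grid: place the element with label $i$ in column $i$ (or $i+1$, suitably normalized), and the at-most-two occurrences of each label stack vertically. I would then verify that the covering relations are exactly the grid adjacencies of a shifted skew diagram $D_{\lambda/\mu}$ — this is where the ``top'' (no $(1,0,1)$) condition does real work, since it is precisely what rules out the extra diagonal adjacency near column $0$ that would occur for a general fully commutative element.

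For the converse, given any skew shifted shape $\lambda/\mu$, I would explicitly build a top element whose heap is $D_{\lambda/\mu}$: read a linear extension of $D_{\lambda/\mu}$ (e.g. column-reading order), translate cell $(i,j)$ to the generator $s_{j-i}$ (so cells on the main shifted diagonal give $s_0$), and take $w = \prod s_{j-i}$ over that order. One must check (a) that this expression is reduced — equivalently that the number of cells equals $\ell(w)$ — which follows because distinct reduced words of a fully commutative element correspond to distinct linear extensions and the heap one recovers is $D_{\lambda/\mu}$ itself; and (b) that $w$ so constructed is top, i.e. fully commutative with no $(1,0,1)$, which is immediate from the shifted-diagram adjacency structure (a $(1,0,1)$ pattern in a reduced word would require three cells in columns $1,0,1$ forming a path, impossible in a shifted skew diagram since column $0$ has at most one cell in each ``antidiagonal band''). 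Here I would lean directly on~\cite[Thm.~8.2]{fomin1996combinatorial} and~\cite[Prop.~6.1]{stembridge97}, which already set up the heap-to-shifted-shape dictionary; the main content is just checking the two conditions match up.

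The main obstacle I anticipate is bookkeeping the precise correspondence between heap adjacencies and the shifted-diagram adjacencies near the label-$0$ column, where the type C ``long braid'' relation $s_1 s_0 s_1 s_0 = s_0 s_1 s_0 s_1$ distorts the naive grid picture: a general fully commutative element has a heap that looks like a shifted shape \emph{doubled} along the diagonal, and it is exactly the top condition that collapses this back to an honest $D_{\lambda/\mu}$. Making that collapse rigorous — rather than hand-waving ``the $0$-column behaves like the diagonal'' — is the delicate step, and I would handle it by a careful case analysis on the local structure of the heap in columns $0$, $1$, $2$, invoking the multiplicity bounds established at the outset. Everything else (the bijection with linear extensions, reducedness of the constructed word) is standard fully-commutative machinery and can be cited.
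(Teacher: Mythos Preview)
The paper does not prove Proposition~\ref{p:top-heap}; it is stated as a cited result, with the remark that a proof appears in~\cite{stembridge97}. So there is no ``paper's own proof'' to compare against beyond the observation that the construction of $w_{(\lambda/\mu)}$ via the content tableau $C$ (cell $(i,j)\mapsto s_{j-i}$) is exactly the explicit converse direction the paper sketches in the paragraph following the proposition.

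That said, your plan contains a genuine error that would derail the forward direction. Your claimed multiplicity bound---``no label $i\ge 1$ occurs more than twice and the label $0$ occurs at most once''---is false. The paper's own example for $\lambda/\mu=(65421/41)$ gives the canonical reduced word
\[
(4,5,1,2,3,4,0,1,2,3,0,1,0),
\]
in which the label $0$ occurs three times and the label $1$ occurs three times. More generally, in a shifted shape with $k$ rows the diagonal cells $(i,i)$ all have content $0$, so the label $0$ can occur up to $k$ times; similarly any content $c$ occurs once for each row long enough to reach it. Your argument that ``two $0$'s force a $(1,0,1)$ or $(0,1,0,1)$'' breaks down because between two $0$'s there can be \emph{several} $1$'s separated by $2$'s (which do not commute with $1$), so you cannot slide the $0$'s and $1$'s together into a forbidden pattern.

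Consequently the picture ``heap sits inside a staircase grid with at most two labels per column'' is not the right one. The correct structure is that the heap of a type B/C fully commutative element is a shifted skew diagram possibly ``doubled'' along the main diagonal (Stembridge's classification), and the top condition is precisely what forces the diagonal to be single rather than doubled. You gesture at this in your last paragraph, but your plan as written would never reach it because the multiplicity argument in the first paragraph is the load-bearing step and it is incorrect. If you want a self-contained argument, the right replacement for the multiplicity bound is a local analysis of how occurrences of labels $0$ and $1$ interleave in the heap order: in a top element, between any two $0$'s there are exactly two $1$'s (and conversely between two consecutive $1$'s at most one $0$), which is what makes the diagonal of the shifted shape behave correctly. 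This is essentially what~\cite[Prop.~6.1]{stembridge97} establishes, and it is significantly more delicate than the bound you proposed.
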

Proposition~\ref{p:top-heap} is stated without proof in~\cite{fomin1996combinatorial}.
A proof appears in~\cite{stembridge97}, where a complete characterization is presented for the fully commutative elements that are not top but whose heap is isomorphic to $D_{\lambda/\mu}$ for some $\lambda/\mu$.

The Grassmannian permutation of shape $\lambda$ is the top element whose heap is the reverse (as a poset) of $D_\lambda$.
More generally, for shifted shape $\lambda/\mu$ with $|\lambda/\mu| = p$ and $\ell(\lambda) = k$, we can construct the associated top element $w_{(\lambda/\mu)}$ as follows.
Let $C$ be the tableau of shape $\lambda/\mu$ whose $(i,j)$th entry $C_{ij} = j - i$ is the \defn{content} of cell $(i,j)$.
Note contents are non-negative since $\lambda/\mu$ is shifted.
Then the top-to-bottom reading word $C_1 C_2 \dots C_k$ of $C$ is the \defn{canonical reduced word} for the top element $w_{(\lambda/\mu)}$ whose heap is $D_{\lambda/\mu}$.
This construction is equivalent to the alternate characterization of top elements~\cite[Cor.~5.6~(b)]{stembridge97}.

The construction of $w_{(\lambda/\mu)}$ is a special case of the bijection between linear extensions of $D_{\lambda/\mu}$, i.e., shifted standard Young tableaux of shape $\lambda/\mu$, and reduced words for $w_{(\lambda/\mu)}$.
Following~\cite{morse2020crystal}, we call this map (and its subsequent generalization) the \defn{residue map}, denoted $\res$.
Given a shifted standard Young tableau $T$ of shape $\lambda/\mu$, construct the associated reduced word $\res(T) = (a_1,\dots,a_p)$ for $w_{(\lambda/\mu)}$ by setting $a_k = j-i$ where $(i,j) = T^{-1}(k)$ is the cell labeled $k$ in $T$.
For example, with $\lambda/\mu = (65421/41)$ the top element $w_{(\lambda/\mu)}$ has canonical reduced word $(4,5,1,2,3,4,0,1,2,3,0,1,0)$, and
\[
T = \begin{ytableau}
    \none & \none & 2 & 3 & 6\\
     \none & 1 & 4 & 7\\
    \none & \none &5
\end{ytableau} \quad \mbox{has} \quad \res(T) = (0,2,3,1,0,4,2) \in \HH_7(w_{(531/2)}).
\]

The above description of the residue map is the restriction of a more general map $\res$ from shifted set-valued semistandard tableaux to unimodal factorizations of $0$--Hecke expressions.
\begin{definition}
\label{d:res}
For $T \in \ShSSYT(\lambda/\mu)$, construct $\res(T)$ iteratively as follows.
For each positive integer $k$ in ascending order, we construct the $k$th parenthesization $(\bfa^k\  \mid \ \bfb^k)$ as follows:
\begin{itemize}
    \item Let $(i_1,j_1),\dots, (i_\ell,j_\ell)$ be the cells containing $\overline{k}$ with $i_1 < \dots < i_\ell$.
    Then $\bfa^k = (j_1 {- i_1},\dots, j_\ell {-} i_\ell)$.
    \item Let $(p_1,q_1),\dots, (p_m,q_m)$ be the cells containing $k$ with $q_1 < \dots < q_m$.
    Then $\bfb^k = (q_1 {-} p_1,\dots,q_m {-} p_m)$.
\end{itemize}
Note the semistandard conditions guarantee that each row contains at most one $\overline{k}$ and each column contains at most one $k$ so the above construction is well-defined.
To see the resulting parenthesization is unimodal, note the cells containing $\overline{k}$ are being read weakly from right to left, strictly from top to bottom so their contents are descending.
Likewise, the cells containing $k$ are being read strictly from left to right, weakly from bottom to top so their contents are increasing.

When $T$ is standard with $|T| = p$, we see each letter is in the positive part of its own parenthesization so $\res(T) = (\bfa,\bfi)$ with $\bfi = (1,\dots,p)$.
Then $T \mapsto \bfa$ is restriction of $\res$ to a map from $\ShSet_p(\lambda/\mu)$ to $\N^p$.
In this guise, $\res$ generalizes the aforementioned bijection from linear extensions of the heap to reduced words of $w_{\lambda/\mu}$.
\end{definition}

We will show for $T \in \ShSSYT(\lambda/\mu)$ that $\res(T)$ is lies in $\UU(w_{(\lambda/\mu)})$.
For example, with
\[
T = 
\begin{ytableau}
1_{\color{red}{0}} & 1_{\color{red}{1}} & 1_{\color{red}{2}} & 2'_{\color{red}{3}} & 3'_{\color{red}{4}} & 5'_{\color{red}{5}}\\
\none & 2_{\color{red}{0}} & 2_{\color{red}{1}} & 3'_{\color{red}{2}} & 3 4'_{\color{red}{3}}\\
\none & \none & 4_{\color{red}{0}} & 4_{\color{red}{1}} & 4_{\color{red}{2}}\\
\none & \none & \none & 5_{\color{red}{0}} & 5_{\color{red}{1}}
\end{ytableau}\,, \quad  \res(T) = (\mid 012)(3\mid 01)(42\mid 3)(3\mid 012)(5\mid 01).
\]
Here, the contents are depicted using red subscripts inside of $T$ for reference.

We require a simple lemma about $0$--Hecke expressions of fully commutative elements, which we phrase for top elements.
\begin{lemma}
    \label{l:top-word}
    For $\lambda/\mu$ a skew shifted shape, let $\bfa = (a_1,\dots,a_p) \in \HH(w_{(\lambda/\mu)})$, and let $a_i = a_\ell = m$ so that $m \notin \{a_{i+1},\dots,a_{\ell-1}\}$.
    Then 
    \[
    s_{a_1} \circ \dots \circ s_{a_{\ell- 1}} = s_{a_1} \circ \dots \circ s_{a_{\ell- 1}} \circ s_{a_\ell}
    \]
    if and only if for $j \in (i,\ell)$, we have $a_j \neq m+1,m-1$.

\end{lemma}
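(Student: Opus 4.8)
The plan is to analyze the $0$--Hecke product $s_{a_1} \circ \dots \circ s_{a_{\ell-1}}$ by tracking whether the generator $s_m$ is ``absorbed'' when we multiply on the right by $s_{a_\ell} = s_m$. Since $w_{(\lambda/\mu)}$ is fully commutative and moreover top, its reduced words avoid the consecutive subwords $(0,1,0,1)$, $(1,0,1)$ and $(i,i+1,i)$ for $i \geq 1$; in particular any $0$--Hecke expression for it, after cancelling idempotent repetitions, inherits strong constraints on which letters can appear between two consecutive occurrences of $m$. The key structural fact I would use is that for a fully commutative element, the letters equal to $m$ in any Hecke word, read in order, correspond to a chain in the heap, and the letters equal to $m\pm 1$ interleave with them in a rigidly controlled way (each $m$ in the heap covers and is covered by neighbors with residues $m\pm 1$). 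So the first step is to set up this heap picture precisely, using Proposition~\ref{p:top-heap} to identify the heap with $D_{\lambda/\mu}$ and using the residue description (content $= j - i$) of the canonical reduced word.

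Next I would prove the two directions separately. For the ``if'' direction: suppose $a_j \neq m+1, m-1$ for all $j \in (i,\ell)$. Then in the product $s_{a_{i}} \circ \dots \circ s_{a_{\ell-1}}$, the generator $s_m$ (contributed at position $i$) commutes past every subsequent factor $s_{a_{i+1}}, \dots, s_{a_{\ell-1}}$, since each of those is either $s_m$ itself (which it absorbs, by $s_m \circ s_m = s_m$) or an $s_{a_j}$ with $|a_j - m| \geq 2$ (which commutes with $s_m$). Hence $s_{a_i} \circ \dots \circ s_{a_{\ell-1}}$ already has a trailing $s_m$ available to absorb the factor $s_{a_\ell} = s_m$, giving the claimed equality of products. (One has to be slightly careful that the factors at positions $< i$ don't interfere; but they are multiplied on the left, so the absorption on the right is unaffected.) For the ``only if'' direction: suppose some $a_j = m+1$ or $m-1$ with $j \in (i,\ell)$, and choose $j$ minimal with this property. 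I would argue that then the length strictly increases when we append $s_m$, i.e. $s_{a_1} \circ \dots \circ s_{a_{\ell-1}} \circ s_m \neq s_{a_1} \circ \dots \circ s_{a_{\ell-1}}$. The cleanest way is a length/inversion argument: appending $s_m$ to a signed permutation $v$ fails to increase length exactly when $v$ already has the appropriate descent at position determined by $m$; I would show that the presence of an intervening $m\pm 1$ and the absence (between positions $i$ and $\ell$) of any further $m$ forces $v = s_{a_1} \circ \dots \circ s_{a_{\ell-1}}$ not to have that descent, by again commuting the relevant factors and reducing to a short word in $s_{m-1}, s_m, s_{m+1}$ (or $s_0, s_1$ in the $m = 1$ boundary case) whose $0$--Hecke product is computed by hand. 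The fully commutative / top hypothesis is what rules out, e.g., a subword $(m, m+1, m, \ldots)$ that could reintroduce an $s_m$ and spoil the argument.

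The main obstacle I anticipate is the bookkeeping in the ``only if'' direction: one must show that a single intervening occurrence of $m \pm 1$ genuinely obstructs absorption, and the subtlety is that the element could, a priori, still have the right descent for other reasons coming from letters far from $m$. The top/fully-commutative hypothesis is essential here and I expect the proof will reduce, after commuting away all letters $a_j$ with $|a_j - m| \geq 2$, to verifying the statement for words in the three-letter alphabet $\{m-1, m, m+1\}$ (respectively $\{0,1\}$ when $m = 1$, where the long braid relation is relevant) that contain no braid patterns and exactly the prescribed occurrences of $m$; this is a finite check. A secondary technical point is handling the boundary case $m = 1$, where $s_1$ interacts with $s_0$ via the long braid relation $s_1 s_0 s_1 s_0 = s_0 s_1 s_0 s_1$ rather than the ordinary braid relation, but the ``top'' condition (no $(1,0,1)$ and no $(0,1,0,1)$) is precisely tailored to keep this case under control, so the same commutation-plus-finite-check strategy should go through.
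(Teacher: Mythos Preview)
Your ``if'' direction is correct and coincides with the paper's converse: commute $s_{a_\ell}=s_m$ leftward past the letters $a_{i+1},\dots,a_{\ell-1}$ (all of which commute with $s_m$) until it reaches $s_{a_i}=s_m$ and is absorbed by the idempotent relation.

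For the ``only if'' direction, however, the paper's argument is both different and much shorter than your proposed reduction. The paper invokes a single structural fact: for a fully commutative element $v$, any two $0$--Hecke expressions are connected using only commutation and idempotent moves (no braid moves are ever needed). Note that the partial product $v = s_{a_1}\circ\cdots\circ s_{a_{\ell}}$ is fully commutative since $v\leq_R w_{(\lambda/\mu)}$ in right weak order. If the equality in the lemma holds, then $(a_1,\dots,a_{\ell-1})$ and $(a_1,\dots,a_\ell)$ are both Hecke words for $v$, so one passes between them by commutation and idempotent moves. The only way to delete the terminal $a_\ell=m$ is to make it adjacent to another $m$ and cancel; since $a_i$ is the nearest $m$ to its left, this forces every intermediate $a_j$ to commute with $s_m$, i.e.\ $a_j\neq m\pm 1$.

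Your proposed route---reduce to the alphabet $\{m-1,m,m+1\}$ by ``commuting away'' far letters and then do a finite check---has a genuine gap at exactly the point you flagged. You cannot in general push all letters with $|a_j - m|\geq 2$ out of the segment: for instance $s_{m+2}$ does not commute with $s_{m+1}$, so an $(m+1,\,m+2,\,m+1)$ pattern blocks the rearrangement. More fundamentally, whether $v = s_{a_1}\circ\cdots\circ s_{a_{\ell-1}}$ has a right descent at $m$ is a global property of $v$, and letters far from $m$ \emph{can} produce such a descent via braids (e.g.\ the prefix $(m+1,m,m+1)$ already gives $s_m s_{m+1} s_m$, which has a descent at $m$). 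What rules out such phenomena is precisely the full commutativity of every partial product, and the clean way to exploit that is the ``commutation $+$ idempotent connectivity'' fact above, not a three-letter reduction.
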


\begin{proof}
    For the forward direction, since all $0$--Hecke expressions are connected by the Commutation and Idempotent relations, we see we must be able to commute $a_\ell$ up to $a_i$ and cancel, hence no intermediate values can be present to prevent this commutation.
    For the converse, if no such $a_j$ exists then we can commute $s_{a_\ell}$ up to $s_{a_i}$ and apply the Idempotent relation.
\end{proof}

\begin{theorem}
\label{t:res}    
    The map $\res:\ShSSYT(\lambda/\mu) \to{} \UU(w_{(\lambda/\mu)})$ is a weight preserving bijection, that is for $\res(T) = (\bfa,\bfi)$ we have $x^T = x^\bfi$.
\end{theorem}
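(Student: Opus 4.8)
The plan is to establish weight preservation, then well-definedness (that $\res(T)$ indeed lands in $\UU(w_{(\lambda/\mu)})$), then bijectivity, bootstrapping everything from the classical bijection between shifted standard Young tableaux of shape $\lambda/\mu$ and reduced words for $w_{(\lambda/\mu)}$ due to Stembridge. Weight preservation is immediate: by construction the $k$th parenthesization of $\res(T)=(\bfa,\bfi)$ contributes one letter for each cell whose set contains $\overline{k}$ and one for each cell whose set contains $k$, so the number of positions $j$ with $|i_j|=k$ equals the multiplicity of $k$ among the entries of $T$, which is the exponent of $x_k$ in $x^T$; hence $x^\bfi=x^T$. That $(\bfa,\bfi)$ is a unimodal factorization was already noted after Definition~\ref{d:res}. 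It therefore remains to show $\bfa\in\HH(w_{(\lambda/\mu)})$ and that $\res$ is a bijection, and I would prove both at once by induction on the excess $\ex(T):=|T|-|\lambda/\mu|=\sum_{(i,j)}(|T_{ij}|-1)$.

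In the base case $\ex(T)=0$ every cell of $T$ is a singleton, so $T$ is an ordinary shifted semistandard tableau. The key claim is that the reading order used by $\res$ refines the heap order on $D_{\lambda/\mu}$: if $(i,j)$ lies weakly northwest of $(i',j')$, then chasing the semistandard inequalities rightward along row $i$ and then downward along column $j'$ --- which uses the elementary fact that the cell in row $i$ and column $j'$ again lies in $D_{\lambda/\mu}$ --- shows the entry of $(i,j)$ has absolute value at most that of $(i',j')$, and that in the case of equality the cell $(i,j)$ is still read first. Hence $\bfa$ is a linear extension of the heap and so, by Stembridge's theorem, a reduced word for $w_{(\lambda/\mu)}$, while $\bfi$ records precisely the data of a standardization of $T$; this identifies the restriction of $\res$ with the classical bijection, refined to a bijection onto $\UU_{|\lambda/\mu|}(w_{(\lambda/\mu)})$.

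For the inductive step suppose $\ex(T)>0$, so some cell has at least two entries. I would single out a cell $(i,j)$ of size at least $2$ and an entry $v$ of it by a suitable canonical rule --- for instance, $v$ the $\prec$-largest value that is the maximum of some cell of size at least $2$, and $(i,j)$ that cell --- and delete $v$ to obtain $T^-\in\ShSSYT(\lambda/\mu)$ with $\ex(T^-)=\ex(T)-1$. On the level of words, $\res(T^-)$ is obtained from $\res(T)$ by deleting one letter, of value $m=j-i$; the crux is to check that this letter is flanked in $\bfa$ by another letter equal to $m$ with no letter equal to $m$, $m-1$, or $m+1$ strictly between them. Granting this, Lemma~\ref{l:top-word} shows the deletion does not change the $0$--Hecke product, so $s_{a_1}\circ\cdots\circ s_{a_p}=w_{(\lambda/\mu)}$ by the inductive hypothesis applied to $T^-$. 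Reading this bookkeeping backwards --- given $(\bfa,\bfi)\in\UU(w_{(\lambda/\mu)})$ with $|\bfa|>|\lambda/\mu|$, use Lemma~\ref{l:top-word} to locate a removable letter, apply the inductive hypothesis to the shortened factorization to recover a tableau, and reinsert the corresponding value --- produces a two-sided inverse, giving injectivity and surjectivity at once.

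The hard part is the flanking claim in the inductive step. Because $\res$ may read the two entries of a single two-element cell into widely separated positions of $\bfa$, and because several cells can lie on a common diagonal, checking that no letter equal to $m\pm1$ occurs in the intervening segment of the reading word requires a careful description of exactly which cells of $D_{\lambda/\mu}$ contribute to that segment and of how the semistandard constraints tie their entries to $v$ and to its neighbour in the chosen cell. Pinning down the canonical choice of $(i,j)$ and $v$ so that this verification goes through, and so that the forward and backward reductions are genuinely mutually inverse, is where the substance of the proof lies.
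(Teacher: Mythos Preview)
Your approach is sound and close in spirit to the paper's, but the two proofs are organized differently, and the part you flag as hard is not the one that actually needs care.

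The paper first applies standardization to reduce to the case $T\in\ShSet_p(\lambda/\mu)$ (standard set-valued), and then handles that case all at once: the minimal entries of the cells of $T$ form a shifted standard Young tableau whose $\res$-image is a reduced word for $w_{(\lambda/\mu)}$, and every remaining value $r$ lies in an outer corner of $T\mid_{[r-1]}$, so the content letters $a_r\pm 1$ cannot occur between the two nearest occurrences of $a_r$; Lemma~\ref{l:top-word} then gives $\bfa\in\HH_p(w_{(\lambda/\mu)})$. The inverse is built in one stroke by reading off the ``length-increasing'' positions of $\bfa$ to get a reduced word, mapping that to a standard tableau, and then reinserting the remaining letters cell by cell. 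Finally standardization is undone.

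Your route is the transpose: induction on excess reduces to ordinary (non-set-valued) semistandard tableaux, and then standardization handles that base case. This works, but two remarks. First, the ``flanking claim'' you call the hard part is in fact routine: entries along any fixed diagonal are strictly $\prec$-ordered, and for consecutive entries $u\prec v$ of a single cell $(i,j)$ the column and row inequalities force $\max T_{i-1,j},\max T_{i,j-1}\preceq u$ and $v\preceq\min T_{i,j+1},\min T_{i+1,j}$, so every letter of content $m$ or $m\pm1$ other than those coming from $u$ and $v$ is read either before $u$ or after $v$. Second, the genuine work in your scheme is the inverse: you must choose the removable letter in $(\bfa,\bfi)$ so that (a) the shortened biword is still a unimodal factorization landing in $\UU(w_{(\lambda/\mu)})$, (b) reinserting the value into the tableau produced by induction is semistandard, and (c) this choice matches your canonical forward deletion. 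The paper's ``standardize first'' order makes this bookkeeping considerably cleaner, since one only ever reinserts into a standard set-valued tableau at an outer corner.
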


\begin{proof}
    Note the power of $x_k$ in $x^T$ is the number of $\overline{k}$ and $k$'s in $T$, while the power of $x_k$ in $x^\bfi$ is the size of the $k$th parenthesization in $(\bfa,\bfi)$.
    By construction, these quantities are the same so $\res$ is weight preserving.

    To see $\res$ is a bijection, we first treat the case where $T \in \ShSet_p(\lambda/\mu)$.
    In this case 
    \[
    \res(T) = (\bfa,\bfi = (1,2,\dots,p))
    \]
    by construction, so we must show the restriction $T \mapsto \bfa$ is a bijection from $\ShSet_p(\lambda/\mu)$ to $\HH_p(w_{\lambda/\mu})$.
    We first show this restriction is well-defined.
    Let $m_1 < \dots < m_k$ be the minimum values of the cells of $T$, occurring in cells $(i_1,j_1), \dots (i_k,j_k)$.
    Then the subword
    \[
    (a_{m_1},\dots,a_{m_k}) = (j_1 - i_1,\dots,j_k-i_k)
    \]
    is a reduced word for $w_{\lambda/\mu}$ since $\res$ restricts to a bijection from standard tableaux to reduced words.
    For $r \in [p] \setminus \{m_1,\dots,m_k\}$ in cell $(i,j)$ also containing $m_\ell$, we see the restriction $T \mid_{[r]}$ cannot contain cells $(i,j+1)$ or $(i+1,j)$.
    Therefore, in $\res(T) = (a_1,\dots,a_p)$ we have $a_r = j-i  = a_{m_\ell}$ and all intermediate values $a_{m_\ell + 1}, \dots, a_{r-1}$ cannot be $j-i-1, j-i+1$ as these would be drawn from cells $(i,j-1)$ and $(i-1,j)$, respectively, violating the semistandard condition.
    By Lemma~\ref{l:top-word}, we see 
    \[
    s_{a_1} \circ \dots \circ s_{a_{r-1}} = s_{a_1} \circ \dots \circ s_{a_{r-1}} \circ s_{a_r},
    \]
    so $\res(T) \in \HH_p(w_{\lambda/\mu})$.

    We now demonstrate an inverse map for the restriction $T \mapsto \bfa$.
    For $\bfa \in \HH_p(w_{\lambda/\mu})$, let $m_1< \dots < m_k$ be all values so that
    \[
    s_{a_1} \circ \dots \circ s_{m_1 - 1} < s_{a_1} \circ \dots \circ s_{m_1 - 1} \circ s_{m_1}.
    \]
    Then $\bfa' = (a_{m_1},\dots,a_{m_k})$ is a reduced word for $w_{\lambda/\mu}$.
    Since $\res$ restricted to standard tableaux and reduced words is a bijection, we see $\bfa'$ is the image of some $T \in \SYT(\lambda/\mu)$ under $\res$ with $i$th entry replaced with $m_i$.
    We fill in the rest of $T$ iteratively so that it is set-valued standard.
    For $\ell$ minimal amongst all values not yet in $T$, let $m_i$ be maximal so that $m_i < \ell$  and $a_{m_i} = a_\ell$.
    By Lemma~\ref{l:top-word}, we see for $m_i < j < \ell$ that $a_j \neq a_\ell \pm 1$ so $m_i$ is in an outer corner of $T\mid_{[\ell-1]}$.
    This means we can add $\ell$ to the cell containing $m_i$ while preserving the standard condition, hence this process terminates with $T \in \ShSet_p(\lambda/\mu)$.

    To upgrade to unimodal factorizations, first observe for $\res(T) = (\bfa,\bfi)$ that $\st(T) \mapsto \bfa$ by the definition of $\res$.
    Therefore $\res: \ShSSYT(\lambda/\mu) \to \UU(w_{\lambda/\mu})$ as desired.
    For the inverse, let $T' = \res^{-1}(\bfa)$.
    Replacing the elements corresponding to $\bfa^k$ with $\overline{k}$, the descending condition guarantees they will be row strict and column weak.
    Likewise, when replacing the elements corresponding to $\bfb^k$ with $k$, the ascending condition guarantees they will be column strict and row weak.
    Therefore, $\res$ is invertible, hence the result follows.
\end{proof}

Our motivation for proving this is the following:

\begin{corollary}
    \label{c:skew-GQ}
    For $\mu \subseteq \lambda$ strict shapes, $G^C_{w_{\lambda/\mu}} = GQ^{(\beta)}_{\lambda/\mu}$.
\end{corollary}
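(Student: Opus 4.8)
The plan is to combine the two results already in hand. By Theorem~\ref{t:res}, the residue map $\res:\ShSSYT(\lambda/\mu) \to \UU(w_{\lambda/\mu})$ is a weight-preserving bijection, meaning that for $\res(T) = (\bfa,\bfi)$ we have $x^T = x^\bfi$. Crucially, $\res$ also tracks degree correctly: a tableau $T \in \ShSSYT(\lambda/\mu)$ has $|T| - |\lambda/\mu|$ ``excess'' entries, and the corresponding unimodal factorization $(\bfa,\bfi)$ has word $\bfa \in \HH_p(w_{\lambda/\mu})$ with $p = |T|$, so $p - \ell(w_{\lambda/\mu}) = |T| - |\lambda/\mu|$ since $\ell(w_{\lambda/\mu}) = |\lambda/\mu|$ (the canonical reduced word of the top element has length equal to the number of cells of its heap).

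First I would write out both generating functions over the same index set. By Proposition~\ref{p:unimodal_GC},
\[
G^C_{w_{\lambda/\mu}}(\bfx) = \sum_{p \geq 0} \sum_{(\bfa,\bfi) \in \UU_p(w_{\lambda/\mu})} \beta^{p - \ell(w_{\lambda/\mu})} x^\bfi,
\]
and by the defining formula~\eqref{eq:GQ} together with the observation that $\ShSet^*(\lambda/\mu)$ is exactly $\ShSSYT(\lambda/\mu)$ in the paper's notation,
\[
GQ^{(\beta)}_{\lambda/\mu} = \sum_{T \in \ShSSYT(\lambda/\mu)} \beta^{|T| - |\lambda/\mu|} x^T.
\]
Then I would apply the bijection $\res$ from Theorem~\ref{t:res} term by term: it matches each $T$ with a unique $(\bfa,\bfi) \in \UU(w_{\lambda/\mu})$, it satisfies $x^T = x^\bfi$, and it satisfies $|T| - |\lambda/\mu| = p - \ell(w_{\lambda/\mu})$ where $p$ is the length of $\bfa$. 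Hence the two sums agree monomial by monomial, giving $G^C_{w_{\lambda/\mu}} = GQ^{(\beta)}_{\lambda/\mu}$.

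The only point that needs a sentence of care is the identification of degrees, i.e.\ that $\ell(w_{\lambda/\mu}) = |\lambda/\mu|$; this follows because $w_{\lambda/\mu}$ is fully commutative with heap $D_{\lambda/\mu}$ (Proposition~\ref{p:top-heap}), so its reduced words have length equal to the number of heap elements, namely $|\lambda/\mu|$. Everything else is bookkeeping. I do not anticipate a genuine obstacle here: the substantive work was done in Theorem~\ref{t:res}, and this corollary is simply the translation of that bijection through the two tableau-generating-function descriptions of $GQ^{(\beta)}_{\lambda/\mu}$ and $G^C_{w}$.

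\begin{proof}
By Proposition~\ref{p:top-heap}, $w_{\lambda/\mu}$ is fully commutative with heap isomorphic to $D_{\lambda/\mu}$, so every reduced word for $w_{\lambda/\mu}$ has length $|\lambda/\mu|$; that is, $\ell(w_{\lambda/\mu}) = |\lambda/\mu|$. Combining Proposition~\ref{p:unimodal_GC} with Theorem~\ref{t:res}, the weight-preserving bijection $\res:\ShSSYT(\lambda/\mu) \to \UU(w_{\lambda/\mu})$ sends $T$ to $(\bfa,\bfi)$ with $x^T = x^\bfi$ and with $|T|$ equal to the length of $\bfa$, so that $|T| - |\lambda/\mu| = |T| - \ell(w_{\lambda/\mu})$ equals the power of $\beta$ contributed by $(\bfa,\bfi)$ in Proposition~\ref{p:unimodal_GC}. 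Therefore
\[
G^C_{w_{\lambda/\mu}} = \sum_{(\bfa,\bfi) \in \UU(w_{\lambda/\mu})} \beta^{\,|\bfa| - \ell(w_{\lambda/\mu})} x^\bfi = \sum_{T \in \ShSSYT(\lambda/\mu)} \beta^{\,|T| - |\lambda/\mu|} x^T = GQ^{(\beta)}_{\lambda/\mu},
\]
where the last equality is~\eqref{eq:GQ}.
\end{proof}
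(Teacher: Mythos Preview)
Your proof is correct and takes essentially the same approach as the paper: combine Proposition~\ref{p:unimodal_GC}, Theorem~\ref{t:res}, and the definition~\eqref{eq:GQ}. You have simply filled in the bookkeeping (notably the justification that $\ell(w_{\lambda/\mu}) = |\lambda/\mu|$) that the paper leaves implicit in its one-line proof.
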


\begin{proof}
    Combine Proposition~\ref{p:unimodal_GC} with Theorem~\ref{t:res} and the definition~\eqref{eq:GQ} of $GQ^{(\beta)}_{\lambda/\mu}$. 
\end{proof}

This result generalizes to products for the form $GQ$ times $GS$, and more generally $G^C$ times $GS$ as follows.
Let $\rho$ be a strict partition with $\ell(\rho) = k$ and $\rho_k > n$.
For $\mu = (n+k-1,n+k-2,\dots,n)$, the $C$ tableau construction shows the minimum entry in a reduced word for $w_{(\lambda/\mu)}$ is $n$ so it fixes $i < n$.
By Corollary~\ref{c:skew-GQ}, we see $G^C_{w(\rho/\mu)} = GS_\nu$ where $\nu$ is the (not strict) partition with $i$th entry $\rho_i +i - n- k$.
Therefore, by Proposition~\ref{p:signed-cross-ordinary} for $w \in W_n$ we have
\begin{equation}
    \label{eq:GC-GS}
G^C_{w\cdot w(\lambda/\mu)} = G^C_w \cdot GS_\nu
\end{equation}
We highlight an important special case:
\begin{corollary}
    \label{c:GQ-GS}
    Let $\lambda$ be a strict partition with $\lambda_1 < a$ and $\nu$ be a (not necessarily) strict partition with $\ell(\nu) = k$.
    Define $\rho$ so that $\nu_i = \rho_i + i - a - k$, and let $\mu = (a+k-1,a+k-2,\dots,a)$.
    Then
    \[
G^C_{w(\lambda) \cdot w(\rho/\mu)} = GQ_{\lambda} \cdot GS_\nu.
    \]
\end{corollary}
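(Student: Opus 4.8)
The plan is to obtain Corollary~\ref{c:GQ-GS} as the instance $w = w(\lambda)$ of the identity~\eqref{eq:GC-GS}, combined with the known evaluation $G^C_{w(\lambda)} = GQ_\lambda$ for a Grassmannian signed permutation. So the only work beyond invoking those two facts is checking that the hypotheses line up, which is bookkeeping with generator supports.

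First I would verify that $\rho$ is a legitimate input. From $\rho_i = \nu_i + (a+k-i)$ and $\nu_1 \ge \nu_2 \ge \cdots \ge \nu_k \ge 1$ (the last inequality because $\ell(\nu)=k$) we get $\rho_i - \rho_{i+1} = (\nu_i - \nu_{i+1}) + 1 \ge 1$ and $\rho_k = \nu_k + a \ge a+1$; hence $\rho$ is a strict partition with $\ell(\rho) = k$ and $\rho_k > a$. Moreover the relation $\nu_i = \rho_i + i - a - k$ is exactly the one identifying the shifted skew diagram $D_{\rho/\mu}$, for $\mu = (a+k-1,a+k-2,\ldots,a)$, with the straight diagram of $\nu$, so that $GS_\nu = GQ_{\rho/\mu}$ by definition.

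Next I would check that Proposition~\ref{p:signed-cross-ordinary} applies to $u = w(\lambda)$ and $v = w(\rho/\mu)$. The Grassmannian element $w(\lambda)$ uses only the generators $s_0,\ldots,s_{\lambda_1-1}$, so $w(\lambda)\in W_{\lambda_1}$. On the other hand, the canonical reduced word of the top element $w(\rho/\mu)$ is the sequence of contents of the cells of $D_{\rho/\mu}$, and its minimum is the content $\mu_k = a$ of the leftmost cell of row $k$ (since $a$ is the smallest part of $\mu$); hence every reduced word of $w(\rho/\mu)$ uses only $s_a, s_{a+1},\ldots$, so $w(\rho/\mu)$ lies in the parabolic subgroup $\langle s_a, s_{a+1},\ldots\rangle$. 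In particular $w(\rho/\mu)$ is an (unsigned) permutation fixing the positions $1,\ldots,a-1$, and the strict inequality $\lambda_1 < a$ guarantees it fixes $[\lambda_1]$; in fact every generator of $w(\lambda)$ commutes with every generator of $w(\rho/\mu)$, so the product $w(\lambda)\cdot w(\rho/\mu)$ is length additive and its group and $0$--Hecke versions coincide. Then Proposition~\ref{p:signed-cross-ordinary} gives $G^C_{w(\lambda)\cdot w(\rho/\mu)} = G^C_{w(\lambda)}\cdot G^C_{w(\rho/\mu)}$. Finally $G^C_{w(\rho/\mu)} = GS_\nu$ by Corollary~\ref{c:skew-GQ} together with the identification above, and $G^C_{w(\lambda)} = GQ_\lambda$ is the Grassmannian case proved in~\cite{kirillov2017construction} (it is also a special case of Theorem~\ref{t:full-commutative}); substituting yields $G^C_{w(\lambda)\cdot w(\rho/\mu)} = GQ_\lambda\cdot GS_\nu$, as claimed.

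I do not expect a genuine obstacle: essentially all the content already sits inside~\eqref{eq:GC-GS} and Proposition~\ref{p:signed-cross-ordinary}. The one place to be careful is precisely the index bookkeeping — confirming that the generator support $\{0,\ldots,\lambda_1-1\}$ of $w(\lambda)$ lies "to the left of," and separated by a gap from, the support $\{a,a+1,\ldots\}$ of $w(\rho/\mu)$, which is why one needs $\lambda_1 < a$ rather than merely $\lambda_1 \le a$ — together with the routine checks that $\rho$ is a strict partition of the right length and that the staircase $\mu$ is chosen so $D_{\rho/\mu}$ really is the straight shape $\nu$.
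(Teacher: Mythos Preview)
Your proposal is correct and follows exactly the paper's approach: the corollary is stated immediately after~\eqref{eq:GC-GS} as its specialization to $w=w(\lambda)$, using Proposition~\ref{p:signed-cross-ordinary}, Corollary~\ref{c:skew-GQ}, and the Grassmannian identity $G^C_{w(\lambda)}=GQ_\lambda$. Your added verification that $\rho$ is strict with $\rho_k>a$ and that the generator supports are disjoint (so the hypotheses of Proposition~\ref{p:signed-cross-ordinary} are met with $w(\lambda)\in W_{\lambda_1}$ and $w(\rho/\mu)$ fixing $[\lambda_1]$) is exactly the bookkeeping the paper leaves implicit.
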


\section{Strict Decomposition Tableaux}
\label{ss:strict-tableaux}

In this section, we introduce a notion of strict deomposition tableaux, simultaneously generalizing the reduced decomposition tableaux appearing in~\cite{kraskiewcz1989reduced,lam1995b} and the standard decomposition tableaux appearing in~\cite{serrano2010shifted}\footnote{Confusingly, reduced decomposition tableaux are referred to as `standard' in~\cite{lam1995b}. We introduce the term `reduced' as the Serrano tableaux more closely resemble the conventional meaning of `standard'.}.
In order to define strict decomposition tableau, we require some preliminaries.

\begin{definition}
\label{d:unimodal}
A sequence $R = r_1 r_2 \dots r_k$ of natural numbers is a \defn{(strictly) unimodal} if there exists $j \in [k]$ so that 
\[
r_1 > r_2 > \dots > r_j < \dots < r_k.
\]
Depending on context, we refer to both $j$ and $r_j$ as the \defn{dip} of $R$.
The \defn{decreasing} and \defn{increasing parts} of $R$ are $R^\downarrow = r_1 \dots r_{j-1}$ and $R^\uparrow = r_{j} \dots r_k$, respectively.
Note the increasing part is always non-empty, which is at odds with the conventions in~\cite{lam1995b}.
\end{definition}

For shifted tableau with a unimodal row $R$ in row $i$ with dip $j$, it is frequently convenient to view it as
\begin{align*}
\mathsf{T}(R) &:= -r_i < \dots < -r_{j-1} < r_j < r_{j+1} < \dots < r_k,\ \text{or}\\
\mathsf{B}(R) &:= -r_i < \dots < -r_{j-1} < -r_j < r_{j+1} \dots < r_k,
\end{align*}
which are increasing sequences of integers.

\begin{remark}
Often, unimodal sequences are first increasing, then decreasing.
However, the previous observation demonstrates that working with sequences that first decrease, then increase is more natural in our context.
For simplicity, we use the term unimodal throughout, as opposed to `counimodal'.
\end{remark}

We are now prepared to define the main object of this section.

\begin{definition}
\label{d:strict-decomposition}
    Let $\lambda$ be a strict partition with $\ell(\lambda) = k$.
    A \defn{strict decomposition tableau} is 
    a tableau $T:D_\lambda \rightarrow \mathbb{N}$ with rows $R_1 \dots R_k$ so that:
    \begin{enumerate}[(a)]
        \item For all $i \in [k]$, the row $R_i$ is unimodal.
        \item For $i \in [k-1]$, the first and last entries of $R_{i+1}$ are less than the first entry of $R_i$.
        \item \label{sdt.c} For $i \in [k-1]$, consider the increasing sequences $\mathsf{T}(R_i) = a_i \dots a_{i+\lambda_i}$ and $\mathsf{B}(R_{i+1}) = b_{i+1} \dots b_{i+1+\lambda_{i+1}}$.
        For all $j \in [\lambda_{i+1}]$,
        \begin{equation}
        \label{eq:row-rule}
        \{ \pm a_i, \dots, \pm a_{j-1},\pm b_{j+1},\dots,\pm b_{i+1+\lambda_{i+1}}\} \cap (b_j,a_{j}] = \varnothing.    
        \end{equation}
    \end{enumerate}
    In Condition (c), note that $a_{j}$ appears immediately above $b_j$ in $T$. If $a_{j} \leq b_j$, then we will have that $(b_j,a_{j}] = \varnothing$, so Equation~\eqref{eq:row-rule} is satisfied vacuously.
If $x$ is an element of the set defined in the LHS of Equation~\eqref{eq:row-rule}, we say $x$ \defn{witnesses} $b_j < a_{j+1}$, hence the failure of Condition (\ref{sdt.c}).\\

\end{definition}

\begin{lemma}\label{lem:config}
    A shifted tableau with unimodal rows is a strict decomposition tableau if and only if the tableau avoids the following five configurations:
\[
(i)\;\begin{ytableau}
    a & \cdots & \\
    \none & \cdots & b
\end{ytableau}\,,\ 
(ii)\;\begin{ytableau}
    \; & \cdots & a & \cdots & \\
    \none & \cdots & c & \cdots & b
\end{ytableau}\,,\ 
(iii)\;\begin{ytableau}
    \; & \cdots & v & z & \cdots & \\
    \none & \cdots & \cdots & x & \cdots & y
\end{ytableau}\,,\ 
(iv)\;\begin{ytableau}
    y & \cdots & z\\
    \none & \cdots & x
\end{ytableau}\,,\ 
(v)\;\begin{ytableau}
    \; & \cdots & y & \cdots & z\\
    \none & \cdots & \; & \cdots & x
\end{ytableau}
\]
with $a\leq b < c$, $x < y \leq z$, and $v < z$.
\end{lemma}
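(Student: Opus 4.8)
The plan is to show that each of the five forbidden configurations $(i)$–$(v)$ corresponds exactly to a way that Condition (c) of Definition~\ref{d:strict-decomposition} (or the implicit Condition (b), which is subsumed) can fail, and conversely that every failure of Condition (c) produces one of these five pictures. The key observation is that the set appearing on the left-hand side of~\eqref{eq:row-rule} is built from the entries $\pm a_i,\dots,\pm a_{j-1}$ lying to the \emph{left} of $a_j$ in row $i$ together with the entries $\pm b_{j+1},\dots$ lying weakly to the \emph{right} of $b_j$ in row $i+1$, so a witness $x$ of a failure is either a row-$i$ entry northwest (or directly north) of the pair $(a_j,b_j)$, or a row-$(i+1)$ entry southeast (or directly south) of it. Translating ``$x \in (b_j,a_j]$'' into the tableau picture, using that $a_j$ sits immediately above $b_j$, is what yields the decreasing/increasing-part shapes drawn in the five configurations.

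First I would fix notation: given adjacent rows $R_i, R_{i+1}$, write $\mathsf{T}(R_i)=a_i\cdots a_{i+\lambda_i}$ and $\mathsf{B}(R_{i+1})=b_{i+1}\cdots b_{i+1+\lambda_{i+1}}$ as in Definition~\ref{d:strict-decomposition}, recalling that passing to $\mathsf{T}$ and $\mathsf{B}$ just records the decreasing part of each unimodal row with negative signs; a cell in the tableau holds a value $r$ whose signed version is $-r$ if it lies in the decreasing part of its row and $+r$ if in the increasing part. Then I would show Condition (b) is automatically implied by Condition (c) together with unimodality (or argue directly that (b) is equivalent to avoiding configuration $(i)$ with $a=b$, and that when $a<b$ configuration $(i)$ is itself a special/limiting case of the others), so that ``strict decomposition tableau'' is equivalent to ``unimodal rows plus Condition (c)''. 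This lets me reduce the lemma to the single claim: Condition (c) holds between rows $i$ and $i+1$ iff none of $(i)$–$(v)$ occurs there.

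Next comes the case analysis, which I expect to be the bulk of the work. Fix $j$ and suppose~\eqref{eq:row-rule} fails with witness $x$. There are two choices (the witness comes from row $i$, i.e.\ $x=\pm a_t$ with $t<j$, or from row $i+1$, i.e.\ $x=\pm b_t$ with $t>j$), and within each, the witnessing value and the pair $(a_j,b_j)$ can each be positive or negative, i.e.\ lie in the increasing or decreasing part of their row. Sorting these sign patterns out, and drawing where the cells fall relative to the dips of the two rows, produces precisely five distinct pictures up to the inequalities $a\le b<c$, $x<y\le z$, $v<z$: configuration $(ii)$ and $(iii)$ from a row-$(i+1)$ witness, $(iv)$ and $(v)$ from a row-$i$ witness, with $(i)$ the degenerate overlap case. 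For each sign pattern I would exhibit the inequalities forced by ``$x\in(b_j,a_j]$'' and by unimodality, and check they match the labelled inequalities in the statement; conversely, given one of the five configurations with its inequalities, I would read off the offending index $j$ and the witness $x$ and verify $x\in(b_j,a_j]$ and that $x$ appears in the LHS set of~\eqref{eq:row-rule}. The main obstacle is bookkeeping: keeping straight, across all the sign combinations, which cell is ``$a_j$,'' which is ``$b_j$,'' and which is the witness, and confirming that the five pictures drawn are exhaustive and non-redundant — in particular that configurations where the witness lies in the increasing part of row $i$ but northwest of $a_j$, versus directly north, genuinely collapse to configurations $(iv)$ and $(v)$ respectively, and that no sixth picture is hiding in the $\pm$ bookkeeping.
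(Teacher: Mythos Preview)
Your approach is essentially the paper's: both directions are handled by the same case analysis on the signs of $a_j,b_j$ and the row of the witness. Two points where your outline diverges from what actually happens:

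First, the dichotomy ``witness in row $i{+}1$ gives $(ii)/(iii)$, witness in row $i$ gives $(iv)/(v)$'' is too clean. In the paper's proof, when both $a_j$ and $b_j$ lie in the decreasing parts and the witness $w$ lies in row $i$, one does \emph{not} land in $(iv)$ or $(v)$: instead one looks at the cell $t$ directly below $w$ and exhibits the triple $(w,p,t)$ as an instance of configuration $(ii)$ (or $(i)$ if $t$ is absent). Similarly, when $b_j$ is in the decreasing part and $a_j$ in the increasing part, a witness in row $i{+}1$ forces one to pass to the cell above it and produce configuration $(v)$, while a witness in row $i$ may require recursing to an earlier case. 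So the argument is not a flat case split but involves replacing the original violating pair by a nearby one; you should expect this when you carry out the bookkeeping.

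Second, condition~(b) is not subsumed by condition~(c): the paper treats failure of~(b) directly as configuration~$(i)$, and conversely configuration~$(i)$ forces~(b) to fail (using unimodality of $R_{i+1}$ to push $b$ to an endpoint). Your fallback ``argue directly that (b) is equivalent to avoiding configuration~$(i)$'' is the right move; the claim that $(i)$ is a degenerate case of the other four is not.
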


\begin{proof}
    Suppose a tableau contains configuration $(i)$, then the tableau breaks rule (b) as $b > a$ where $a$ is the first entry in the row above $b$.\\
    Suppose a tableau contains configuration $(ii)$, then $-c < \pm a$ is witnessed by $b\in (-c, \pm a]$ meaning rule (c) is broken.\\
    Suppose a tableau contains configuration $(iii)$, then as $v < z$ we have that $z$ is in the increasing part of it's row.
    Thus, as $x < z$ we always have $y$ witnesses $z$ over $x$ resulting in the tableau breaking rule $(iii)$.\\
    Suppose a tableau contains configuration $(iv)$ or $(v)$, then as $y \leq z$ we have that $z$ is in the increasing part of it's row.
    Thus, as $x < z$ we always have $y$ witnesses $z$ over $x$ resulting in the tableau breaking rule $(iii)$.\\
    Therefore, we have that if a tableau contains any of the five configurations, then it is not a strict decomposition tableau.

    \noindent Suppose a tableau fails rule (b) at row $i$.
    Let $a$ be the first entry of row $R_{i}$ and $b$ be the entry of row $R_{i+1}$ that is greater or equal to $a$.
    Then $a\leq b$ forms configuration $(i)$.\\
    Suppose a tableau fails rule (c).
    Let $p\in R_{i+1}$, $q\in R_{i}$ such that $q$ above $p$ is witnessed by an element $w$.\\
    \textbf{Case 1:} Both $p$ and $q$ are in the decreasing part of their rows.
    Note that $q \leq w < p$.
    If $w\in R_{i+1}$, then we have that $w$ is right of $p$ and $q < w < p$.
    Therefore, the tableau contains configuration $(ii)$ with $a = q$, $b = w$, and $c = p$.
    If $w\in R_{i}$, then let $t$ be the element directly below $w$.
    If $t$ does not exist then the tableau forms configuration $(i)$ with $a = w$ and $b = p$.
    Otherwise, we have that $t > p$ as $p$ is in the decreasing part of it's row.
    However, then the tableau forms configuration $(ii)$ with $a = w$, $b = p$, and $c = t$.\\
    \textbf{Case 2:} $p$ is in the decreasing part of it's row and $q$ is in the increasing part of it's row.
    Note that $w < q$.
    If $w\in R_{i+1}$, take $t$ to be the element directly above $w$.
    As $q$ is in the increasing part of it's row we have that $q < t$ and as $w$ witnesses $q$ above $p$ we also have $w < q$.
    Therefore, the tableau forms configuration $(v)$ with $x = w$, $y = q$, and $z = t$.
    If $w\in R_{i}$, take $t$ to be the element directly below $w$.
    If $t$ does not exist, then the tableau either forms configuration $(i)$ or $(iv)$ with $a = w$ and $b = p$ if $w \leq p$ and $x = p$, $y = w$, $z = q$ otherwise.
    If $t$ does exist, then we have that $t > p$ as $p$ is in the decreasing part of its row and $t$ is left of $p$.
    Then $w$ above $t$ is witnessed by $p$ bringing us to either case 1 or the earlier part of this case.
    In either situation we have shown that the tableau contains one of the configurations.\\
    \textbf{Case 3:} Both $p$ and $q$ are in the increasing part of their row.
    Note that $p < w \leq q$.
    If $w\in R_{i+1}$, then let $t$ be the element directly left of $q$.
    As $q$ is in the increasing part of it's row we have that $t < q$.
    Therefore, the tableau contains configuration $(iii)$ with $v = t$, $x = p$, $y = w$, and $z = q$.
    If $w\in R_{i}$, then the tableau forms configuration $(iv)$ or $(v)$ with $x = p$, $y = w$, $z = q$.
\end{proof}

If a tableau follows all the conditions of a strict decomposition except with weakly unimodal rows instead or equivalently avoids each configuration in Lemma~\ref{lem:config} with weakly unimodal rows instead we call the tableau a \defn{pseudo strict decomposition tableau}.

\begin{lemma}\label{l:SDT unimodal}
    For $T$ a strict decomposition tableau with rows $R_1,\ldots, R_k$, for each $i\in [k]$, $R_i$ is a unimodal subsequence of maximal length in $R_k R_{k-1}\ldots R_i$.
\end{lemma}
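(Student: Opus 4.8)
The plan is to prove the two halves of the claim separately: first that $R_i$ is a unimodal subsequence of $R_k R_{k-1} \cdots R_i$ (which is immediate from Condition (a)), and second that no longer unimodal subsequence exists. The content is entirely in the second half. I would argue by contradiction: suppose $w = w_1 w_2 \cdots w_m$ is a unimodal subsequence of $R_k R_{k-1} \cdots R_i$ with $m > \lambda_{i-1}+1 = |R_i|$ (length of row $i$). Since $w$ uses $m$ letters but $R_i$ has only $|R_i| < m$ entries, by pigeonhole $w$ must use an entry from some row $R_t$ with $t > i$, i.e.\ a strictly lower row. The strategy is to show this forces one of the forbidden configurations (i)--(v) of Lemma \ref{lem:config}, contradicting that $T$ is a strict decomposition tableau.

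The key technical step is a local exchange/pushing argument. Given the unimodal subsequence $w$, I would look at the lowest row $R_t$ ($t$ maximal) from which $w$ selects an entry, say $w$ selects entry $y$ from $R_t$. Let $y'$ be any entry $w$ selects from $R_{t-1}$ (if $w$ selects nothing from $R_{t-1}$, I would instead compare against all of $R_{t-1}$ directly, using Condition (b) and the unimodality of $R_{t-1}$). Using Condition (b), the first and last entries of $R_t$ are smaller than the first entry of $R_{t-1}$, so $y$ is "small" relative to $R_{t-1}$; combined with unimodality of $w$, this restricts where $y$ can sit in the unimodal pattern. Then the row-comparison rule of Lemma \ref{lem:config} (configurations (iii), (iv), (v) governing an entry in the increasing part of a lower row sitting below a larger entry, and (i), (ii) governing the decreasing parts) is exactly what is needed to derive that the pair (selected entry of $R_{t-1}$, selected entry of $R_t$) together with a suitable witness forms a forbidden configuration. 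The cases split according to whether $y$ lies in the decreasing or increasing part of $w$ and whether the relevant neighbor lies in the decreasing or increasing part — mirroring the three-case analysis in the proof of Lemma \ref{lem:config} itself.

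I expect the main obstacle to be bookkeeping: cleanly reducing "a unimodal subsequence of $R_k \cdots R_i$ touches a lower row" to "two adjacent rows $R_{t-1}, R_t$ exhibit a forbidden configuration," because the subsequence can skip rows and choose entries in complicated positions. The cleanest route is probably an induction on $k - i$ (the number of rows below $R_i$): the inductive hypothesis gives that $R_{i+1}$ is a maximal unimodal subsequence of $R_k \cdots R_{i+1}$, so any unimodal subsequence of $R_k \cdots R_i$ can be assumed, after replacing its portion below row $i$ by a subsequence of $R_{i+1}$ of at least the same length, to use only rows $R_i$ and $R_{i+1}$; then one is reduced to a purely two-row statement — a unimodal subsequence of $R_{i+1} R_i$ has length at most $|R_i|$ — which is precisely a restatement of Conditions (a), (b), (c) via Lemma \ref{lem:config}. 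Proving that two-row base claim is then a finite case check on where the "extra" entry of $R_{i+1}$ would have to interleave into a unimodal pattern using all of $R_i$, each case producing one of (i)--(v).
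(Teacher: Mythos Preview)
Your overall architecture---reduce to a two-row statement, then do a configuration case-check---is the same as the paper's, but there is a genuine gap in your inductive reduction. You write that the inductive hypothesis ``$R_{i+1}$ is a maximal unimodal subsequence of $R_k\cdots R_{i+1}$'' lets you replace the below-row-$i$ portion of an arbitrary unimodal subsequence $w=uv$ (with $u$ in $R_k\cdots R_{i+1}$ and $v$ in $R_i$) by a subsequence $u'$ of $R_{i+1}$ of at least the same length with $u'v$ still unimodal. Maximality alone does not give this: it only bounds $|u|\le |R_{i+1}|$, not that some $u'\subseteq R_{i+1}$ with $|u'|\ge|u|$ exists whose terminal value is compatible with the continuation $v$. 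One can easily imagine (absent the SDT conditions) that every length-$|u|$ unimodal subsequence of $R_{i+1}$ ends too high or too low to splice onto $v$. So the ``so'' in your sentence is doing illegitimate work.

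The paper sidesteps this by proving the stronger one-step replacement lemma directly: given a unimodal subsequence whose initial segment $r_1,\dots,r_\ell$ lies in row $j$, it exhibits a unimodal subsequence of at least the same length that avoids row $j$ entirely, by explicitly replacing $r_1,\dots,r_\ell$ with an initial segment of row $j-1$ (or a slight variant) and checking, via the forbidden configurations (i)--(v), that the splice to $r_{\ell+1}$ remains unimodal. Iterating this pushes the whole subsequence into row $i$. In other words, the case analysis you deferred to the ``two-row base case'' is exactly what is needed to justify the inductive reduction itself; there is no shortcut. If you want to keep your inductive framing, you must strengthen the hypothesis to the replacement property (for every unimodal $u$ in $R_k\cdots R_{i+1}$ and every compatible continuation, some $u'\subseteq R_{i+1}$ works), and proving that strengthened hypothesis is the same configuration case-check the paper carries out.
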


\begin{proof}
    Let $t_1 t_2 \ldots t_{|T|}$ be the reading word of $T$.
    Let $r_1 \ldots r_p$ be a unimodal subsequence of the reading word with $r_1, \ldots, r_{\ell}$ being in row $j$.
    We aim to show that there exists another unimodal subsequence of equal or greater length that starts in row $j-1$.\\
    Suppose $r_{\ell+1}$ is in any row above row $j-1$.
    Let $s$ be the element directly above $r_{\ell}$ and let $s'$ be the element directly left of $s$.
    If $r_{\ell} < r_{\ell+1}$ and if $s < r_{\ell+1}$, then $r_1, \ldots, r_{\ell}$ can be replaced with the elements in row $j-1$ up to $s$.
    If $r_{\ell} < r_{\ell+1}$ and $s \geq r_{\ell+1}$, then in order to avoid configuration $(v)$ we either have $s' > s$ or $s' < r_{\ell}$.
    In both cases we can replace $r_1, \ldots, r_{\ell}$ with row $j-1$ up to $s'$ as in the first case row $j-1$ is decreasing up to $s'$.
    Therefore, we will assume $r_{\ell+1}$ is in row $j-1$ as in the case where it is not we can raise elements until we have this property.\\
    \textbf{Case 1:} Let $r_{\ell+1}$ be in row $j-1$ weakly left of $r_{\ell}$.
    Note that this implies that $r_{\ell} < r_{\ell+1}$.
    This is due to the fact that if the unimodal sequence is decreasing up to $r_{\ell}$ then row $j$ must be decreasing up to $r_{\ell}$ as well.
    Therefore the element below $r_{\ell+1}$ is greater than $r_{\ell}$ if it exists.
    However, then the tableau would form configuration $(i)$ or $(ii)$ with $a = r_{\ell+1}$, $b = r_{\ell}$, and $c$ being the element below $r_{\ell+1}$ if it exists.
    Thus, we have that $r_{\ell} < r_{\ell+1}$.
    Then the element above $r_{\ell}$ must be less than $r_{\ell+1}$, as otherwise that element would be the $z$ in a $(iv)$ or $(v)$ configuration with $x = r_{\ell}$ and $y = r_{\ell+1}$.
    This allows us to then replace $r_1, \ldots, r_{\ell}$ with row $j-1$ up to the element above $r_{\ell}$ and keep the sequence unimodal.\\
    \textbf{Case 2:} Let $r_{\ell+1}$ be in row $j-1$ right of $r_{\ell}$.
    If $r_{\ell} < r_{\ell+1}$, then we can simply replace $r_1,\ldots, r_{\ell}$ with row $j-1$ up to $r_{\ell+1}$.
    If $r_{\ell} > r_{\ell+1} < r_{\ell+2}$ (or $r_{\ell+2}$ does not exist), then we can replace $r_1,\ldots, r_{\ell}$ with row $j-1$ up to $r_{\ell+1}$.
    If $r_{\ell} > r_{\ell+1} > r_{\ell+2}$, let $s$ be the element directly above $r_{\ell}$.
    If $r_{\ell} > s > r_{\ell+2}$, then we can replace $r_{\ell+1}$ with $s$ and then reduce to sequence in row $j-1$ and above by Case 1.
    If $r_{\ell} \leq s$, then we can replace $r_1,\ldots, r_{\ell}$ with row $j-1$ up to $s$.\\
    Thus we have shown given any unimodal subsequence of $T$ that doesn't start in row $1$, there exists a unimodal subsequence of equal or greater length that starts a row higher.
    Therefore, as the first row is unimodal it must be a unimodal subsequence of maximal length.
\end{proof}

The following tableaux were introduced as key objects for Kra\'siewicz insertion.

\begin{definition}
\label{d:standard-decomposition}
    Let $\lambda$ be a strict partition with $\ell(\lambda) = k$.
    A \emph{reduced decomposition tableau} of \emph{shape} $\lambda$ is a tableau $T:D_\lambda \rightarrow \mathbb{N}$ with rows $R_1 \dots R_k$ such that the reading word is a reduced word and $R_i$ is a unimodal subsequence of maximal length in $R_k R_{k-1} \ldots R_{i}$ for all $i\in [k]$.
\end{definition}

\begin{proposition}
    Every reduced decomposition tableau is also a strict decomposition tableau.
\end{proposition}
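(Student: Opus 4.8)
The plan is to verify, for a reduced decomposition tableau $T$ with rows $R_1,\dots,R_k$, the three conditions of Definition~\ref{d:strict-decomposition}. Condition (a) is free, since Definition~\ref{d:standard-decomposition} already requires each $R_i$ to be unimodal. For (b) and (c) I would pass to the local reformulation of Lemma~\ref{lem:config} and instead show that $T$ contains none of the five configurations (i)--(v). So suppose toward a contradiction that $T$ does contain one, located in two consecutive rows $R_i$ (above) and $R_{i+1}$ (below). The only features of $T$ left to exploit are that the reading word $\rho(T)=R_kR_{k-1}\cdots R_1$ is reduced and that each $R_j$ has maximal length among unimodal subsequences of $R_kR_{k-1}\cdots R_j$; both will be needed.

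The engine of the argument is to build, from the offending configuration, a unimodal subsequence of $R_kR_{k-1}\cdots R_i$ that is strictly longer than $R_i$, contradicting maximality. The raw material is $R_{i+1}$, which is read immediately before $R_i$; so any strictly decreasing chain of entries of $R_{i+1}$ lying above the first entry $c_1$ of $R_i$ may be prepended to $R_i$, and more generally a suitable unimodal piece of $R_{i+1}$ may be glued in front of a suitable piece of $R_i$ to make a longer unimodal subsequence (the $R_{i+1}$-entries necessarily all preceding the $R_i$-entries, since $\rho(T)$ reads the rows bottom-up). Running through the five pictures of Lemma~\ref{lem:config}: in (iii), (iv), (v) one locates the entry $z$ that the hypotheses force into the increasing part of $R_i$ and the entry $x$ of $R_{i+1}$ directly below it with $x<z$; in (i), (ii) one uses the large boundary entry of $R_{i+1}$. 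In each picture I would check that the gluing exists and genuinely gains a cell --- as long as the relevant non-strict inequality of Lemma~\ref{lem:config} ($a\le b$ in (i), (ii); $x<y\le z$ and $v<z$ in (iii)--(v)) is actually strict.

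What remains are the boundary cases in which an inequality degenerates to an equality, so that the simple gluing above loses a cell --- for instance when the first or last entry of $R_{i+1}$ equals $c_1$, or when a repeated value straddles a single unimodal row. Here I would invoke reducedness: such an equality forces two occurrences of a single Coxeter generator $s_j$ in $\rho(T)$, and the letters between them are read off the remaining entries of the unimodal row or rows at hand. One then argues, in the spirit of Lemma~\ref{l:top-word}, that either all those intervening letters commute past $s_j$ --- so that $\rho(T)$ admits a strictly shorter expression, contradicting reducedness --- or one of them after all exceeds $c_1$ and can be prepended to $R_i$, contradicting maximality --- or the configuration collapses to an adjacent $s_js_j$ in $\rho(T)$, which happens exactly when a length-one row $R_{i+1}$ lies directly above the first cell of $R_i$, and is again impossible in a reduced word. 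I expect this last step, the joint use of maximality and reducedness in the equality cases, to be the main obstacle; the rest is a finite inspection of the five configurations against the two surviving hypotheses, using nothing beyond the material of this section.
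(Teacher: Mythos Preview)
Your overall strategy coincides with the paper's: condition (a) is immediate, condition (b) falls to the prepending argument, and for (c) the paper splits into Case~1 (witness $c\neq a_j$) and Case~2 (witness $c=a_j$). Case~1 is exactly your gluing construction: splice the witness between an initial segment of $R_{i+1}$ and a terminal segment of $R_i$ to produce a unimodal subsequence strictly longer than $R_i$.

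The gap is your treatment of the equality case. The trichotomy you propose --- all intervening letters commute past $s_j$, or one of them exceeds $c_1$ and can be prepended, or the two $s_j$'s are adjacent --- does not survive the braid relations. An intervening $s_{j\pm1}$ blocks commutation without necessarily exceeding the first entry of $R_i$, and its presence does not by itself make the word non-reduced (for instance $j\,(j{+}1)\,j$ is reduced). Invoking Lemma~\ref{l:top-word} is the wrong model: that lemma applies only to top elements, precisely because their words avoid braids entirely; for a general reduced word no such dichotomy holds.

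The paper's Case~2 is accordingly far more delicate than your sketch suggests. One fixes the \emph{maximal} witness $c$ among all witnesses equal to their top element, and then argues in subcases on the value of $c$ and on whether $a_j$ lies in $R_i^\uparrow$ or $R_i^\downarrow$. Maximality of $c$ is used to rule out a nearby $c{+}1$ (such an entry would witness a higher pair, contradicting the choice of $c$), while the assumption that no Case~1 witness exists forces specific values on neighbouring entries (for instance $b_j=c{-}1$). These constraints together are then shown to force explicit non-reduced patterns in $\rho(T)$ such as $(c{-}1)\,c\,(c{-}1)\,c$ when $c>1$, or $1\,0\,1\,0\,1$ when $c=1$. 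This interplay --- maximality of the chosen witness against the absence of a strict witness, leading to a forbidden braid-type subword --- is the real content of the argument, and your outline does not yet reach it.
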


\begin{proof}
    Let $T$ be a reduced decomposition tableau with rows $R_1, \ldots, R_k$.
    By definition we have each row is unimodal, thus $T$ satisfies $(a)$.
    Suppose the first or last entry of $R_{i+1}$, $a$, is greater than the first entry of $R_i$. 
    Then $aR_i$ would be a unimodal subsequence of $R_k R_{k-1} \ldots R_{i}$, contradicting $T$ being a reduced decomposition tableau.
    Thus, $T$ satisfies $(b)$.

By way of contradiction, assume $T$ breaks condition (c) in rows $R_i$ and $R_{i+1}$ with $\pm a_j$ in $R_i$ greater than $\pm b_{j}$ in $R_{i+1}$ and witness $c$.

    \noindent \textbf{Case 1:} $c \neq a_j$.
    Let $R'_{i+1}$ be $R_{i+1}$ up to position $j$ and $R'_{i}$ be $R_{i}$ without the first $j-1$ entries.
    Then $R'_{i+1} c R'_{i}$ is a unimodal subsequence of $R_k R_{k-1}\ldots R_{i}$ of length greater than $R_{i}$, a contradiction.\\

    \noindent \textbf{Case 2:} Suppose that there is no witness that satisfies the conditions of case 1, that is every witness that exists is equal to the top element.
    Take $c$ to be the greatest such witness.
    
    \noindent \textbf{Case 2.1:} Suppose $c=0$.
    Then we must have that $a_j = 0$, $c$ must be in the bottom row at position $j$, and there must be a $1$ between $a_j$ and $c$.
    However, that $1$ will always be witnessed by $0 = b_{j+1} < a_{j+1}$ contradicting our assumption that $c$ was maximal.
    
    \noindent \textbf{Case 2.2:} Suppose $a_j$ is in the increasing part of $R_i$.
    
    \noindent \textbf{Case 2.2.1:} Suppose $c=1$, which requires then that $b_{j} = 0$.
    In order for the reading word to be reduced there must be a $2$ or $0$ between $a_j$ and $c$.
    If $c$ is in $R_i$, then there cannot be $c+1$ between $a_j$ and $c$.
    If $c$ is in $R_{i+1}$, then as $a_{j+1} > 1$ any $2$ between $a_j$ and $c$ would witness $1 = b_{j+1} < a_{j+1}$ contradicting $c$ maximal.
    Thus, there must be a $0$ between $a_j$ and $c$, specifically $a_{j-1}$ must be $0$.
    Further, there cannot be any $2$ between $b_{j-1}$ and $c$ as if $c$ is in $R_i$ then the element below $c$ is at least $2$ which then contradicts that $c$ is maximal and if $c$ is in $R_{i+1}$ such a $2$ would mean $R_{i+1}$ is not unimodal.
    However, then the tableau has a $10101$ pattern which is not reduced.
    
    \noindent \textbf{Case 2.2.2:} Suppose $c > 1$.
    In order for the reading word to be reduced there must be a $c+1$ or $c-1$ between $a_{j}$ and $c$.
    If $c$ is in $R_i$ then there cannot be $c+1$ between $a_j$ and $c$ without making $R_i$ not unimodal.
    If $c$ is in $R_{i+1}$ then the element above $c$ is greater than $a_j$ meaning that a $c+1$ between $c$ and $a_j$ would be witnessed contradicting $c$ being maximal.
    Therefore, there is a $c-1$ between $a_j$ and $c$.
    Further, in order to not be in case $1$ we require that $b_{j}=c-1$ otherwise $b_{j} < a_j$ witnesses $c-1$.
    However, the reading word then has a $c-1\; c\; c-1\; c$ pattern with $c > 1$, meaning the word is not reduced.

    \noindent \textbf{Case 2.3:} Suppose $a_j$ is in the decreasing part of $R_i$, note that this means $c$ is in $R_{i+1}$.
    In order for the reading word to not be reduced there must be a $c+1$ or $c-1$ between $a_j$ and $c$.

    \noindent \textbf{Case 2.3.1:} Suppose $c-1$ is between $a_j$ and $c$.
    If $c$ is in the increasing part of $R_{i+1}$ then since $R_i$ and $R_{i+1}$ are unimodal there cannot be and $c-1$ between $a_j$ and $c$.
    Further, if $c$ is in the decreasing part of $R_{i+1}$ then the element above $c$ witnesses $c-1$.
    In order to not be case 1 we then require that the element above $c$ is also $c-1$.
    However, then the reading word will not be reduced unless $c=1$ in which case the element above $c-1$ has witnesses such that we are either in case 1 or case 2.2.1.

    \noindent \textbf{Case 2.3.2:} Suppose $c+1$ is between $a_j$ and $c$, which requires that $b_{j}=c+1$ to not be in case 1.
    Further, suppose $c-1$ is not between $a_j$ and $c$.
    Then the reading word has a $c c+1 c c+1$ pattern meaning that the reading word is not reduced unless $c=0$.
    However, then the element above $c$ witnesses the $c+1$ between $a_j$ and $c$ meaning this case is either covered in case 1 or case 2.2.1.

\end{proof}

\section{Kra\'skiewicz--Hecke Insertion}
\label{s:insertion}

The goal of this section is to construct a bijection from words in the alphabet $\mathbb{N} = \{0,1,2,\dots\}$ to pairs of tableaux $P,Q$ of the same shape where $P$ is a \emph{strict decomposition tableau} and $Q$ is a \emph{standard set-valued shifted tableau}.
This algorithm generalizes Kra\'skiewicz insertion, introduced in~\cite{kraskiewcz1989reduced} and further studied in~\cite{lam1995b}, which maps reduced words for signed permutations to pairs $P,Q$ where $P$ belongs to a subclass of strict decomposition tableau and $Q$ is standard shifted but not set-valued.

\subsection{Insertion}
\label{ss:insertion}

We are now prepared to introduce our insertion algorithm, beginning with the row insertion rule.
This algorithm is very similar to Kra\'skiewicz row insertion, but requires a plethora of additional cases to correct situations where the resulting tableau would fail to be a strict decomposition tableau.

\begin{definition}
    \label{d:row-insertion}
    \emph{(Kra\'skiewicz--Hecke) row insertion} is an algorithm with inputs $a \in \mathbb{N}$ and a two-row strict decomposition tableau $RS$ and outputs $b \in \mathbb{N} \cup \{\infty\}$ and two-row strict decomposition tableau $R'S$.
    Note $S$ is unchanged and that $S$ may be empty.
    Row insertion is a two step procedure, first applying \emph{right insertion} and then \emph{left insertion}, which in some cases is trivial.
    Let $R = r_1 \dots r_\ell$ with dip $r_q$ and $S = s_2 \dots s_k$ with dip $s_p$ and $k \leq \ell$.
    Set $r_0, s_1 = -\infty$, $r_{\ell+1} = \infty$ and $s_m = \infty$ for $m > k$.
    Note $r_i$ appears immediately above $s_i$ in $RS$. For any $d\in \N$, define $A_d = \{s_{d+1},\dots,s_k,r_1,\dots,r_{d-1}\}$.
    
    We first define right insertion, which outputs $a' \in \mathbb{N} \cup \{\infty\}$ and row $R''$, which is an input for left insertion.
    If $a$ equals the dip $r_q$, set $i=q$.
    Otherwise, let $q < i \leq \ell+1$ be minimal such that $r_i \geq a$.
    \begin{enumerate}[(R1)]
        \item If $a \neq r_i$, then set $a' = r_i$ and create $R''$ by changing $r_i$ to  $a$.
        \item If $a = r_i$ and $r_{i-1} > r_{i+1}$, then set $a' = r_{i+1}$ and create $R''$ by setting $r_{i+1} = r_i$, note the resulting tableau will be a pseudo SDT.
        \item If $a = r_i$ and $r_{i-1} \leq r_{i+1}$, then let $R''=R$ and set $a' = \min ( \{r_{i+1}\} \cup [ (s_i, r_{i+1}) \cap A_i])$.
    \end{enumerate}

    We now define left insertion. If $a'=\infty$, then set $b=\infty$ and $R'=R''$. Otherwise, let $1\leq j \leq q$ be minimal such that $r_j \leq a'$ where here $r_j$ indicates the $j$\textsuperscript{th} entry of $R''$.
    \begin{enumerate}[(L1)]
        \item If $a' \neq r_j$, set $b = r_j$ and create $R'$ by changing $r_j$ to $a'$.
        \item If $a' = r_j$, then set $R' = R''$.
        \begin{enumerate}[(I)]
            \item If $j < p$, we define 
            \[
            b = \begin{cases}
            \max(A_j \cap (r_{j+1}, s_j)), & \text{if } j+1 < q \text{ and } A_j \cap (r_{j+1},s_j) \neq \emptyset\\
            \max(A_j\cap (-\infty, s_j)), & \text{if } j+1 \geq q \text{ and } A_j \cap (-\infty,s_j) \neq \emptyset\\
            r_{j+1}, & \text{otherwise.}
            \end{cases}
            \]
            
            \item If $j \geq p$, then define
            \[
            b = \begin{cases}
            s_{j+1}, & \text{ if } (a)\; j+1 \geq q\ \text{and}\  (b)\; r_{j+1} > s_{j+1} \text{ or } r_{j+2}, s_{j+2} > r_j > s_{j+1}\\
            r_{j+1}, & \text{otherwise.}
            \end{cases}
            \]
        \end{enumerate}
\end{enumerate}
    
\end{definition}

\begin{example}
We present examples of the various steps of the insertion.

    \begin{enumerate}
\item Here $r_i = \infty$ triggering (R1), followed by left inserting $\infty$ triggering (L1) which ends the process:
    \[
        \centering
    \begin{ytableau}\none&3&0&\none[\;\;\;\leftarrow2]\\\end{ytableau}
\quad \quad \quad
    \begin{ytableau}\none[\infty\rightarrow\;\;\;]&3&0&2\\\end{ytableau}
\quad 
\begin{ytableau}
        \none & 3 & 0 & 2\\
    \end{ytableau}
    \]\vspace{.25cm}

\item Here $r_i = 2$ triggering (R1), then $r_j = 0 \neq 2$ triggering (L1):
\[
\centering
    \begin{ytableau}\none&3&0&2&\none[\;\;\;\leftarrow1]\\\end{ytableau}
\quad \quad \quad
    \begin{ytableau}\none[2\rightarrow\;\;\;]&3&0&1&\none\\\end{ytableau}
\begin{ytableau}\none&3&2&1&\none\\
    \none&\none& &\none[\;\;\;\leftarrow0]\\\end{ytableau}
\]\vspace{.25cm}
\item Here $r_i = 0$ triggering (R2) as $2 > 1$, then left insertion of $0$ triggers (L1). When the bottom row is empty we take $p=0$, thus we get that $0$ is inserted into the next row:
\[
\begin{ytableau}\none&2&0&1&\none[\;\;\;\leftarrow0]\\\end{ytableau}
\quad \quad \quad
\begin{ytableau}\none[1\rightarrow\;\;\;]&2&0&0&\none\\\end{ytableau}
\quad \quad \quad
\begin{ytableau}\none&2&1&0&\none\\
\none & \none & & \none[\;\;\;\leftarrow0]\\
\end{ytableau}
\quad \quad \quad
\begin{ytableau}\none&2&1&0&\none\\
\none & \none & 0 & \none\\
\end{ytableau}
\]
\vspace{.25cm}

\item Here $r_i = 1$ triggering (R3) as $0 \leq \infty$. As $r_{i+1}$ and $s_i$ are $\infty$ we get $a'=r_{i+1}=\infty$, terminating as in (1):
\[
\begin{ytableau}\none&2&0&1&\none[\;\;\;\leftarrow1]\\\end{ytableau}
\quad \quad \quad
    \begin{ytableau}\none[\infty\rightarrow\;\;\;]&2&0&1&\none\\\end{ytableau}
\]
\vspace{.25cm}

\item Here $r_i = 1$ triggering (R3) as $0 \leq 3$. As $2 \in (1,3)$ and there is a $2$ between $s_i$ and $r_i$, we get $a'=2$:
\[
\begin{ytableau}\none&*(green)4&*(green)2&*(green)0&1&*(green)3&\none[\;\;\;\leftarrow1]\\\none&\none&2&0&1\\\end{ytableau}
\quad \quad \quad
    \begin{ytableau}\none[2\rightarrow\;\;\;]&4&2&0&1&3&\none\\\none&\none&2&0&1\\\end{ytableau}
\]\vspace{.25cm}

\item Here $r_j = 1$ triggering (L2). As $j = 2 < p = 3$ and $j+1 = 3 \not< q = 3$, we set $b$ to $\max (-\infty, 2) \cap \{0,1,4\}$ which is $1$:
\[
\begin{ytableau}\none[2\rightarrow\;\;\;]&*(green)4&2&0&1&3&\none\\\none&\none&2&*(green)0&*(green)1\\\end{ytableau}
\quad \quad \quad
    \begin{ytableau}\none&4&2&0&1&3&\none\\\none&\none&2&0&1&\none[\;\;\;\leftarrow1]\\\end{ytableau}
\]\vspace{.25cm}

\item Here $r_j = 3$, triggering (L2). As $j = 2 < p = 3$ and $j+1 = 3 < q = 4$, we set $b$ to $(1, 3) \cap \{2, 4\}$ which is $2$:
\[
\begin{ytableau}\none[3\rightarrow\;\;\;]&*(green)4&3&1&0&1&3\\\none&\none&3&*(green)2\\\end{ytableau}
\quad \quad \quad
    \begin{ytableau}\none&4&3&1&0&1&3\\\none&\none&3&2&\none[\;\;\;\leftarrow2]\\\end{ytableau}
\]\vspace{.25cm}

\item Here $r_j = 4$, triggering (L2). As $j = 2 \not< p = 2$, we are in case (II). (a) and (b) hold as $j + 1 = 3 \geq q = 3$, $j = 2 \geq p = 2$, and $r_{j+1} = 2 > s_{j+1} = 1$. Thus, we set $b = s_{j+1} = 1$:
\[
\begin{ytableau}\none[4\rightarrow\;\;\;]&5&4&2&3\\\none&\none&0&1\\\end{ytableau}
\quad \quad \quad
    \begin{ytableau}\none&5&4&2&3\\\none&\none&0&1&\none[\;\;\;\leftarrow1]\\\end{ytableau}
\]\

\vspace{.25cm}
\item Here $r_j = 2$ triggering (L2). As $j = 2 \not< p = 2$, we are in case (II). (a) and (b) hold as $j + 1 = 3 \geq q = 3$, $j = 2 \geq p = 2$, and $r_{j+2}, s_{j+2} = 3,3 > r_j = 2 > s_{j+1} = 1$. Thus, we set $b = s_{j+1} = 1$:
\[
\begin{ytableau}\none[2\rightarrow\;\;\;]&4&2&1&3&\none\\\none&\none&0&1&3\\\end{ytableau}
\quad \quad \quad
    \begin{ytableau}\none&4&2&1&3&\none\\\none&\none&0&1&3&\none[\;\;\;\leftarrow1]\\\end{ytableau}
\]

\vspace{.25cm}

\item Here $r_j = 2$ triggering (L2). As $j = 2 \not< p = 2$, we are in case (II). (b) does not hold as $r_{j+1} = 1 \not> s_{j+1} = 2$ and $r_{j+2} = 0 \not> r_j = 2$. Thus, $b = r_{j+1} = 1$:
\[
\begin{ytableau}\none[2\rightarrow\;\;\;]&4&2&1&0\\\none&\none&2&3\\\end{ytableau}
\quad \quad \quad    
    \begin{ytableau}\none&4&2&1&0\\\none&\none&2&3&\none[\;\;\;\leftarrow1]\\\end{ytableau}
\]

\end{enumerate}
\end{example}

We extend row insertion to an insertion algorithm by repeated application.

\begin{definition}
    \label{d:full-insertion}
For $P = R_1 \dots R_\ell$ a strict decomposition tableau and $a\in \mathbb{N}$, we \emph{Kra\'skiewicz--Hecke insert} $a$ into $P$ by row inserting $a$ into $R_1R_2$, updating $R_1$ and inserting the output $b$ into $R_2R_3$ and so on until the output is $\infty$.
The insertion \emph{terminates in row $i$} where $i$ is the row whose output from row insertion is $\infty$.
    
    For $\bfa = (a_1,\dots,a_p) \in \mathbb{N}^p$ and $\bfa'=(a_1,\dots,a_{p-1})$, we define the \emph{Hecke--Kra\'skiewicz insertion tableau} $P_{HK}(\bfa)$ recursively by row inserting $a_p$ into $P_{HK}(\bfa')$.
    The \emph{Hecke--Kra\'skiewicz recording tableau} $Q = Q_{HK}(\bfa)$ is also constructed recursively from $Q' = Q_{HK}(\bfa')$.
    Let $\lambda$ and $\lambda'$ be the shapes of $P_{HK}(\bfa)$ and $P_{HK}(\bfa')$, respectively.
\begin{enumerate}
    \item If $\lambda \neq \lambda'$, they differ by a single cell $(i,j)$.
    We obtain $Q$ from $Q'$ by setting $Q_{ij} = \{p\}$.
    \item If $\lambda = \lambda'$, let $k$ be the row where row insertion of $a_p$ into $P_{HK}(\bfa')$ terminates.
    Let $\ell = \max \{\ell: \lambda_\ell + \ell - k = \lambda_k\}$, and construct $Q$ from $Q'$ by adding $p$ to $Q'_{\ell\lambda_\ell}$.

\end{enumerate}
    
\end{definition}

We show this algorithm is well-defined in \Cref{p:row-insertion}.
Many of our arguments depend on understanding the sequences of left and right positions output be the insertion.

\begin{definition}
    Consider the KH--row insertion of $a \in \mathbb{N}$ into the $i$\textsuperscript{th} row of the strict decomposition tableau $P$ with $a$ bumping $a'$, which is then left inserted to obtain $P'$.
    The \emph{right position} is the largest $c$ so that $P'_{ic} = a$.
    Likewise, the \emph{left position} is the smallest $d$ so that $P'_{id} = a'$.
    When inserting $a\in \mathbb{N}$ into a strict decomposition tableau $P$, we obtain a sequence of $k$ row insertions.
    The \emph{right bumping path} is the sequence $(c_1,\dots,c_k)$ whose $i$\textsuperscript{th} entry is the right position of the $i$\textsuperscript{th} row insertion.
    Similarly, the \emph{left bumping path} is the sequence $(d_1,\dots,d_{k-1})$ whose $i$\textsuperscript{th} entry is the left position of the $i$\textsuperscript{th} row insertion.
\end{definition}
We present an example of a complete Kra\'skiewicz--Hecke insertion with bumping path.

\begin{example}
\label{ex:bumping-paths}
We present an example of $KH$ insertion into a relatively large tableau.
The right bumping path of the insertion is colored in red while the left bumping path is colored in green.
Note both bumping paths move weakly left as required by Lemma~\ref{l:bumpingpath}.
The insertion rule applied is to the right of the tableau.
\[\ytableausetup{smalltableaux}
    \begin{tabular}{cc}
\begin{ytableau}\none & 8 & 7 & 6 & 4 & 3 & 1 & 0 & 1 & 3 & 4 & 7 & \none& \none[\;\leftarrow 0]\\ \none &\none & 7 & 6 & 4 & 2 & 1 & 0 & 3 & 4 & 7 & \none\\ \none &\none &\none & 6 & 4 & 1 & 3 & \none\\ \none &\none &\none &\none & 5 & \none\\ \none &\none &\none &\none &\none & \none\\ \end{ytableau}
\;\;\;\;\;(R3)\textcolor{white}{(II)}
&
\begin{ytableau}\none[1\rightarrow\;] & \none & 8 & 7 & 6 & 4 & 3 & 1 & *(red) 0 & 1 & 3 & 4 & 7 & \none\\ \none & \none &\none & 7 & 6 & 4 & 2 & 1 & 0 & 3 & 4 & 7 & \none\\ \none & \none &\none &\none & 6 & 4 & 1 & 3 & \none\\ \none & \none &\none &\none &\none & 5 & \none\\ \none & \none &\none &\none &\none &\none & \none\\ \end{ytableau}
\;\;\;\;\;(L2)(I)\textcolor{white}{I}
\\
\begin{ytableau}\none & 8 & 7 & 6 & 4 & 3 & *(green) 1 & *(red) 0 & 1 & 3 & 4 & 7 & \none\\ \none &\none & 7 & 6 & 4 & 2 & 1 & 0 & 3 & 4 & 7 & \none & \none[\;\leftarrow 0]\\ \none &\none &\none & 6 & 4 & 1 & 3 & \none\\ \none &\none &\none &\none & 5 & \none\\ \none &\none &\none &\none &\none & \none\\ \end{ytableau}
\;\;\;\;\;(R3)\textcolor{white}{(II)}
&
\begin{ytableau}\none  & 8 & 7 & 6 & 4 & 3 & *(green) 1 & *(red) 0 & 1 & 3 & 4 & 7 & \none\\ \none[3\rightarrow\;]&\none & 7 & 6 & 4 & 2 & 1 & *(red) 0 & 3 & 4 & 7 & \none\\ \none &\none &\none & 6 & 4 & 1 & 3 & \none\\ \none &\none &\none &\none & 5 & \none\\ \none &\none &\none &\none &\none & \none\\ \end{ytableau}
\;\;\;\;\;(L1)\textcolor{white}{(II)}
\\
\begin{ytableau}\none & 8 & 7 & 6 & 4 & 3 & *(green) 1 & *(red) 0 & 1 & 3 & 4 & 7 & \none\\ \none &\none & 7 & 6 & 4 & *(green) 3 & 1 & *(red) 0 & 3 & 4 & 7 & \none\\ \none &\none &\none & 6 & 4 & 1 & 3 & \none & \none[\;\leftarrow 2]\\ \none &\none &\none &\none & 5 & \none\\ \none &\none &\none &\none &\none & \none\\ \end{ytableau}
\;\;\;\;\;(R1)\textcolor{white}{(II)}
&
\begin{ytableau}\none & 8 & 7 & 6 & 4 & 3 & *(green) 1 & *(red) 0 & 1 & 3 & 4 & 7 & \none\\ \none & \none & 7 & 6 & 4 & *(green) 3 & 1 & *(red) 0 & 3 & 4 & 7 & \none\\ \none &\none[3\rightarrow\;] &\none & 6 & 4 & 1 & *(red) 2 & \none\\ \none &\none &\none &\none & 5 & \none\\ \none &\none &\none &\none &\none & \none\\ \end{ytableau}
\;\;\;\;\;(L1)\textcolor{white}{(II)}
\\
\begin{ytableau}\none & 8 & 7 & 6 & 4 & 3 & *(green) 1 & *(red) 0 & 1 & 3 & 4 & 7 & \none\\ \none &\none & 7 & 6 & 4 & *(green) 3 & 1 & *(red) 0 & 3 & 4 & 7 & \none\\ \none &\none &\none & 6 & 4 & *(green) 3 & *(red) 2 & \none\\ \none &\none &\none &\none & 5 &  \none & \none[\;\leftarrow 1]\\ \none &\none &\none &\none &\none & \none\\ \end{ytableau}
\;\;\;\;\;(R1)\textcolor{white}{(II)}
&
\begin{ytableau}\none & 8 & 7 & 6 & 4 & 3 & *(green) 1 & *(red) 0 & 1 & 3 & 4 & 7 & \none\\ \none &\none & 7 & 6 & 4 & *(green) 3 & 1 & *(red) 0 & 3 & 4 & 7 & \none\\ \none &\none &\none & 6 & 4 & *(green) 3 & *(red) 2 & \none\\ \none &\none &\none &\none & 5 & *(red) 1 & \none\\ \none &\none &\none &\none &\none & \none\\ \end{ytableau}
\;\;\;\;\;\textcolor{white}{(R3)(II)}
    \end{tabular}
\]
    
\end{example}

\begin{lemma}
    \label{l:bumpingpath}
    The right bumping path and left bumping path are weakly decreasing sequences.
\end{lemma}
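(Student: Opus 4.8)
The plan is to prove both claims simultaneously by analyzing a single row insertion step and tracking how the positions of entries interact between consecutive rows. Fix a Kra\'skiewicz--Hecke insertion of $a\in\mathbb N$ into a strict decomposition tableau $P$, and suppose row insertion into rows $R_iR_{i+1}$ produces right position $c_i$ and bumps the value $b$, which is then inserted into $R_{i+1}R_{i+2}$ with right position $c_{i+1}$ and (if $i<k-1$) left position $d_{i+1}$. The two things I must establish are: (1) $c_{i+1}\le c_i$ and $d_i\le d_{i-1}$ (weak decrease within each path), and (2) the left position of the $i$th insertion satisfies $d_i\le c_i$ and lies weakly left of the right position of the $(i{+}1)$st insertion — these auxiliary inequalities are what let the induction go through, since $b$ enters $R_{i+1}$ roughly in the column where the left insertion of the previous step left its mark. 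I would state and prove one combined invariant by induction on $i$, rather than the two sequences separately.

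The key steps, in order: First I would record the elementary monotonicity facts about a single right insertion of $a$ into a row $R$ with dip at position $q$: the index $i$ chosen (minimal $i>q$ with $r_i\ge a$, or $i=q$ when $a$ equals the dip) depends monotonically on $a$, and in every branch (R1), (R2), (R3) the value $b$ that gets bumped out satisfies $b\ge a$ (it is $r_i$, $r_{i+1}$, or an element of $[(s_i,r_{i+1})\cap A_i]$, all $\ge a$ in the relevant sense), while the right position never moves right relative to where $a$ itself was compared. Second, I would do the same bookkeeping for left insertion (L1), (L2)(I), (L2)(II): the value $b$ output there is drawn from $\{r_{j+1},s_{j+1}\}$ or from $A_j$ intersected with an interval capped by $s_j$, so again $b$ is controlled by the column $j$ at which the left insertion acted. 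Third — the heart of the argument — I would show that when the bumped value $b$ is inserted into the next row pair $R_{i+1}R_{i+2}$, it is compared against an entry weakly to the left of column $c_i$; here I would use the strict decomposition tableau conditions (b) and (c), exactly as repackaged in Lemma \ref{lem:config}, to rule out the configurations that would force the new right position to the right of $c_i$. The semistandard/unimodality constraints force $r_{i+1,c_i}\le s_{i+1,c_i}$-type comparisons that pin down where $b$ lands. Finally, I would close the induction by checking that the left position also propagates left, using that left insertion into $R''$ picks the minimal $j$ with $r_j\le a'$ and that $a'$ itself is (weakly) smaller than the $a'$ from the row above.

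The main obstacle will be Step three: controlling the right position across rows when the previous row insertion terminated in one of the exotic branches (R3) or (L2)(II), where the bumped value $b$ is pulled from the set $A_j$ or $A_i$ rather than being a neighbor of the inserted entry. In those cases $b$ can be numerically much smaller or larger than the naive candidate $r_{j+1}$, so the usual Edelman--Greene/Kra\'skiewicz argument ("the bumped value is just the entry one step over") does not apply verbatim, and I expect to need a careful case split mirroring the ten cases in the Example, invoking Lemma \ref{lem:config} each time to certify that a right-moving bumping path would create a forbidden configuration $(ii)$, $(iii)$, $(iv)$ or $(v)$ between the two affected rows. I anticipate this is also where the two-row modification phenomenon (left insertion can alter $R_i$ after right insertion already did) makes the argument delicate, since one must verify the invariant for the post-left-insertion tableau, not the intermediate pseudo-SDT.
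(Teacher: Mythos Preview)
Your core idea---use the strict decomposition tableau conditions (via Lemma~\ref{lem:config}) to rule out a bumping path that moves right---is exactly what the paper does, and you correctly flag (R3) and (L2)(II) as the delicate branches. But the paper's execution is much leaner than your proposed framework, and one of your auxiliary claims looks off.

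The paper treats the right and left bumping paths \emph{separately}, each by a one-step contradiction argument; no combined invariant or induction on $i$ is needed. For the right path, fix a single row $R$ with row $S$ below it, let $a$ be right-inserted at column $c$, and let $y$ be the entry of $S$ in the same column $c$. The observation is that $c_{i+1}\le c_i$ follows once you know $|b|\le |y|$, where $b$ is the value eventually right-inserted into $S$. So assume $|b|>|y|$ and derive a contradiction: since $b$ is the output of left insertion one has $|b|<|a'|$, and now either $a'$ or $b$ lies in $R^\downarrow$ and furnishes a witness to $y<a'$ in $P$ (splitting on whether the right step was (R1)/(R2) or (R3)). The left bumping path is handled symmetrically, comparing $b'$ against the entry $x$ of $S$ directly below the left position $d_i$ and assuming $|b'|<|x|$ with $x\in S^\downarrow$.

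Your proposed auxiliary inequality ``$d_i$ lies weakly left of the right position of the $(i{+}1)$st insertion,'' i.e.\ $d_i\le c_{i+1}$, is neither used nor proved in the paper, and I do not see why it should hold or why you would need it. The link between consecutive rows is not through $d_i$ at all: what controls $c_{i+1}$ is the numerical comparison of $b$ against the entry of $S$ sitting under column $c_i$, and what controls $d_{i+1}$ is the comparison of $b'$ against the entry under column $d_i$. Likewise, your final step (``$a'$ is weakly smaller than the $a'$ from the row above'') is not the right inequality: Lemma~\ref{lem:insertion_size} gives $a<a'$, $b<a'$, $b<b'$, but $b'\le a'$ is not immediate and is not what the paper uses. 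Drop the combined invariant; run the two contradiction arguments in parallel instead.
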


\begin{proof}
    Consider right inserting $a$ into row $R$ of a SDT $P$ and assume the insertion does not terminate at this row. Let the row below $R$ be $S$, $a'$ the element left inserted into $R$, $b$ the element right inserted to $S$, and $b'$ the element left inserted to $S$.

    \textbf{Right bumping path:} Let $y$ be the element in $S$ below $a$ after $a$ is right inserted into $R$. If $|b| \leq |y|$, then we can see the right bumping path will be weakly decreasing. Now we assume $|b| > |y|$ for the sake of contradiction.
    By the definition of left insertion we have that $|a'| > |b|$.
    Since $|y| < |b| < |a'|$ and $a' \in R^\uparrow$, we see $y < a'$.
    Since $b$ is the output of the left insertion of $a'$ into $R^\downarrow$, either $a'$ or $b$ is in $R^\downarrow$. Next we discuss the two cases and find contradiction in each of them.

    \noindent\textbf{Case 1:} Suppose the right insertion of $a$ into $R$ triggers $(R1)$ or $(R2)$, so $a'$ is above $y$ in $P$. Then the presence of $a'$ or $b$ in $R^\downarrow$ witnesses $y < a'$, so $P$ is not a SDT and we get a contradiction.

    \noindent\textbf{Case 2:} Suppose the right insertion of $a$ into $R$ triggers $(R3)$.
    Let $z$ be the (possibly infinite) value after $a$ in $R^\uparrow$.
    By the definition of $(R3)$ we have $|a'| \leq |z|$. Also $|a'| > |y|$, so $|a'| \in (|y|, |z|]$.

    \textbf{Case 2.1:} Suppose $b\in R^{\downarrow}$.
    Since $|y| < |b| < |a'|$, we have $|b| \in (|y|,|q|)$, by the definition of $(R3)$ would give $b = a'$, a contradiction.

    \textbf{Case 2.2:} Suppose $a'\in R^{\downarrow}$ and $b\not\in R^{\downarrow}$, then we know the left insertion of $a'$ into $R^{\downarrow}$ will trigger $(L3)$.
    By the definition of $(R3)$ every element between $a' \in R^\downarrow$ and $a \in R^\uparrow$ must be less than or equal to $|y|$, as otherwise a different element would be left inserted.
    Combining this observation with the fact that $|y| < |b| < |a'|$, and the definition of $(L3)$ shows that $b$ is strictly to the right of $a'$ in $S$.
    Suppose $b\in S^{\uparrow}$, then the element above $b$ must be greater than or equal to $z$ (defined at the start of this case).
    However, as $|b| < |a'| \leq |z|$ we would have had $a'$ witnessing $b$ less than the element above it before insertion, so the tableau wouldn't be a SDT.
    Therefore, $b \in S^\downarrow$.
    Let $x$ be the entry above $b$ in $R$.
    Since $b$ is to the right of $a'$ and to the left of $y$, we see $x$ is between $a' \in R^\downarrow$ and $a \in R^\uparrow$.
    Therefore $|x| \leq |y| < |b|$ so $y$ witnesses $b < x$ in $P$,  a contradiction.
    
    We conclude that the right bumping path of our insertion is weakly decreasing.

    \textbf{Left bumping path:} The argument  is similar, but not identical to the previous case.
    If $b' = \infty$, which is the $(L1)$ case, then the result is vacuously true.
    Otherwise the left bumping path will be weakly decreasing if $|b'| \geq |x|$ or if $x\in S^{\uparrow}$, where $x$ is the element below $a'$ after $a'$ was left inserted into $R$.
    For the sake of contradiction, we suppose that $|b'| < |x|$ with $x\in S^\downarrow$.
    By the definition of right insertion we have $|b| < |b'|$, so $|b| < |x|$ as well. Next we discuss the two cases and find contradiction in each of them.

    \noindent\textbf{Case 1:}
    Suppose $b$ was bumped by $a'$ using $(L1)$. Since $b'$ is left inserted into $S$ we must have either $b$ or $b'$ in $S^\uparrow$.
    If $b$ or $b'$ is to the left of $x$, then $x\in S^{\downarrow}$ as $|b|<|b'|<|x|$ which contradicts our assumption that $x \in S^{\uparrow}$.
    If $b$ or $b'$ is to the right of $x$, then we have $x < b$ in $P$ witnessed by $b$ or $b'$, which is a contradiction.
    \textbf{Case 2:}
    Suppose $a'$ is left inserted using $(L2)$, hence $a' \in R^\downarrow$.
    If $b' \in S$, since  $|b'|\in (|b|,|x|)$ the insertion of $a'$ by $(L2)$ would bump $b'$ instead of $b$, a contradiction.
    Necessarily, when $b$ is right inserted to $S$ it triggers $(R3)$, outputting $b'$. Therefore we $b\in S^{\uparrow}$ and $b'$ is in the row below $S$ in $P$.
    
    Let $z$ be the element in $S$ above $b'$, and note $z$ is to the right of $b$ by the definition of $(R3)$, hence also to the right of $x$.
    Since $x\in S^\downarrow$, we must be in case (I) of $(L2)$. Suppose $|z| < |x|$, then we have $|r_{j+1}| < |b| < |z| < |x|$ and $A \neq \emptyset$, which will cause $(L3)$ to output a $|z|$ instead of $|b|$. Therefore we must have$|z| \geq |x|$.

    However now we have $b' < z$ witnessed by $x$, hence $P$ is not a SDT. This is a contradiction, which completes our proof.
\end{proof}

\begin{lemma}\label{lem:insertion_size}
    Right insertion outputs a greater number than the input and left insertion outputs a smaller number than the input.
\end{lemma}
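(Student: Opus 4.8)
The plan is a direct case analysis following the rules of KH row insertion in Definition~\ref{d:row-insertion}, treating right insertion first and then left insertion. For right insertion, the chosen index $i$ satisfies $i \geq q$, so $r_i$ lies in $R^\uparrow$ and $r_{i+1} > r_i$ by strict unimodality (with the convention $r_{\ell+1} = \infty$). Cases (R1) and (R2) are immediate: their outputs $r_i$ and $r_{i+1}$ both strictly exceed $a$. The only substantive case is (R3), where the output is $a' = \min\bigl(\{r_{i+1}\} \cup ((s_i, r_{i+1}) \cap A_i)\bigr)$. Since $r_{i+1} > r_i = a$, it suffices to show that every element of $(s_i,r_{i+1}) \cap A_i$ exceeds $r_i$; this is automatic when $s_i \geq r_i$, as such elements exceed $s_i$, so assume $s_i < r_i$. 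Then one shows $(s_i, r_i] \cap A_i = \varnothing$, which is exactly the strict-decomposition condition~\eqref{eq:row-rule} for the two rows $RS$ read at the column containing $r_i$ (equivalently, a consequence of avoiding configurations $(iii)$--$(v)$ of Lemma~\ref{lem:config}), once one notes that every element of $A_i = \{s_{i+1},\dots,s_k,r_1,\dots,r_{i-1}\}$ appears among the entries listed on the left of~\eqref{eq:row-rule} and that $(s_i, r_i]$ is contained in the interval $(b_j, a_j]$ on its right. The lone degeneracy is $i = q = 1$, where the top row is weakly increasing; but conditions (b) and (c) then force the lower row to be empty (otherwise configuration $(iv)$ appears), so $A_i = \varnothing$ and $a' = r_{i+1} > a$.

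A consequence of the above is that the output $a'$ of right insertion is never the dip $r_q$; in fact each case gives $a' > r_q$. Since right insertion alters only entries strictly past the dip column, the index $j$ chosen by left insertion ($1 \leq j \leq q$) must satisfy $j < q$, and the $j$th entry of the modified row $R''$ is still the original $r_j$, lying in the strictly decreasing part with $r_{j+1} < r_j$. Assuming the input $a'$ to left insertion is finite, (L1) outputs $r_j < a'$; every branch of (L2) that outputs $r_{j+1}$ does so with $r_{j+1} < r_j = a'$; and the branch outputting $s_{j+1}$ is guarded by a condition forcing $s_{j+1} < r_{j+1}$ or $s_{j+1} < r_j$, hence $s_{j+1} < a'$.

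The remaining branches of (L2) output $b = \max(A_j \cap I)$ with $I = (r_{j+1}, s_j)$ or $I = (-\infty, s_j)$, and are handled as in (R3): one checks that $A_j \cap I$ contains no entry $\geq r_j$. The entries $r_1 > \dots > r_{j-1}$ already exceed $r_j$, so one must rule out $r_{j'} < s_j$ for $j' < j$; this follows from avoiding configuration $(ii)$ with top entry $r_{j-1}$, the entry $s_{j-1}$ beneath it, and $s_j$ to its lower right, using $s_{j-1} > s_j$ (valid since these branches force $j < p$, putting $s_j$ in the decreasing part of $S$), with condition (b) covering the boundary case $j = 2$. The entries $s_{j+1},\dots,s_k$ lying in $I$ are shown to be $< r_j$ by the same configuration $(ii)$ applied with top entry $r_j$ and $s_j$ beneath it. Hence $b < r_j = a'$ in every case.

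I expect the main obstacle to be organizational rather than conceptual: rule (L2) has several nested sub-cases, and one must track exactly which cells right insertion has overwritten (only those strictly past the dip column), record the explicit form of $R''$, and invoke the correct instance of condition~(c) / Lemma~\ref{lem:config} at the appropriate column in each sub-case. Beyond the interval-disjointness arguments already needed for (R3), there is no further mathematical content.
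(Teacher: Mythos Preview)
Your approach is essentially the same as the paper's: a case analysis on the insertion rules, with the substantive cases (R3) and (L2)(I) handled by showing that a putative bad output would violate the strict decomposition condition~(c) at the relevant column. The paper phrases this via the ``witness'' terminology (e.g., ``$b$ witnesses $r_i$ over $s_i$''), while you unpack it through the configurations of Lemma~\ref{lem:config}; these are equivalent, though the witness formulation is slightly more direct since it covers both the $r$-entries and the $s$-entries of $A_j$ in one stroke, whereas you split into two applications of configuration~$(ii)$.

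One minor imprecision: you assert that the left-insertion index $j$ always satisfies $j < q$, but this can fail. For instance, with $R = (5,0,4)$ and $a = 0$ one applies (R2), obtaining $a' = 4$ and $R'' = (5,0,0)$; then $j = 2 = q$. What is true is that $j = q$ forces $a' > r_q = r''_j$, so one is in case~(L1) and the output $b = r_q < a'$ is immediate. Thus $j < q$ holds precisely in case~(L2), which is the only place you actually use the consequence $r_{j+1} < r_j$. The paper makes this same observation (``$c$ cannot be the dip of $R$'') only after restricting to~(L2). With this small reorganization your argument is complete.
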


\begin{proof}
    The result is clearly true for (R1), (R2), and (L1).
    Let $R = r_1, \ldots r_{\ell}$ be the row we are inserting into and $S = s_2 \ldots s_k$ the row below $R$.
    We first consider (R3).
    Let $a$ be the element right inserted and $b$ be the output from this right insertion.
    By the definition of (R3) $a$ must already in be in the increasing part of the row, say $r_i = a$ in $R^\uparrow$.
    Further, we have $b \in (s_i, r_{i+1})$ or $b = r_{i+1}$, which is greater than $r_i = a$ as $r_i$ is in $R^\uparrow$.
    Therefore if $a\geq b$ then $b \in (s_i,r_{i+1})$ so $r_i = a \geq b > s_i$, hence $b$ witnesses $r_i$ over $s_i$.
    Therefore, we have $a < b$.\\

    \noindent Next we consider the case (L2).
    Let $c$ be the element left inserted and $d$ be the output from this left insertion.
    By definition of (L2) $c$ must already in be in the decreasing part of the row, say $r_j = c$ in $R^\downarrow$.
    Further, by our last case we know that $c$ cannot be the dip of $R$.
    In case (II) we either have $d = r_{j+1} < r_j = c$ or we have $d = s_{j+1}$.
    In (b) of the latter case, either $d = s_{j+1} < r_{j+1} < r_{j} = c$ or $d = s_{j+1} < r_j = c$.
    Now for case (I) suppose that $c \leq d$.
    If $s_j \in S^{\downarrow}$, case (I), then we have $d < s_j$ or $d = r_{j+1}$.
    The latter contradicts $c \leq d$ as $c$ is not the dip, so $d < s_j$.
    However, if $d < s_j$ then $r_j = a \leq b< s_j$ meaning $b$ witnesses $r_j$ over $s_j$.
    Therefore, we have that $c > d$.
\end{proof}

\begin{corollary}\label{cor:no_dip}
    Right insertion can never output the dip.
\end{corollary}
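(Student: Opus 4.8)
I would prove the slightly stronger statement that if right insertion of $a$ into a row $R=r_1\cdots r_\ell$ with dip $r_q$ outputs $a'$, then $a'>r_q$ (with the convention $\infty>r_q$); this is visibly stronger than "$a'$ is not the dip $r_q$", and it also rules out $a'$ being the dip of the updated row $R''$, since the dip value of $R''$ is at most $r_q$. The argument is a short case analysis on which of (R1), (R2), (R3) is triggered, keyed to the single structural fact that the index $i$ chosen by right insertion always satisfies $i\ge q$ (one sets $i=q$ exactly when $a=r_q$, and otherwise picks $q<i\le \ell+1$).

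\textbf{Case (R1).} Here $a\ne r_i$. If $i=q$ then, by the choice rule, $a=r_q=r_i$, contradicting $a\ne r_i$; hence $i>q$. Since $r_q<r_{q+1}<\cdots<r_\ell$ is the increasing part of $R$ (and $r_{\ell+1}=\infty$), we get $a'=r_i>r_q$.

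\textbf{Cases (R2) and (R3).} Here $a=r_i$, and since $i\ge q$ and $r_q=\min\{r_q,r_{q+1},\dots,r_\ell\}$ we have $a=r_i\ge r_q$. By Lemma~\ref{lem:insertion_size} right insertion outputs a strictly larger number than its input, so $a'>a\ge r_q$.

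In all three cases $a'>r_q$, so $a'$ is never the dip of $R$; and since $R''$ equals $R$ in (R3), equals $R$ with $r_{q+1}$ relabeled $r_q$ in (R2), and equals $R$ with $r_i$ (where $i>q$, hence $r_i>r_q$) relabeled $a$ in (R1), the dip value of $R''$ is $r_q$ in the first two cases and $\min(r_q,a)\le r_q$ in the last, so $a'$ exceeds, and in particular differs from, the dip of $R''$ as well.

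\textbf{Expected difficulty.} There is essentially no obstacle here once Lemma~\ref{lem:insertion_size} is in hand; this is genuinely a corollary. The only points requiring a little care are: (i) in case (R1) one cannot simply assert $a\ge r_q$, since the inserted value may lie below the dip — the argument there must instead use that (R1) forces $i\ne q$; and (ii) one should note that the proof of Lemma~\ref{lem:insertion_size} invokes the strict-decomposition-tableau axioms directly and not this corollary, so there is no circularity.
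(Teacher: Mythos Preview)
Your proof is correct and takes essentially the same approach as the paper: the paper's one-sentence argument (``the input ends up in the increasing part of the row, and the output is greater than the input by Lemma~\ref{lem:insertion_size}'') is exactly your (R2)/(R3) case, while your separate treatment of (R1) via $i>q\Rightarrow a'=r_i>r_q$ is a harmless unpacking that avoids invoking the lemma there. Your extra remarks about the dip of $R''$ and the absence of circularity with Lemma~\ref{lem:insertion_size} are correct and worth making explicit, though the paper leaves them implicit.
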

\begin{proof}
    Right insertion always ends with the input being in the increasing part of the row, thus as the output is greater than the input we cannot output the dip.
\end{proof}

We are now prepared to show that row insertion is well-defined.
\begin{proposition}
    \label{p:row-insertion}
    Given an $a\in\mathbb{N}$ and $T$ a strict decomposition tableau, the tableau $T' = T\leftarrow a$ is also a strict decomposition tableau.
\end{proposition}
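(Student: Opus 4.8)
The plan is to reduce the statement to a local analysis of a single Kra\'skiewicz--Hecke row insertion and then to control how such a local change propagates down the tableau. Write the rows of $T$ as $R_1,\dots,R_\ell$ and say the insertion of $a$ terminates in row $m$, so that $T\leftarrow a$ has rows $R_1',\dots,R_m',R_{m+1},\dots,R_\ell$ with only the first $m$ altered. One first checks that the only box that can be created is the rightmost box of the terminal row $R_m'$, and that the termination rules never create it in a position that would violate strictness, so the shape of $T\leftarrow a$ is again the diagram of a strict partition; since $\lambda$ is strict, such a box also has no box below it. Because conditions (a), (b), (c) of Definition~\ref{d:strict-decomposition} constrain only single rows and consecutive pairs of rows, it then suffices to verify them for each $R_i'$ with $i\le m$ and for each consecutive pair $(R_i',R_{i+1}')$ with $i\le m$ (setting $R_{m+1}':=R_{m+1}$), the shape-changing case being the boundary instance.

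First I would analyse a single row insertion of a value $a$ into the two-row SDT $R_iR_{i+1}$, showing the output $R_i'R_{i+1}$ is again a two-row SDT. By Corollary~\ref{cor:no_dip} right insertion never outputs the dip, and one checks that rules (R1) and (R3) preserve unimodality of the row while (R2) produces at worst a weakly unimodal row (a pseudo SDT); in the latter case the ensuing left insertion (L1) overwrites the entry $r_j$ with $j\le q$ by the strictly larger value $a'$, closing the gap and restoring strict unimodality, so condition (a) holds. For condition (b), the first entry of $R_i'$ changes only through (L1) with $j=1$, where it strictly increases, while $R_{i+1}$ is untouched, so (b) persists. Condition (c) is the heart of the argument: using the minimality of the index $i$ (least with $r_i\ge a$) and the explicit output formulas of (R3) and (L2) --- which take a $\min$ or $\max$ over the set $A_d$ intersected with a prescribed open interval --- one verifies that neither the value $a$ written by right insertion nor the value $a'$ placed by left insertion witnesses a violation of~\eqref{eq:row-rule} against $R_{i+1}$, and conversely that deleting the bumped values creates no new witness. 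It is convenient to run this check through the five forbidden configurations of Lemma~\ref{lem:config}, showing none of $(i)$--$(v)$ is created.

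Next I would treat the seam between $R_i'$ and $R_{i+1}'$. When the output $b$ of the row insertion into $R_iR_{i+1}$ is subsequently row inserted into $R_{i+1}R_{i+2}$, the row $R_{i+1}$ is altered in at most two positions: its right position $c_{i+1}$ and its left position $d_{i+1}$. By Lemma~\ref{l:bumpingpath} the right and left bumping paths are weakly decreasing, so $c_{i+1}\le c_i$ and $d_{i+1}\le d_i$, i.e.\ the boxes of $R_{i+1}$ that change lie weakly to the left of those already changed in $R_i$. Combining this with Lemma~\ref{lem:insertion_size} (right insertion strictly increases values, left insertion strictly decreases them), one checks case by case that no forbidden configuration $(i)$--$(v)$ of Lemma~\ref{lem:config} can arise in $R_i'R_{i+1}'$: any candidate configuration must involve a changed box, and the positional monotonicity together with the size inequalities forces the relevant comparisons to go the admissible way. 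Condition (b) at the seam then follows since the first entry of $R_{i+1}'$ is at most that of $R_{i+1}$ while the first entry of $R_i'$ is at least that of $R_i$; the only delicate case, in which both change simultaneously, is handled directly using that the value inserted into $R_{i+1}$ is the value $b$ bumped out of $R_i$, which is strictly below the new first entry of $R_i'$.

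The main obstacle is condition (c). Unlike the defining conditions of more familiar tableau families, checking the column rule~\eqref{eq:row-rule} requires inspecting the entire intermediate stretch of the reading word between two vertically adjacent boxes, so a change made to repair one vertical adjacency can in principle spoil another several columns away; and because a single Kra\'skiewicz--Hecke row insertion may alter entries in two different rows (the pseudo SDT detour through rules (R2) and (L1), together with two-sided bumping within a row), these interactions compound as one descends the bumping path. The output formulas in (R3) and (L2), with their $\min$/$\max$ over $A_d$ restricted to an interval, are engineered precisely to select the unique value avoiding every such clash, and the bulk of the work is the case analysis verifying this and confirming that the induced changes propagate harmlessly; Lemmas~\ref{l:bumpingpath} and~\ref{lem:insertion_size} and Corollary~\ref{cor:no_dip} are the load-bearing tools that keep the analysis finite.
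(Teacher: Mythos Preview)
Your plan is essentially the paper's approach: reduce to the forbidden configurations of Lemma~\ref{lem:config}, use Lemma~\ref{l:bumpingpath} for the shape, and lean on Lemma~\ref{lem:insertion_size} and Corollary~\ref{cor:no_dip} to control the values that move. The paper organizes the case analysis differently --- it fixes one of the five configurations, fixes which entry of that configuration is the value just placed, and fixes whether it arrived via right or via left insertion, yielding roughly twenty cases labelled like $(ii,b,\mathbf{R})$ or $(iv/v,x,\mathbf{R})$ --- whereas you organize by row pair and by whether one or both rows have been altered. Both organizations cover the same ground; the paper's has the advantage that the ``seam'' cases (your second phase) are not separated out but appear naturally as the cases where the newly placed entry sits in the lower row of the configuration, and several of those cases (e.g.\ $(iv/v,x,\mathbf{R})$) require reaching back to the insertion into the row above exactly as you anticipate.

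Two points to tighten. First, your claim that ``the first entry of $R_{i+1}'$ is at most that of $R_{i+1}$'' is false as stated: if left insertion in $R_{i+1}$ lands at $j=1$ via (L1), the first entry strictly increases. You seem to sense this (``the only delicate case \dots is handled directly''), but the sentence before it is simply wrong and the repair is not the one you sketch --- the bound you need is that the new first entry of $R_{i+1}'$ is strictly below the new first entry of $R_i'$, and this does not follow just from $b$ being below the new first entry of $R_i'$, since the value placed in position $1$ of $R_{i+1}$ is the right-insertion output $a'>b$. The paper handles this inside the configuration-$(i)$ cases rather than via a blanket monotonicity claim. Second, what you have written is an outline, not a proof: the phrases ``one checks case by case'' and ``the bulk of the work is the case analysis verifying this'' are exactly where the paper spends its effort, and several of those cases (notably $(ii,b,\mathbf{L})$, $(iii,y,\mathbf{R})$, $(iv/v,x,\mathbf{R})$, $(iv/v,y,\mathbf{L})$) are genuinely delicate and require tracking two consecutive insertion steps. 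Your framework is right, but the proof is in those cases.
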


\begin{proof}
    By Lemma~\ref{l:bumpingpath}, we have that row insertion always outputs a tableau of a shifted shape as the only way for the tableau to not have a shifted shape is to have right insertion terminate to make the number of boxes in row $R_k$ equal to the number of boxes in row $R_{k-1}$.
    However, in that case the right bumping path would go from a value less than or equal to $k-1 + \lambda_{k-1}$ to $k + \lambda_{k-1}$ contradicting that the right bumping path is a weakly decreasing sequence.

    By Lemma~\ref{lem:config}, a strict decomposition tableau has unimodal rows and avoids the five following configurations:
    \[\ytableausetup{nosmalltableaux}
(i)\;\begin{ytableau}
    a & \cdots & \\
    \none & \cdots & b
\end{ytableau},
(ii)\;\begin{ytableau}
    \; & \cdots & a & \cdots & \\
    \none & \cdots & c & \cdots & b
\end{ytableau},
(iii)\;\begin{ytableau}
    \; & \cdots & v & z & \cdots & \\
    \none & \cdots & \cdots & x & \cdots & y
\end{ytableau},
(iv)\;\begin{ytableau}
    y & \cdots & z\\
    \none & \cdots & x
\end{ytableau},
(v)\;\begin{ytableau}
    \; & \cdots & y & \cdots & z\\
    \none & \cdots & \; & \cdots & x
\end{ytableau}
\]
    with $a\leq b < c$, $x < y \leq z$, and $v < z$.
In this argument, we can treat (iv) and (v) together.
    
    Each insertion step preserves unimodality of rows with the exception of (R2).
    However, (R2) is always followed by (L1) meaning that unimodality of the rows will be regained after the next insertion step.
    Therefore, the result will follow if we show an insertion step resulting in the tableau $T'$ containing a bad configuration can only occur when $T$ is not a (pseudo) strict decomposition tableau.
    We will denote each case as a triple: the configuration found in $T'$, the element in the configuration that was inserted, and $\textbf{R}$ or $\textbf{L}$ depending on whether the insertion was right insertion or left insertion.
    For each case let $t$ be the element that was bumped out by the whatever created the bad configuration.
    Further, let $R = r_1 \dots r_\ell$ be the top of row of the configuration and $S = s_2 \dots s_k$ the bottom row.
    Note if $t$ was bumped by $u$ via right insertion then $t > u$ and if via left insertion $t < u$.\\

    \noindent\textbf{Case (i, a, R)}: This is only possible if $R$ is increasing.
    Then $r_1$ witnesses $r_2$ over $s_2$, so this is impossible.\\

    \noindent\textbf{Case (i, a, L)}: As $t < a \leq b$ we have that $t$ and $b$ form configuration ($i$).\\

    \noindent\textbf{Case (i, b, R)}: As $a \leq b < t$ we have that $a$ and $t$ form configuration ($i$).\\

    \noindent\textbf{Case (i, b, L)}: For $b$ to be left inserted into $S$, either $b$ was already in $S$ or it was in the row below.
    In the first case we already have configuration ($i$) with $a$ and $b$.
    In the second case, in order to avoid configuration ($i$) the first element $s_2$ of $S$ must be greater than $b$.
    However, that would mean $a \leq b < s_2$ forms configuration ($i$).\\

    \noindent\textbf{Case (ii, a, R)}: As $t$ is bumped via right insertion we have $t \in R^{\uparrow}$, and as $c > b$ we have that $c\in S^{\downarrow}$.
    Therefore as $b < c$, $b$ witnesses $t$ over $c$.\\

    \noindent\textbf{Case (ii, a, L)}: As $t < a \leq b < c$ we have that $t$, $b$, and $c$ form configuration ($ii$).\\

    \noindent\textbf{Case (ii, b, R)}: If $t < c$, then $a$, $t$, and $c$ form configuration ($ii$), as $a \leq b < t$.
    Therefore, assume $t \geq c$.
    Then $c\in S^{\downarrow}$ as otherwise $b$ would have bumped $c$.
    In order for $b$ to be right inserted into $S$ we must have had $b$ in $R$ or $S$ before the last insertion step.
    If $b$ was in $R$, we have $b \in R^\downarrow$ as it is the output of left insertion.
    Then $b$ was to the left of $a$, so it would witness $a$ over $c$ since $c \in S^\downarrow$ and $a < b < c$.
    Therefore, we have $b\in S$.
    As $c\in S^{\downarrow}$ we must have that $b$ is right of $c$, so $a$, $b$, and $c$ already formed configuration ($ii$).\\

    \noindent\textbf{Case (ii, b, L)}: If $t\geq a$, we see $a$, $t$, and $c$ form configuration ($ii$) as $a \leq t < b < c$.
    Therefore, assume $t < a$.
    For $b$ to be left inserted into $S$, the element $g$ that was right inserted into $S$ must be less than $b$ by Lemma~\ref{lem:insertion_size}.
    Furthermore, before right insertion of $g$ we must have $b\notin S^{\uparrow}$, else $a$, $b$, and $c$ would already form configuration ($ii$).
    This implies the insertion rule for $g$ is (R3) and $b$ is in the row below $S$.
    Then (R3) guarantees $b$ is below the entry $h$ to the right of $g$ and $h > b$.
    If $h < c$, then $a$, $h$ and $c$ form configuration (ii).
    Therefore $h \geq c$, in which case $b$, $c$ and $h$ from configuration (iv/v).
    \\

    \noindent\textbf{Case (ii, c, R)}: Impossible as $c > b$ meaning $c\in S^{\downarrow}$.\\

    \noindent\textbf{Case (ii, c, L)}: 
    If $t > b$, then $a\leq b < t$ forms configuration ($ii$).
    Therefore, assume that $t\leq b$.
    For $c$ to be left inserted into $S$, it must be the output of the right insertion of some $h$ into $S$ with $h<c$ by Lemma~\ref{lem:insertion_size}.
    Further, we must have $b < h$ as otherwise $h$ would bump an less than $c$ as $t \leq b$ implies $b\in S^{\uparrow}$.
    Let $g$ be the element that was left inserted into $R$ to output $h$.
    By Lemma~\ref{l:bumpingpath} we have that whatever $g$ bumps is weakly right of the position where $a$ will be (if not there already).
    Therefore, we have that $g \leq a$ as it either bumps $a$, an element right of $a$ in $R^{\downarrow}$, or is $a$.
    By Lemma~\ref{lem:insertion_size} we have that $g > h$, however we also have that $g \leq a \leq b \leq h$.
    Thus, we could not have $c$ bump $t\leq b$.\\

    \noindent\textbf{Case (iii, v, R)}: As $t$ bumps $v$ not $z$ we must have that $t < z$.
    Therefore, $x < y \leq z$ and $t < z$ forms configuration ($iii$).\\

    \noindent\textbf{Case (iii, v, L)}: As $t < v$, we have that $x < y \leq z$ and $t < z$ forms configuration ($iii$).\\

    \noindent\textbf{Case (iii, x, R)}: As $z\in R^{\uparrow}$ and either $t < y$ or $t\in S^{\downarrow}$ we have that $y$ witnesses $z$ over $t$.\\

    \noindent\textbf{Case (iii, x, L)}: As $z\in R^{\uparrow}$ and $t\in S^{\downarrow}$ we have that $y$ witnesses $z$ over $t$.\\

    \noindent\textbf{Case (iii, y, R)}: If $t\leq z$, then $x$, $t$, $z$, and $v$ form configuration ($iii$).
    Therefore, assume $t > z$.
    In order for $t$ to be bumped by $y$ we require that the element $s$ directly left of $t$ is less than $y$.
    Further, $s \leq x$, as otherwise $s$ forms configuration ($iii$) with $x$, $z$, and $v$.
    If $s$ is not $x$ (in position), then $x\in S^{\downarrow}$ so $s$ witnesses $z$ over $x$.
    Therefore, $x$ is $s$, i.e., $x$ must be directly left of $t$.
    In order for $y$ to be right inserted into $S$ we require $y\in R$ or $y\in S$ before the prior insertion step.
    If $y\in R$ then $v < z$ implies $z \in R\uparrow$ so $y$ would be left of $z$.
    Therefore $x$, $y$, and $z$ would form configuration ($iv/v$), so we can assume $y\in S$.
    However, by definition of (L3) this would require that $y\in S^{\uparrow}$ which cannot happen as $x < y < t$ with $x$ and $t$ next to each other.\\

    \noindent\textbf{Case (iii, y, L)}: This requires that $t\in S^{\downarrow}$, but that would mean $x\in S^{\downarrow}$.
    This is a contradiction as $y$ could not bump $t$ with $x > t$ left of $t$.\\

    \noindent\textbf{Case (iii, z, R)}: As $t > z$ we have that $x$, $y$, $t$, and $v$ form configuration ($iii$).\\

    \noindent\textbf{Case (iii, z, L)}: As $v < z$ this would require left inserting $z$ into $R^{\uparrow}$ which is impossible.\\

    \noindent\textbf{Case (iv/v, x, R)}: If $t < y$, then $t < y \leq z$ forms configuration ($iv/v$).
    Therefore, assume that $t\geq y$.
    Let $g$ be the element left inserted in the step before and let $h$ be the element right inserted in the step before that.
    By Lemma~\ref{l:bumpingpath} $h$ inserts into a location weakly right of the column in $R$ containing $z$.
    Thus, $h\geq z$ as either $h=z$ or $h$ bumps an element right of $z$.
    Then by Lemma~\ref{lem:insertion_size} we have that $h<g$, thus $g > z$.
    
    If $y\in R^{\downarrow}$, since $g > z \geq y$ we see $g$ must bump an element $\geq y$ unless we are in case (L3), hence $g$ is already in $R\downarrow$ in column $j$.
    If we are in case (I), necessarily $x = \max(B)$ as elements of $A$ are $\geq y$, so $y$ is the dip of $R$ and the element left of $y$ is already $g$.
    Since $x = \max(B)$, either $x$ is immediately below $y$ or in $S\uparrow$.
    In the former case, $y$ is positive since it is the dip so $t$ witnesses $y$ over $x$.
    In the latter case, $x$ would not bump $t$.
    If we are in case (II), note that $r_{j+1} \geq y > x$ so we must output $x = s_{j+1}$.
    However, then $x \in S \uparrow$, which would again prevent $x$ from bumping $t$ when we right insert $x$.
    
    Therefore, we must have $y\in R^{\uparrow}$ with some $v <y$ directly left of $y$.
    Let $u$ be the element directly below $y$.
    We have that either $u\in S^{\downarrow}$ or $u\in S^{\uparrow}$ with $u < x$ so that $x$ does not bump $u$.
    If $u\in S^{\downarrow}$, then $x$ could not be in $R$ or in $S$ right of $u$ before left insertion of $g$ as otherwise $x$ would witness $y$ over $u$.
    Therefore, in this case we also have $u < x$.
    Thus, after right insertion of $x$ we have that $u < x < y$ and $v < y$ forms configuration ($iii$).
    As shown in \textbf{Case (iii, y, R)} this cannot happen.\\

    \noindent\textbf{Case (iv/v, x, L)}: As $t < x$ we have that $t$, $y$, and $z$ form configuration ($iv/v$).\\

    \noindent\textbf{Case (iv/v, y, R)}: As $t\in R^{\uparrow}$ and left of $z$ we have that $t < z$.
    Then as $t > y > x$ we have that $x$, $t$, and $z$ form configuration ($iv/v$).\\

    \noindent\textbf{Case (iv/v, y, L)}: If $t > x$, then as $t < y \leq z$ we have that $x$, $t$, and $z$ form configuration ($iv/v$).
    Therefore, assume that $t \leq x$.
    Before left insertion of $y$ we have $y\not\in R$ as otherwise $x$, $y$, and $z$ would already form configuration ($iv/v$).
    Thus, $y\in S$.
    Let $g$ be the element that was right inserted in order for us to end up left inserting $y$.
    Let $h$ be the element directly below where $g$ ends up after being inserted.
    By definition of (R3) we have $y > h$ and $x \leq h$ in order for $y$ to be the element left inserted instead of $x$.
    If $h\in S^{\uparrow}$ we must have $h$ weakly right of $x$ and thus $g$ weakly right of $z$, so $y$ is strictly right of $x$ and witnesses $z$ over $x$.
    If $h\in S^{\downarrow}$, then $g$ over $h$ would be witnessed by $x$ unless $h = x$.
    In that case, we also have $z = g\in R^{\uparrow}$ and $t < x$ as if $t$ is left of $h\in S^{\downarrow}$.
    Therefore $t$, $h$, $z$, and the element directly left of $z$ form configuration ($iv/v$).\\

    \noindent\textbf{Case (iv/v, z, R)}: As $t > z$, we have that $x$, $y$, and $t$ form configuration ($iv/v$).\\

    \noindent\textbf{Case (iv/v, z, L)}: As $t\in R^{\downarrow}$ and is right of $y$ we have that $y > t$.
    However, then we could never bump $t$ by left inserting $z$.

\end{proof}

Two words with the same Kra\'skiewicz tableaux $P$ are necessarily reduced words for the same signed permutation.
Similarly, the Kra\'skiewicz--Hecke insertion tableau for a word determines it 0-Hecke product.

\begin{proposition}
    \label{p:reading-word}
    Let $T$ be a SDT and $\rho(T)$ be its reading word.
    For $a\in \mathbb{N}$ and $T' = T \leftarrow a$ we have that $\rho(T)a \equiv_{H} \rho(T')$.
\end{proposition}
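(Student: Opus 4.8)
The plan is to prove $\rho(T)a \equiv_H \rho(T')$ by tracking the effect of each individual row insertion step on the reading word and checking that the 0-Hecke product is preserved at every step. Recall that a single Kra\'skiewicz--Hecke insertion of $a$ into $T$ consists of a sequence of row insertions into $R_1R_2$, then $R_2R_3$, and so on; it therefore suffices to prove the two-row statement: if $RS$ is a two-row SDT and row insertion of $a$ into $RS$ outputs $b$ and the new tableau $R'S$, then $(\rho(RS))\,a \equiv_H b\,(\rho(R'S))$, i.e.\ $S\,R\,a \equiv_H b\,S\,R'$. Composing these across all rows, the intermediate outputs telescope and we are left with $\rho(T)\,a \equiv_H \rho(T')$ (the final output $\infty$ contributing nothing, as it corresponds either to a new box being created or to termination with $R'=R''$). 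The reduction to the two-row case and this telescoping are the organizational backbone; the content is in verifying the two-row claim case by case.

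For the two-row claim I would argue as follows. Row insertion is right insertion followed by left insertion, so I would prove two sub-claims: (1) right insertion of $a$ into $R$ producing output $a'$ and row $R''$ satisfies $R\,a \equiv_H a'\,R''$ in the 0-Hecke monoid of the relevant rank (here using that the entries involved all lie in a window where the braid and commutation relations among the $s_i$ behave predictably); and (2) left insertion of $a'$ into $R''$ with row below $S$, producing output $b$ and row $R'$, satisfies $S\,a'\,R'' \equiv_H b\,S\,R'$. Chaining (1) and (2) gives $S\,R\,a \equiv_H S\,a'\,R'' \equiv_H b\,S\,R'$, as desired. For sub-claim (1): in case (R1) the word $R\,a$ has $r_i$ replaced by $a$ with $r_i$ popped to the front — this is a sequence of commutations (the entries strictly between position $i$ and the end of $R$ that are crossed differ from $a$ and $r_i$ by at least $2$, since $R^\uparrow$ is strictly increasing) followed by a single move that is either a commutation or, when $|a-r_i|=1$, handled by the unimodality/SDT structure; in case (R2) one uses $s_i s_{i+1} s_i = s_{i+1} s_i s_{i+1}$ (the braid relation) precisely because $a=r_i$ and $r_{i-1}>r_{i+1}$ create the pattern that a braid move rewrites, which is exactly why the output lands in $r_{i+1}$'s slot; in case (R3) the key point is the idempotent relation $s_a \circ s_a = s_a$: since $a=r_i$ already appears, appending $a$ and commuting it leftward to meet its copy absorbs it, and the output $a' = \min(\{r_{i+1}\}\cup((s_i,r_{i+1})\cap A_i))$ is forced by which element obstructs that commutation — this is where Lemma~\ref{l:top-word}-style reasoning about obstructing values $a\pm1$ is used. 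Sub-claim (2) for left insertion is analogous, with (L1) a commutation argument, and (L2)(I),(L2)(II) again using a braid move or the idempotent relation according to the case, with the SDT column condition (Lemma~\ref{lem:config}) guaranteeing that no intermediate entry prevents the required rewriting.

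I expect the main obstacle to be case (R3) together with (L2), precisely because — as the authors flag in the introduction — checking the SDT column condition requires examining the intermediate segment of the reading word, and here the output value $b$ (or $a'$) is defined as an extremum over the set $A_d$ of entries lying in a window spanning both rows. Verifying $S\,a'\,R'' \equiv_H b\,S\,R'$ in these cases means showing that commuting the inserted letter through the reading word until it either hits its duplicate (triggering idempotence) or participates in a braid triple produces exactly the claimed output and exactly the claimed modified row, and that all the bypassed letters genuinely commute — this requires a careful reading of which entries can appear between the relevant positions, leaning on unimodality of each row, condition (b), and the avoidance of configurations $(i)$–$(v)$ from Lemma~\ref{lem:config}. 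Once these are in hand, one should also note that when right insertion triggers (R2) the intermediate $R''$ is only a pseudo-SDT, but this is harmless: the subsequent (L1) step restores unimodality, and the Hecke-equivalence computation is insensitive to that transient failure since it only manipulates words. The bookkeeping is heavy but each case is a short rewriting in the 0-Hecke monoid, so modulo the case analysis the argument is routine.
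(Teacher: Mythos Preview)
Your overall architecture---reduce to the two-row situation, then do a case analysis over (R1)--(R3) and (L1)--(L2)---is exactly what the paper does. But the precise form of your two-row claim is wrong, and this is not cosmetic.

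You assert $S\,R\,a \equiv_H b\,S\,R'$. The correct statement, and the one the paper proves, is $S\,R\,a \equiv_H S\,b\,R'$: the bumped letter $b$ lands \emph{between} $S$ and $R'$, not to the left of $S$. Your version does not telescope: after one step you would have $R_\ell\cdots R_3\,b_1\,R_2\,R_1'$, but the next row insertion needs $R_3\,R_2\,b_1$, with $b_1$ on the right of $R_2$. More seriously, your sub-claim (1), $R\,a \equiv_H a'\,R''$, is not provable in case (R3): there $R'' = R$ and $a'$ is defined as a minimum over $A_i$, which contains entries of the \emph{lower} row $S$. When $a'$ is realized by some $s_k \in S$, the equivalence you want is obtained by duplicating that $s_k$ inside $S$ (using the idempotent relation) and commuting a copy up through $S^\uparrow$ and $R^\downarrow$---so $S$ must appear in the statement. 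The paper's intermediate form is $S\,R\,a \equiv_H S\,R^\downarrow\,a'\,(R'')^\uparrow$, with $a'$ sitting between the decreasing and increasing parts of $R$, not at the far left; the analogous issue recurs for (L2), where $b$ can equal $s_{j+1}$ or an element of $A_j \cap S$.

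A second omission: you never mention the long braid relation $s_0 s_1 s_0 s_1 = s_1 s_0 s_1 s_0$. The paper isolates the case $a=0$ with $r_i=0,\,r_{i+1}=1$ and treats it first, precisely because the rewriting there uses the long braid rather than an ordinary braid, and the (R3)/(L2) analysis in that case produces $b=0$ via a genuinely different manipulation. Your sketch of (R2) invoking $s_i s_{i+1} s_i = s_{i+1} s_i s_{i+1}$ would break down exactly here. Finally, the appeal to Lemma~\ref{l:top-word} is misplaced: that lemma is specific to fully commutative (top) elements, whereas here $w$ is arbitrary; the obstruction analysis in (R3)/(L2) instead leans on the SDT configuration-avoidance of Lemma~\ref{lem:config}.
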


\begin{proof}
    The result follows if when $a$ is row inserted into the two-row SDT $RS$ bumping $b$, we have $SRa \equiv_{H} SbR'$.
    Throughout this proof, we fix notation: $p$ is the dip of $S$; $q$ is the dip of $R$; $i$ and $j$ are the indices defined in right and left row insertion, respectively.
    
    First, we treat the case where $a = 0$ and $r_i = 0, r_{i+1} = 1$ in $R$.
    In this case, we will show $b = 0$. 
    \begin{enumerate}
        \item [(R2)/(L1)] Assume $r_{i-1} > 1$ so that $r_{i+1}$ is the only 1 in $R$.
        When $0$ is inserted $0$ will bump $r_{i+1} = 1$ which in turn will bump $r_{i} = 0$.
        The overall result is $R'$ will be $R$ with $r_{i}$ and $r_{i+1}$ swapped and the output to the next row is $0$.
        To see that $SR0 \equiv_{H} S0R'$ commute $0$ up to $r_{i+1} = 1$, then commute $r_{i}=0$ up to $S$ as $r_{k} > 1$ for $k < i$.
        This effectively swaps the values of $r_{i}$ and $r_{i+1}$, thus showing that $SR0 \equiv_{H} S0R'$.

     
        \item [(R3)/(L2)] Assume $r_{i-1} = 1$. Note that in this case (R3) will always output $r_{i+1} = 1$ and (L2) will always output $0$, both leaving $R$ unchanged.
        To see $SR0 \equiv_{H} S0R$, first commute $a = 0$ up to $r_{i+1} = 1$, then perform a long braid with $r_{i-1} = 1$, $r_{i} = 0$, $r_{i+1} = 1$, and $a = 0$ to get $0$ on the other side of $101$.
        Next, commute the left $0$ past the rest of $R^\downarrow$.

      
    \end{enumerate}

    Now assume either that $a \neq 0$ or $R$ does not contain $0$ followed by $1$.
    Let $a'$ be the entry bumped when right inserting $a$, and recall that $a' > a$.
    First we will show $SRa \equiv_{H} SR^{\downarrow}a'R''^{\uparrow}$.
    \begin{enumerate}
        \item [(R1)] Let right insertion of $a$ into $R$ trigger $(R1)$.
        Then $r_i \geq a+1$ is not the dip.
        Therefore, we can commute $a$ up to $r_i$ since all entries to its right are greater than $r_i$.
        Since $r_{i-1} < a$ as $i$ is not the dip, we can commute $r_i$ past $r_q$, giving us that $SRa \equiv_{H} SR^{\downarrow}a'R'^{\uparrow}$.
        \item [(R2)] Let right insertion of $a$ into $R$ trigger $(R2)$, so $r_i = a$.
        Recall $x,y \in \mathbb{N}$ with $\{x,y\} \neq \{0,1\}$ we have the short braid relation $xyx\equiv_{H} yxy$.
        Therefore we can commute $a$ up to $r_{i+1}$, to get the desired result, taking $r_i \in R^{\downarrow}$.
        \item [(R3)] Let right insertion of $a$ into $R$ trigger $(R3)$. We split into cases, depending on where $a'$ is located.
        When $a'=r_{i+1}$, the argument is the same as (R2) except after braiding we commute the left instance of $a' = r_{i+1}$ past the dip.

        Otherwise, $a' \in (s_i, r_{i+1})$, which implies $s_i < r_{i+1} - 1$ so we cannot have $r_{i+1} = a+1$ without $a'$ witnessing $s_{i} < r_{i} = a$.
        Therefore, we can commute $a$ up to $r_{i}=a$ and annihilate $a$.
        It remains to get a copy of $a'$ directly after $r_q$.

        Suppose that $a' \in S$, then $a'$ is to the right of $s_i$, which implies that the element above it is greater than or equal to $r_{i+1}$.
        Therefore there are no elements in $(a'',r_{i+1}]$ between $a'$ and $r_i$.
        If there is no $a'-1$ between $a'$ and $r_q$, then a copy of $a'$ could commute all the way up to $r_q$ and we are done.
        Suppose there is an $a'-1$ between $a'$ and $r_q$ (note this means that $a'-1\neq 0$), this requires that $a'-1\not\in (s_i, r_{i+1})$ by definition of $a'$ meaning that $a' = s_i+1$.
        Then as there are no elements in $(a',r_{i+1}]$ between $a'$ and $r_i$ we can commute the pair of $s_i$ and $a'$ up to the $a'-1$, possibly annihilating an $a'$ immediately preceding the $a'-1$ in $R$.
        Then duplicate $s_i$ and perform a short braid relation with the $s_i$, $a'$, and $a'-1$.
        Finally, commute back $a'-1$ and $a'$, possibly leaving behind a duplicate of the left $a'$, and commute forward up to the dip the extra copy of $a'$.
            
        Suppose that $a'\in R^\downarrow$, then $a'$ is to the left of $r_i$ in $R$.
        If $a'-1 \not\in R^{\downarrow}$, we can copy $a'$ and commute the copy up to $r_q$.
        If $a'-1\in R^{\downarrow}$, then by definition of $a'$ we must have that $a'-1 = s_i$, as otherwise we would have a different value for $a'$.
        Further, there can be no $s_i-1$ in $S$ after $s_i$, as the element above such a $s_i - 1$ would be witnessed by $a'$.
        The case where $s_{i}+1$ is to the right of $s_i$ in $S$ is treated above.
        Therefore, we can duplicate $s_i$ in $S$, then commute the duplicate up to $a'$, short braid the duplicate, $a'$, and $a'-1$, and then commute the copy of $a'$ made up to $r_q$.
    \end{enumerate}
    Now we left insert whatever $a'$ was the output from right insertion.
    In right insertion we addressed the case where $a = 0$ and $R$ contains $0$ followed by $1$, so for left insertion we already have the case where $a' =  1$ and $R''$ contains $1$ followed by $0$.
    We now show $SR^{\downarrow}a'R''^{\uparrow}\equiv_{H} SbR'$.
    \begin{enumerate}
        \item [(L1)] Let left insertion of $a'$ into $R''$ trigger $(L1)$.
        This then means that $r_j$ is the greatest element in $R''^{\downarrow}$ less than $a'$ and $a'\not\in R''^{\downarrow}$.
        Therefore, we can commute $a'$ up to $r_j$ and then commute $r_j$ past the rest of $R''^{\downarrow}$ for the desired outcome.
        \item [(L2)] Let left insertion of $a'$ into $R''$ trigger $(L2)$, so $a' = r_j$.
        We have already treated the case $a' = 1$.
        First assume $r_{j+1} = a'-1$ and $j+1$ is not the dip of $R''$.
        In this case condition $(a)$ in $(L2)$ is always false, so either $b = \max A$ or $b = r_{j+1}$.
        When $b = \max A$, for $A$ to be nonempty we have $r_j < s_j$ as $r_{j+1} = r_{j}-1$.
        However as $r_j \in R''^{\downarrow}$ and $s_j \in S^{\downarrow}$, so $b$ would witness $r_j$ over $s_j$.
        Therefore, we must have $b= r_{j+1} = a'-1$.
        Note $b > 0$ since $b= r_{j+1}$ is not the dip.
        Therefore we can braid $r_{j}$, $r_{j+1}$, and $a'$ to get a copy of $b = a'-1$ on the other side, which can then commute past the rest of $R''$.\\
        Now we consider the cases where $r_{j+1} \neq a'-1$ or is the dip.
        In both cases $a'$ can freely commute to $r_{j} = a'$ to be annihilated as the insertion element is considered to be past the dip.
        Thus, we aim to show that $SR''\equiv_{H} Sa'R'$.\\
        First, suppose that $b = \max A$ or $b = \max B$.
        If there is no $b + 1$ between $b$ and the end of $S$ we can freely commute $b$ to the end of $S$ and be done.
        Otherwise, since $b+1 \notin A$ or $B$ (depending on the case), we see $b + 1 \geq s_j$, but $b < s_j$  so we have $b + 1 = s_j$ .
        If $b\in R$, necessarily $b > a'$ meaning that $s_j > a' = r_j$, which $a'$ would witness.
        Therefore, $b\in S$.
        We create a copy of $s_j$ then commute the copy to $b = s_j-1$.
        Then perform a braid relation and commute out of $S$ the extra copy of $a'$.\\
        Now suppose that $b = s_{j+1}$, $b \neq \max A$, and $a \neq \max B$.
        This means $s_{j+1} \in S^{\uparrow}$ by condition $(b)$ from $(L2)$.
        Note condition $(c)$ from $(L2)$ prevents us from having $s_{j+1}+1 \in S^{\uparrow}$ as either $s_{j+1}+1$ witnesses $s_{j+1} < r_{j+1}$ or $s_{j+1} < r_{j} < s_{j+2}$, meaning $s_{j+2} > s_{j+1}+1$.
        Therefore, a copy of $b$ made by $s_{j+1}$ can freely commute out of $S$.\\
        Finally, when $b = r_{j+1}$ since we have assumed either $r_{j+1}$ is the dip or $r_{j} \neq b + 1$ there is no obstruction to commuting a copy of $b$ past the rest of $R''$.
    \end{enumerate}
    This completes our proof.
\end{proof}

For $\lambda = (\lambda_1,\dots,\lambda_k)$ a strict shape, we will see for words in $\HH(w_\lambda)$ that $Q_KH$ and $\res$ are the same map.
Towards that end, let $(c_1 < \dots < c_{\lambda_1 -k})$ so that $[0,\lambda_1-1] = \{\lambda_1,\dots,\lambda_k,c_1,\dots,c_{\lambda_1 - k}\}$ and define $P^\lambda$ as the tableau with $i$th row
\[
\lambda_{i+1},\dots,\lambda_k, 0(=\lambda_{k+1}),c_1,\dots,c_{\lambda_i - k +i},
\]
so that $P_\lambda$ has shape $\lambda$.
By construction, note that every column of $P^\lambda$ has the same entry.
Moreover, we obtain $P^{(\lambda_2,\dots,\lambda_k)}$ from $P^\lambda$ be removing the first row.
For example, with $\lambda = (5,3,2)$, we have
\[
P^{(5,3,2)} = \begin{ytableau}
    3 & 2 & 0 & 1 & 4\\
    \none & 2 & 0 & 1\\
    \none & \none & 0 & 1
\end{ytableau}\,,
\]
while the second and third rows give $P^{(3,2)}$ and the last is $P^{(2)}$.

Note that $\lambda_i - 1$ is the largest entry in the $i$th row of $P^\lambda$.
When $(i,j)$ is an outer corner of $D_\lambda$, we then have $P^\lambda_{ij} = \lambda_i - 1$ since $\lambda_{i+1} < \lambda_i - 1$.

\begin{theorem}\label{thm: kr_undo_res}
Let $T\in \ShSVT(\lambda)$, and let $\bfa = \res(T)$.
Then $KH$ maps $\bfa$ to $(P^\lambda,T)$.
\end{theorem}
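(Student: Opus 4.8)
The proof goes by induction on $n=|T|$; the case $n=0$ (so $T=\lambda=\varnothing$ and $\bfa$ is empty) is trivial. For $n\ge 1$, let $(i,j)$ be the cell of $T$ containing the largest entry $n$, let $c=j-i$ be its content, and let $T'$ be the tableau obtained by deleting $n$ from that cell. Then $T'\in\ShSVT(\lambda')$, where $\lambda'=\lambda$ if $|T_{ij}|\ge 2$ and $\lambda'=\lambda\setminus\{(i,j)\}$ if $T_{ij}=\{n\}$; in either case $(i,j)$ is an outer corner of $\lambda$, so $j=\lambda_i+i-1$ and $c=\lambda_i-1$ (with $c=\lambda'_i$ in the second case). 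Since $n$, being maximal, occupies the last parenthesization of $\res(T)$ by itself with positive part the single content $c$, Definition~\ref{d:res} gives $\res(T)=(\res(T'),c)$. As $|T'|=n-1$, the inductive hypothesis yields $KH(\res(T'))=(P^{\lambda'},T')$, so by the recursive definition of $KH$ (Definition~\ref{d:full-insertion}) it suffices to prove: (a) row-inserting $c$ into $P^{\lambda'}$ produces $P^\lambda$; and (b) the corresponding update of $Q'=T'$ produces $T$.

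The entire weight of the argument is in (a), which I would establish by tracking the right bumping path of the insertion against the rigid structure of the tableaux $P^\mu$: every column of $P^\mu$ is constant, deleting its first row gives $P^{(\mu_2,\dots)}$, and its $m$th row is the set $\{0,1,\dots,\mu_m-1\}$ written unimodally with dip $0$, its decreasing part being $\mu_{m+1}>\mu_{m+2}>\cdots$. Because $(i,j)$ is removable one checks $e_j=c$, where $e_j$ is the common entry of column $j$; I claim the insertion of $c$ runs straight down column $j$, cascading through rows $1,\dots,i-1$ of $P^{\lambda'}$ while re-emitting $c$ at each step, and terminates in row $i$. When $\lambda'\ne\lambda$ one has $\lambda_i=\lambda'_i+1=c+1$, so rows $m<i$ of $P^{\lambda'}$ and $P^\lambda$ differ only in that the occurrences of $c$ and $c+1$ are interchanged; inserting $c$ performs exactly this interchange in each such row and passes $c$ downward, and at row $i$ the value $c=\lambda'_i$ exceeds every entry of that row, so (R1) appends it in column $j$ and (L1) halts at once, giving $P^\lambda$; the ``new cell'' clause of the recording rule then sets $Q_{ij}=\{n\}$, turning $T'$ into $T$. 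When $\lambda'=\lambda$, the cell below $(i,j)$ is absent, so $c=\lambda_i-1$ sits at the last cell of row $i$ with the set $A$ governing (R3) lying strictly below content $c$; one checks rows $1,\dots,i-1$ are left unchanged and emit $c$, the row-$i$ insertion reproduces the row and halts, so $P^\lambda$ is unchanged; since $(i,j)$ being an outer corner makes $i$ the lowest row of $\lambda$ ending in column $j$, the ``same shape'' clause of the recording rule adds $n$ to the last cell of row $i$, i.e.\ to $Q_{ij}$, again turning $T'$ into $T$. Throughout, Lemma~\ref{l:bumpingpath} guarantees the cascade cannot create a box before reaching row $i$, and Corollary~\ref{cor:no_dip} together with Proposition~\ref{p:row-insertion} keep each intermediate tableau an honest strict decomposition tableau.

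The main obstacle is exactly the verification underlying the previous paragraph: one must run the case analysis through the right-insertion rules (R1)--(R3) and the left-insertion rules (L1)--(L2), including the subcases built from the sets $A_d$ and the dip positions, against the explicit entries of $P^\mu$, and show that in each row the insertion does precisely what is claimed and outputs $c$; keeping the two shape-change regimes and the attendant recording bookkeeping straight is the delicate part. It is probably cleanest to isolate, as a standalone lemma, the statement ``row-inserting the content of an outer corner cell into $P^\mu$ behaves as above and returns $P^{\mu}$ or $P^{\mu\cup\{(i,j)\}}$,'' proved by a downward induction on the rows, so that the induction on $n$ above reads off in one line.
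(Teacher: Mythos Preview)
Your proposal is correct and takes essentially the same approach as the paper: both argue by induction on the length of $\bfa$, split into the two cases according to whether the shape grows, and verify directly against the column-constant structure of $P^{\mu}$ that each row insertion either swaps $c$ with $c{+}1$ via (R1)/(L1) or leaves the row fixed via (R3)/(L2), in either case re-emitting $c$. Your suggestion to isolate the row-by-row cascade as a standalone lemma is precisely the nested induction the paper handles by observing that removing the first row of $P^{\mu}$ yields $P^{(\mu_2,\dots,\mu_k)}$ and then invoking the inductive hypothesis.
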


\begin{proof}
Since $\bfa = \res(T)$, we see $\bfa \in \HH(w_\lambda)$.
For $\bfa = (a_1,\dots,a_p)$ and $j \in [p]$, note that $\bfa[j] := (a_1,\dots,a_j) \in \HH(w_\mu^{(j)})$ for some strict shape $\mu^{(j)}$.
Since $w_\mu$ is vexillary for any $\mu$, Theorem~\ref{t:vexillary} shows $P_{KH}(\bfa[j])$ is unique.
We can now prove our result by downward induction on $k$.
The base case is immediate since any word $\bfa$ for $w_\lambda$ has $a_1 = 0$, so the $j = 1$ case follows.

Assume the result holds for up to $k$, so $(P^{\mu^{(k)}},T\mid_{[k]})$ is the image of $\bfa[k]$ under $KH$.
The next step is to insert $a_{k+1}$ into $P^{\mu^{(k)}}$ to obtain $P'$, with resulting recording tableau $Q'$.
There are two cases.

\noindent \textbf{Case 1:} $P' = P^{\mu^{(k)}}$.
In this case, we see $w_{\mu^{(k)}} \circ a_{k+1} = w_{\mu^{(k)}}$, so $k+1 \in T_{ij}$ and is not minimal in this cell since $(i,j) \in D_{\mu^{(k)}}$.
Then $(i,j)$ is an outer corner of $\mu^{(k)}$ so $j = \mu^{(k)}_i + i -1$.
Then $a_{k+1} = (j-i) = \mu^{(k)}_i - 1$.

We $a_{k+1}$ inserts into $P_{\mu^{(k)}}$ without changing it, terminating in row $j$.
To see this, for $R_\ell$ the $\ell$th row of $P^\lambda$ note $a_{k+1} \in R_\ell^\uparrow$ for $\ell \leq i$.
For the first step of insertion, we are in (R3).
If $i = 1$, we see $a_{k+1}$ is the largest entry in the first row, so we terminate in row $1$ as desired.
Otherwise, $a'$ is the smallest entry in $R_1^\downarrow$ greater than $a_{k+1}$, which is $\mu^{(k)}_i$.
Left inserting $\mu^{(k)}_i$ uses (L2), which bumps $a'' = \mu^{(k)}_i - 1 = a_{k+1}$ as this value is maximal in the appropriate interval.
We are now inserting $a_{k+1}$ into the second row of $P^{\mu^{(k)}}$, which is $P^\nu$ for $\nu$ obtained from $\mu^{(k)}$ by deleting the first row.
The result then follows by the inductive hypothesis.

\noindent\textbf{Case 2:} $P' \neq P^{\mu^{(k)}}$.
By the previous case, we see $\mu^{(k+1}) \neq \mu^{(k)}$.
Let $(i,j)$ be the unique cell in $D_{\mu^{(k+1)}} \setminus D_{\mu^{(k)}}$, so $j =  \mu^{(k+1)}_i +i -1 = \mu^{(k)}_i + i$.
Then $a_{k+1} = j-i = \mu^{(k)}_i$.

If $i =1$, when inserting $a_{k+1}$ we apply (R1) to append it to the end of the first row, giving $P' = P^{\mu^{(k+1)}}$ as desired.
Otherwise, let $R$ be the first row of $P^{\mu^{(k)}}$ and $R'$ the result after $KH$--inserting $a_{k+1}$ into $R$.
Since $\mu^{(k)}_i$ is in $R^\downarrow$ and $\mu^{(k)}_i + 1$ is in $R^{\uparrow}$, by applying rules (R1) and (L1) we swap these to obtain $R'$, inserting $\mu^{(k)}_i$ into the next row.
Applying induction on $i$ as in the previous case, this process will terminate by inserting $a_{k+1}$ at the end of the $i$th row while swapping $\mu^{(k)}_i$ with $\mu^{(k)}_i{+1} = \mu^{(k+1)}_i$ as desired.
This completes our proof.
\end{proof}

Case 2 in the above argument appears in the second author's thesis~\cite[Thm. 4.4~(a)]{hamaker2014bijective}.
It implies the equivalent result for tableaux that are not set--valued, where Kra\'skiewicz insertion is applicable.
It has not been previously published.

\subsection{Inverse insertion}

In order to show that Kra\'{s}kiewicz--Hecke insertion is a bijection we will first show that the map is injective and then create an explicit inverse map.

\begin{proposition}
    Let $M$, $N$, $T$ be SDTs and $x, y, z\in \mathbb{N}$.
    If right inserting $x$ into $M$ at row $k$ and right inserting $y$ into $N$ at row $k$ both output left inserting $z$ into $T$ at row $k$, then $M=N$ and $x=y$.
    Similarly, if left inserting $x$ into $M$ at row $k$ and left inserting $y$ into $N$ at row $k$ both output right inserting $z$ into $T$ at row $k+1$, then $M=N$ and $x=y$.
\end{proposition}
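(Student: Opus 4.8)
The plan is to prove both statements by exhibiting explicit inverses for a single right-insertion step and a single left-insertion step, from which injectivity is immediate. Observe first that right inserting $x$ into row $k$ of $M$ alters only that row, so $M$ and the resulting tableau $T$ agree in every row other than $k$, and likewise for $N$; writing $S$ for the common row $k+1$, $R$ for row $k$ of $M$, $R''$ for row $k$ of $T$, and $z$ for the pending value, it suffices to show that for fixed $S$ the assignment $(x,R)\mapsto(z,R'')$ given by right insertion is injective, since then $(z,R'')$ determines $(x,R)$, hence $M$ and $x$, so $M=N$ and $x=y$. The second statement reduces in the same way to injectivity, for fixed $S$, of the left-insertion assignment $(a',R'')\mapsto(b,R')$, where $a'$ is the value left inserted (the $x$ of the statement) and $b$ the value passed down to row $k+1$. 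Since (R2) is always immediately followed by (L1), and $T$ is required to be a genuine strict decomposition tableau, the composite (R2)+(L1) will be inverted as a single step; outside that composite every intermediate row is an honest strict decomposition tableau.

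For the inverse of right insertion, given $(z,R'',S)$, I would first decide which of the rules (R1)--(R3) of \Cref{d:row-insertion} fired and then recover $(x,R)$. Rule (R3) leaves $R''=R$ and outputs $z$ lying strictly above the dip with $z\le r_{m+1}$, where $r_m$ is the unique entry of the increasing part with $r_m<z\le r_{m+1}$; by \Cref{lem:insertion_size} and \Cref{cor:no_dip} this pins down $m$, and then $x=r_m$. Rule (R1) is detected when $z$ was a genuine entry of $R$ replaced by the strictly smaller value $x$, seated at a position $i$ which the strict decomposition conditions (\Cref{lem:config}) identify unambiguously, after which $R$ is obtained by restoring $z$ at position $i$. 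The composite (R2)+(L1) is recognized by the adjacent equal pair that appears transiently at the dip and is inverted exactly as in the $a=0$, $r_i=0$, $r_{i+1}=1$ case of the proof of \Cref{p:reading-word}. Throughout, \Cref{lem:insertion_size} (the output of right insertion strictly exceeds the input) and \Cref{l:bumpingpath} constrain the admissible positions so that at most one reconstruction is consistent with the data.

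The inverse of left insertion is handled in parallel. Given $(b,R',S)$, one distinguishes (L1) --- where $R'$ arises from $R''$ by raising a single decreasing-part entry to $a'$, so that $a'$ and its position $j$ are determined by the unique place in $R'$ at which $b$ must be reinserted to restore $R''$ --- from (L2), where $R'=R''$ and $b$ is produced by one of the four formulas in subcases (I) and (II). Whether one is in subcase (I) or (II) is fixed by the position of the dip $p$ of $S$ relative to $j$, which is visible once $j$ is known, so in case (L2) the crux is to recover $j$, hence $a'=(R'')_j$, from $b$, $R''$, and $S$. Here the witness analysis underlying \Cref{lem:config} is used to show that no two distinct indices $j$ yield the same output --- in particular that the value returned by the $\max$ over $A_j$ in (L2)(I) determines its originating index --- while \Cref{lem:insertion_size} (left insertion strictly decreases values) and \Cref{l:bumpingpath} force the remaining choices.

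The main obstacle is exactly this exhaustive case verification: confirming that no two rules, and no two input pairs within a rule, can collide. The genuinely delicate points are locating the active position and value in the non-bumping cases (R3) and (L2), where $R''=R$ leaves no altered cell to key on, and proving injectivity of the $A_j$-maximum appearing in (L2)(I). In each instance the structural results already established --- monotonicity of right and left insertion, weakly decreasing bumping paths, the configuration characterization of \Cref{lem:config}, and \Cref{cor:no_dip} --- are precisely what render the reconstruction forced; the write-up will be a long case analysis mirroring \Cref{d:row-insertion} rule by rule.
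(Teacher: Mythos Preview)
Your approach---constructing explicit left- and right-inverses and reading injectivity off of them---is sound, but it is not how the paper argues this proposition. The paper proves injectivity by a direct pairwise comparison: assume right-inserting $x$ into $M$ and right-inserting $y$ into $N$ produce the same $(z,T)$, split into cases according to which of (R1), (R2), (R3) each of $x$ and $y$ triggers, and in each off-diagonal case derive a contradiction (typically by exhibiting a forbidden configuration in $M$ or $N$ via \Cref{lem:config}, or by violating \Cref{lem:insertion_size}); the diagonal cases force $x=y$ and $M=N$ directly. The left-insertion half is handled the same way with (L1) and (L2). Your strategy is, in effect, the content of the paper's \emph{later} \Cref{d:inverse-row-insertion} and \Cref{t:invertible}, reorganized to yield injectivity first; both routes ultimately rest on the same witness-and-configuration bookkeeping, so neither is materially shorter.

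One wrinkle worth flagging: you dismiss the (R2) case on the grounds that $T$ must be a genuine SDT, and plan to invert only the composite (R2)$+$(L1). The paper, however, treats the intermediate $T$ after (R2) as a pseudo-SDT and handles that case explicitly---this is precisely how the mixed case (R2)-versus-(R3) is dispatched: under (R3) one would have $N=T$, but $T$ carries a repeated entry while $N$ is assumed to be a genuine SDT. If you take the hypothesis ``$T$ is an SDT'' literally you prove the statement as written but miss the pseudo case the paper actually uses; if you broaden to pseudo-SDTs, the (R2) inverse is trivial (the repeated pair in $T$ pins down both $x$ and the altered position), so this is an easy repair. Your sketches of the (R3) and (L2) inverses are also vaguer than the actual rules in \Cref{d:inverse-row-insertion} turn out to require---distinguishing (R1) from (R3), and locating $j$ and deciding (L1) versus (L2), each involve several sub-conditions on the row $S$ below---but the overall plan is correct.
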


\begin{proof}
    Let $M$, $N$, $T$ be SDTs and $x, y, z\in \mathbb{N}$.
    Suppose right inserting $x$ into $M$ at row $k$ and right inserting $y$ into $N$ at row $k$ both output left inserting $z$ into $T$ at row $k$.\\
    \textbf{Case 1}: Suppose inserting $x$ into $M$ triggers rule (R1).
    Let $m_i = z$ be the element $x$ bumped.\\
    \textbf{Case 1.1}: Suppose inserting $y$ into $N$ would trigger rule (R1).
    Then in order for the resulting tableau to be $T$ and output to be $z$ we require that position $i$ is where bumping occurs, $n_i = z$, and $y = x$.
    Thus, $M = N$ and $y = x$.\\
    \textbf{Case 1.2}: Suppose inserting $y$ into $N$ would trigger rule (R2).
    Then $T$ would have two $y$s in row $k$, however as inserting $x$ into $N$ triggered (R1) we know row $k$ in $T$ is strictly unimodal.
    Therefore, inserting $y$ into $N$ cannot trigger (R2).\\
    \textbf{Case 1.3}: Suppose inserting $y$ into $N$ would trigger rule (R3), note this means that $N = T$.
    By definition of (R1) we have that $m_{i+1} > z$.
    Therefore, we require that $y = x$ as if $y < x$ then we could not output $z > x$ and if $y > x$ we would also require $y > z$.
    Further as $m_{i+1} > z$ we cannot output $m_{i+1}$, meaning that the output $z$ is somewhere right of $s_i$ (the element below $m_i$) in row $k+1$ or left of $m_i$ in row $k$ of $N$.
    However, we must have that this $z$ is also in $M$, but then $z$ would witness $s_i < z = m_i$ in $M$ meaning $M$ was not a SDT.
    Therefore, inserting $y$ into $N$ cannot trigger (R3).\\
    \textbf{Case 2}: Suppose inserting $x$ into $M$ triggers rule (R2).
    Let $m_i = x$ and $m_{i+1} = z$ where $i$ is the location bumping occurred at.\\
    \textbf{Case 2.2}: Suppose inserting $y$ into $N$ would trigger rule (R2).
    In order to end up with output tableau $T$ we require that $x=y$ and in order to output $z$ we require that the element bumped by $x$ is $z$.
    Therefore, $N=M$ and $y=x$.\\
    \textbf{Case 2.3}: Suppose inserting $y$ into $N$ would trigger rule (R3), note this means that $N = T$.
    However, $T$ is a pseudo-SDT and $N$ is not.
    Therefore, this cannot happen.

    \noindent \textbf{Case 3}: Suppose inserting $x$ into $M$ triggers rule (R3).\\
    \textbf{Case 3.3}: Suppose inserting $y$ into $N$ would trigger rule (R3), note this means that $N = T = M$.
    Suppose $x < y$.
    By Lemma~\ref{lem:insertion_size} we have that $z > y$ from inserting into $N$, however by definition of (R3) for inserting into $M$ we have that $z\leq y$ as the output is at most the value directly right of the copy of the input.
    Therefore, we cannot have $x \neq y$.\\

    Let $M$, $N$, $T$ be SDTs and $x, y, z\in \mathbb{N}$.
    Suppose left inserting $x$ into $M$ at row $k$ and left inserting $y$ into $N$ at row $k$ both output left inserting $z$ into $T$ at row $k$.\\
    \textbf{Case 1}: Suppose inserting $x$ into $M$ triggers rule (L1).
    Let $m_j = z$ be the element $x$ bumped.\\
    \textbf{Case 1.1}: Suppose inserting $y$ into $N$ would trigger rule (L1).
    In order for the resulting tableau to be $T$ and output $z$ we require that position $j$ is where bumping occurs, $n_j = z$, and $y=x$.
    Thus, $M = N$ and $y = x$.\\
    \textbf{Case 1.2}: Suppose inserting $y$ into $N$ would trigger rule (L2), note this means that $N=T$.
    By definition of (L1) we have that $x > z$.
    Suppose that $y < x$, then in order for $N=T$ we require that in $M$ y is right on $m_j = z$ meaning $z > y$.
    However this contradicts Lemma~\ref{lem:insertion_size}.
    Suppose $y > x$.
    Note that as $N=T$ we have that $x$ is in $N$ and by Corollary~\ref{cor:no_dip} $x$ cannot be the dip.
    Thus, by definition of (L2) the output must be greater or equal to $x$ meaning we cannot be in this case.
    Suppose $y = x$.
    If $j < p$, the dip of the row $k+1$ of $N$, then we must have $z < s_j$.
    However, then the $z$ we output from inserting $x$ into $N$ serve as a witness in $M$ meaning $M$ is not a SDT.
    Therefore, we have that $j \geq p$.
    This then requires that $s_{j+1} = z$ and $r_{j+2}, s_{j+2} > x$ (as we know $r_{j+1} > z$ due to $z$ being the element bumped when $x$ is inserted into $M$).
    However, this then prevents another $x$ from existing in any location that would allow it to end being left inserted into $N$ meaning this cannot happen.
    Therefore, inserting $y$ into $N$ cannot trigger rule (L2).\\
    \textbf{Case 2}: Suppose inserting $x$ into $M$ triggers rule (L2).\\
    \textbf{Case 2.2}: Suppose inserting $y$ into $N$ would trigger rule (L2), note this means that $M=N=T$ so suppose $x\neq y$.
    Without loss of generality suppose $x < y$.
    By Corollary~\ref{cor:no_dip} $x$ cannot be the dip, therefore when inserting $y$ we cannot output an element less than $x$.
    However, $z < x$ meaning this case cannot happen.
\end{proof}

To prove that Kra\'skiewicz--Hecke insertion is a bijection, we demonstrate an explicit inverse map.

\begin{definition}
    \label{d:inverse-row-insertion}
    We define \emph{inverse Kra\'skiewicz--Hecke row insertion} in two steps, each using an input $a \in \mathbb{N} \cup \{\infty\}$ and a strict decomposition tableau with distinguished row $R$ above row $S$.
    Write $R = r_1 \dots r_\ell$ and $S = s_2 \dots s_k$ with $k \leq \ell$, $r_q$ the dip of $R$ and $s_p$ the dip of $S$.
    Note $k$ could be 1, in which case $S$ is empty.
    Set $r_0, s_1 = -\infty$, $r_{\ell+1} = \infty$ and $s_m = \infty$ for $m > k$.
    Note $r_i$ appears immediately above $s_i$ in $RS$.
    For any $d\in \N$, define $A_d = \{s_{d+1},\dots,s_k,r_1,\dots,r_{d-1}\}$.
    There are two forms of inverse insertion, \emph{left inverse insertion} and \emph{right inverse insertion}.
    First we define left inverse insertion, for which we require the terminating tableau from right inserting $a$ into row $S$ to be a SDT.

    Let $1\leq j < q$ be maximal such that $r_j > a$.

    \begin{enumerate}
        \item[(I)] If $j < p$:
        \begin{enumerate}
            \item If $j+1 < q$ and $a = \max \{r_{j+1}\}\cup ( A_j \cap (r_{j+1}, s_j))$,  then go to (L2)
            \item If $j+1 \geq q$ and $a = \max \{r_{j+1}\}\cup ( A_j \cap (-\infty, s_j))$,  then go to (L2)
        \end{enumerate}
        \noindent Otherwise, go to (L1)
        
        \item[(II)] If $j \geq p$:
        \begin{enumerate}
            \item If the following are both true, go to (L2)
        \begin{enumerate}
            \item $j+1 \geq q$,
            \item $a = s_{j+1} < r_{j+1}$ or $r_{j+2}, s_{j+2} > r_j > s_{j+1} = a$.
        \end{enumerate}
        \item Otherwise, if $a = r_{j+1}$, and either of the following are true, go to (L2)
        \begin{enumerate}
            \item $r_{j+2} \leq r_j$, or
            \item $s_{j+1} < r_{j}$ and there exists $k\geq j+2$ such that $s_k\in (s_{j+1}, r_{j}]$,
        \end{enumerate}
        \end{enumerate}
    \noindent Otherwise go to (L1).
    \item[(L2)] \label{rule.l2} Set $a' = r_{j}$ and $R'' = R$.
    \item [(L1)] Set $a' = r_{j}$ and replace $r_{j}$ with $a$ to get $R''$.
    \end{enumerate}

    We now define inverse right insertion, for which we require the terminating tableau from left inserting $a'$ into $R''$ to be a SDT. Let $q \leq i \leq \ell$ be maximal such that $r_{i} \leq a'$:
    \begin{enumerate}
        \item[(R3)] (I) If $s_i < a'$ and
        \begin{equation*}
            a' = \min ( (s_i, r_{i+1}) \cap A_i ),
        \end{equation*}
        then set $R' = R''$ and $b = r_i$.\\
        (II) If $a' = r_{i}$, $r_{i} \geq r_{i-2}$, and
        \begin{equation*}
            a' = \min ( \{r_{i}\} \cup [ (s_{i-1}, r_{i}) \cap A_{i-1} ]),
        \end{equation*}
        then set $R' = R''$ and $b= r_{i-1}$.
        \item[(R1/R2)] Else, set $b=r_i$ and create $R'$ by setting $r_{i}=a'$.
    \end{enumerate}

\end{definition}

\begin{example}
    We present examples of the various steps of the inverse insertion.
    \begin{enumerate}
        \item Left inverse inserting $1$ into the top row of the tableau below is undefined as if you right insert $1$ into the row below the tableau is no longer an SDT.
        \[
        \begin{ytableau}\none[1\leftarrow \;\;\;]&4&3&1&0&1&3&\none\\\none&\none&3&2\\\end{ytableau}
    \begin{ytableau}\none&4&3&1&0&1&3\\\none&\none&3&2&\none[\;\;\;\leftarrow 1]\\\end{ytableau}
    \begin{ytableau}\none&4&3&*(red)1&0&1&3\\\none&\none&3&*(red)2&*(red)1\\\end{ytableau}
        \]\vspace{.25cm}

    \item Here $r_j = 2$. None of the conditions for (L2) are met, so we set $a'=r_j=2$ and swap $a$ and $r_j$ to get $R'$.
    \[
    \begin{ytableau}\none[0 \leftarrow\;\;\;]&3&2&1&\none\\\end{ytableau}
    \begin{ytableau}\none&3&0&1&\none[\;\;\;\rightarrow 2]\\\end{ytableau}
    \]\vspace{,25cm}

    \item Here $r_j = 3$. As $j = 2 < p = 3$, $j+1 = 3 < q = 4$, and $\max \{1\} \cup [ (1,3)\cap\{2,4\}] = 2 = a$, we set $a' = r_j$.
    \[
    \begin{ytableau}\none[2\leftarrow \;\;\;]&*(green)4&3&1&0&1&3&\none\\\none&\none&3&*(green)2\\\end{ytableau}
    \begin{ytableau}\none&4&3&1&0&1&3&\none[\;\;\;\rightarrow 3]\\\none&\none&3&2\\\end{ytableau}
    \]\vspace{.25cm}

    \item Here $r_j = 1$. As $j = 2 < p = 3$ and $j+1 = 3 \geq q = 3$, and $\max \{0\} \cup [ (-\infty, 3) \cap \{0, 4\} ] = 0 = a$, we set $a' = r_j$.
    \[
    \begin{ytableau}\none[0\leftarrow\;\;\;]&*(green)4&1&0&1&\none\\\none&\none&3&*(green)0\\\end{ytableau}
    \begin{ytableau}\none&4&1&0&1&\none[\;\;\;\rightarrow 1]\\\none&\none&3&0&\none\\\end{ytableau}
    \]\vspace{.25em}

    \item Here $r_j = 4$. We are in case (II) (a) as $j + 1 = 3 \geq q = 3$, $j = 2 \geq p = 2$, and $r_{j+1} = 2 > s_{j+1} = 1 = a$. Thus, we set $a' = r_{j} = 4$.
    \[
    \begin{ytableau}\none[1 \leftarrow\;\;\;]&5&4&2&3&\none\\\none&\none&0&1&\none\\\end{ytableau}
    \begin{ytableau}\none&5&4&2&3&\none[\;\;\;\rightarrow 4]\\\none&\none&0&1\\\end{ytableau}
    \]\vspace{.25cm}

    \item Here $r_j = 2$. We are in case (II) (a) as $j + 1 = 3 \geq q = 3$, $j = 2 \geq p = 2$, and $r_{j+2}, s_{j+2} = 3,3 > r_j = 2 > s_{j+1} = 1 = a$. Thus, we set $a' = r_{j} = 2$.
    \[
    \begin{ytableau}\none[1 \leftarrow\;\;\;]&4&2&1&3&\none\\\none&\none&0&1&3\\\end{ytableau}
    \begin{ytableau}\none&4&2&1&3&\none[\;\;\;\rightarrow 2]\\\none&\none&0&1&3&\none\\\end{ytableau}
    \]\vspace{.25cm}

    \item Here $r_j = 2$. As $j = 2 \geq p = 2$ we are in case (II).
    As $a \neq s_{j+1}$ and $a = r_{j+1} = 1$, we consider case (b).
    As $0 = r_{j+2} \leq r_j = 2$ we set $a' = r_j$.
    \[
    \begin{ytableau}\none[1 \leftarrow \;\;\;]&4&2&1&0&\none\\\none&\none&2&3\\\end{ytableau}
    \begin{ytableau}\none&4&2&1&0&\none[\;\;\;\rightarrow 2]\\\none&\none&2&3&\none\\\end{ytableau}
    \]\vspace{.25cm}

    \item Here $r_j=3$. As $j = 1 \geq p = 2$ we are in case (II).
    As $a = 0 \not< r_{j+1} = 0$ and $r_{j+2} = 2 \not> 3 = r_j$ (ii) is false meaning we aren't in case (a).
    As $a = r_{j+1} = 0$, $s_{j+1} = 1 < 3 = r_{j}$, and $s_{3} \in (0, 3]$ we are in case (b) meaning we set $a' = 3 = r_{j}$.
    \[
    \begin{ytableau}\none[0 \leftarrow \;\;\;]&3&0&2&3&\none\\\none&\none&0&1\\\end{ytableau}
    \begin{ytableau}\none&3&0&2&3&\none[\;\;\;\rightarrow 0]\\\none&\none&0&1&\none\\\end{ytableau}
    \]\vspace{.25cm}

    \item Here $r_j=2$. As $j = 2 \geq p = 2$ we are in case (II).
    As $a = 0 \neq 1 = s_{j+1}$ (ii) is false meaning we aren't in case (a).
    As $r_{j+2} = 3 \not\geq 2 = r_{j}$ and there is not $k\geq j+2$ such that $s_k \in (1,3]$, we are also not in case (b).
    Therefore (L1) applies setting $a' = 2$ and replacing $r_j$ with $0$ to get $R'$.
    \[
    \begin{ytableau}\none[0 \leftarrow \;\;\;]&3&2&0&3&\none\\\none&\none&0&1\\\end{ytableau}
    \begin{ytableau}\none&3&0&0&3&\none[\;\;\;\rightarrow 2]\\\none&\none&0&1&\none\\\end{ytableau}
    \]\vspace{.25cm}

    \item Here $r_i = 0$. As $\{3,0\}\cap (0,1) = \emptyset$ and $a' \neq 3$ we are not in case (I).
    As $a' \neq r_i = 0$, we are also not in case (II).
    Therefore, we apply (R1/R2) by setting $b = 0$ and replacing $r_i$ with $2$.
    \[
    \begin{ytableau}\none\\\none&3&0&0&3&\none[\;\;\;\rightarrow 2]\\\none&\none&0&1\\\end{ytableau}\hspace{1em}
    \begin{ytableau}\none[0 \leftarrow ]\\\none&3&0&2&3&\none\\\none&\none&0&1\\\end{ytableau}
    \]\vspace{.25cm}

    \item Here $r_i = 1$. As $s_i = 1 < a = 2 < r_{i+1} = 3$ and $a = \min(\{3\}\cup [(1,3)\cap \{4,2,0\}]) = 2$, due to (I) we set $b=r_i =1$ and leave $R'$ unchanged.
    \[
    \begin{ytableau}\none\\\none&*(green)4&*(green)2&*(green)0&1&3&\none[\;\;\;\rightarrow 2]\\\none&\none&2&0&1\\\end{ytableau}\hspace{1em}
    \begin{ytableau}\none[1 \leftarrow ]\\\none&4&2&0&1&3&\none\\\none&\none&2&0&1\\\end{ytableau}
    \]\vspace{.25cm}

    \item Here $r_i = 3$. As $s_i = \infty$ we are not in case (I).
    As $a' = 3 = r_i \geq 0 = r_{i-2}$ and $a' = \min (\{3\} \cup [(1,3)\cap \{0,1,4\}])$, due to (II) we set $b = r_{i-1} = 3$ and leave $R'$ unchanged.
    \[
    \begin{ytableau}\none\\\none&*(green)4&*(green)1&*(green)0&*(green)1&3&\none[\;\;\;\rightarrow 3]\\\none&\none&1&0&1\\\end{ytableau}\hspace{1em}
    \begin{ytableau}\none[1 \leftarrow ]\\\none&4&1&0&1&3&\none\\\none&\none&1&0&1\\\end{ytableau}
    \]\vspace{.25cm}

    \end{enumerate}
    
\end{example}

We now show these inverse maps are well-defined.

\begin{theorem}
\label{t:invertible}
    Left and right inverse Kra\'skiewicz--Hecke row insertions are well-defined and are inverse to left and right Kra\'skiewicz--Hecke row insertions, respectively.
\end{theorem}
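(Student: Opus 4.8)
The plan is to reduce the statement to two independent claims --- that left inverse insertion is a two-sided inverse of forward left insertion, and that right inverse insertion is a two-sided inverse of forward right insertion --- and to prove each by matching forward rules with inverse rules case by case. This suffices: forward row insertion is forward right insertion followed by forward left insertion, so its inverse must be left inverse insertion followed by right inverse insertion, and the two-sided inverse properties for the pieces assemble into the two-sided inverse property for the composite on the relevant domains. (Forward right insertion may output a pseudo-SDT $R''$, but $R''$ only ever feeds into forward left insertion, whose output is again an SDT by Proposition~\ref{p:row-insertion}, so all compositions stay inside the range where everything is defined.) Throughout, the standing hypotheses in Definition~\ref{d:inverse-row-insertion} --- that the relevant terminating tableaux are SDTs --- hold automatically whenever we reverse a genuine insertion, by Proposition~\ref{p:row-insertion}.

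First I would carry out the forward-then-inverse direction. Fix an SDT $RS$ and $a \in \N$, run forward right insertion to get $(a',R'')$, then forward left insertion to get $(b,R')$, and feed $(b,R',S)$ to left inverse insertion. The first task is to show the index $j$ selected by left inverse insertion (maximal with $1\le j<q$ and $r_j>b$, read in $R'$; here Lemma~\ref{lem:insertion_size} and Corollary~\ref{cor:no_dip} guarantee $b<a'=r_j$ and $j<q$) coincides with the index $j$ used by forward left insertion, and then that the triggered branch is (L2)-type precisely when forward fired (L2) and (L1)-type precisely when forward fired (L1). The content is that the interval-membership conditions distinguishing inverse sub-cases (I)(a), (I)(b), (II)(a) and its two alternatives, and (II)(b) and its two alternatives, are exactly the conditions recorded by the corresponding forward sub-case; one then checks they are pairwise exclusive, that each $\min$/$\max$ is over a nonempty set (e.g.\ the forward output $b$ itself witnesses its defining interval), and that the output is $(a',R'')$. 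One then feeds $(a',R'')$ to right inverse insertion and checks analogously that (R3)(I) matches forward (R3) with output in $(s_i,r_{i+1})$, (R3)(II) matches forward (R3) with output $r_{i+1}$, and (R1/R2) matches forward (R1) and (R2); the (R2) case needs slightly more care since $R''$ is only a pseudo-SDT, but there $a'=r_{i+1}=r_i$ is the duplicated dip-side value, and right inverse insertion simply deletes the duplicate.

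Next I would do the inverse-then-forward direction together with the remaining well-definedness bookkeeping. Starting from $(b,R',S)$ satisfying the hypotheses of Definition~\ref{d:inverse-row-insertion}, run left inverse then right inverse to obtain $(a,R)$, run forward right then forward left, and verify we return to $(b,R')$; the arguments mirror the previous paragraph, with the SDT hypotheses now doing the work of excluding the degenerate patterns --- the forbidden configurations $(i)$--$(v)$ of Lemma~\ref{lem:config} --- that would otherwise make a forward rule fire a different case. For well-definedness I would also verify that the case conditions in the inverse rules are exhaustive and pairwise exclusive on inputs meeting the hypothesis, that every $\min$/$\max$ is over a nonempty set, that the output is a genuine (pseudo-)SDT, and that the boundary conventions $r_0=s_1=-\infty$, $r_{\ell+1}=\infty$, $s_m=\infty$ for $m>k$, together with the convention $p=0$ when $S$ is empty, are respected --- in particular the empty-$S$ case, where left inverse insertion always lands in branch (I) and right inverse insertion always applies (R1/R2).

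The main obstacle will be the left-insertion/left-inverse matching, and within it sub-case (II)(b), whose trigger ``there exists $k\ge j+2$ with $s_k\in(s_{j+1},r_j]$'' is a reverse-engineering of when forward rule (L2)(II) output $b=r_{j+1}$ rather than $b=s_{j+1}$. Showing this condition is equivalent to the negation of condition (b) of forward (L2)(II) --- while simultaneously ruling out confusion with (II)(a) and with (L1) --- is exactly where the intermediate-segment nature of the SDT column condition (Lemma~\ref{lem:config}, configurations $(iii)$, $(iv)$, $(v)$) must be invoked, much as in the proof of Lemma~\ref{l:bumpingpath}. A secondary, bookkeeping-heavy difficulty is tracking the pseudo-SDT produced by forward (R2) through the composition; I expect to dispatch it by the observation already used in Proposition~\ref{p:row-insertion}, namely that (R2) is always immediately followed by (L1), which restores strict unimodality.
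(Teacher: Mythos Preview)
Your proposal is correct in outline but takes a genuinely different route from the paper. You organise the argument around verifying the two-sided inverse property by matching each forward rule to its inverse rule case-by-case, treating well-definedness (that the output is again a (pseudo-)SDT) as subsidiary bookkeeping. The paper does almost the opposite: it treats the inverse-rule matching as essentially built into the definitions, and devotes the bulk of the proof to well-definedness. Specifically, the paper first checks that inverse insertion preserves unimodality of rows (handling the delicate case where (L1) fires with $a=r_{j+1}$, showing then $j+1=q$ and the subsequent step is necessarily (R1/R2)), and then runs a systematic case analysis over the five forbidden configurations of Lemma~\ref{lem:config}, the position of the newly-placed entry within the configuration, and whether it arose from left or right inverse insertion---about two dozen cases in all---showing in each that either the pre-insertion tableau already contained a forbidden configuration or forward-inserting the bumped element would create one.

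Your approach would work, but you are underestimating the well-definedness step. The claim ``the output is a genuine (pseudo-)SDT'' is not routine: it requires exactly the kind of intermediate-segment analysis you anticipate only for sub-case (II)(b), but in fact for essentially every case. The paper's configuration-by-configuration bookkeeping is the natural home for that work. Conversely, the paper is comparatively light on explicitly verifying that the case conditions in Definition~\ref{d:inverse-row-insertion} are exhaustive and exclusive and really do reverse-engineer the forward branches; your plan would make that part more transparent. In short: both approaches must ultimately do both jobs, but they allocate effort oppositely.
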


\begin{proof}
    Note that each inverse insertion step preserves unimodality of rows with the exception of (L1) when $a=r_{j+1}$.
    However, in that case the next inverse insertion step will restore unimodality of the row as it will always trigger (R1/R2).
    
    To see this we first aim to show that $i = j+1 = q$.
    Assume for the sake of obtaining a contradiction, that $r_{j+2} \leq r_{j}$. We discuss the following two cases.
    
    \noindent\textbf{(I): Suppose $j < p$}, where $p$ is the dip of $S$.
    Then as left inverse inserting $a$ triggers (L1) we must have that $a\neq \max\{r_{j+1}\}\cup A_j \cap (r_{j+1}, s_j))$ if $j+1 < q$ or $a\neq \max\{r_{j+1}\}\cup A_j \cap (-\infty, s_j))$ if $j+1 \geq q$.
    Therefore, there must exists some $x\in A_j$ such that $a < x < s_j$, take your $x$ to be the topmost, leftmost such element.
    
    Suppose $x\in R$, then as $x$ is left of $r_j$, $x$ would witness $r_j$ over $s_j$.
    
    Suppose $x\in S^{\downarrow}$, then if you would right insert $a$ into $S$, there would be an $a$ right of $x$.
    However, then that $a$ would witness $a$ over $x$.
    
    Suppose $x\in S^{\uparrow}$.
    Let $y$ be the element bumped by right inserting $a$ into $S$.
    Note that $y$ must be weakly left of $x$ as $a < x$, meaning $y \leq x < s_j$.
    Then as $s_{j+1} < a < y < s_j$ we have that left inserting $y$ into $S$ will bump $s_{j+1}$.
    However, then $a\in S$ will witness $a$ over $y$.
    Thus, we cannot have $j < p$.
    
    \noindent\textbf{(II): Suppose $j \geq p$}, then we left inverse inserting $a$ we find (II)(b)(i) is always true meaning we can never trigger (L1).

    Therefore, we must have that $i = j+1$ and is the dip $q$.

    Now suppose that the next inverse insertion step triggers (R3) instead of (R1/R2).
    Note that $a' = r_j > r_i = a$, so we must satisfy (I).
    Therefore, $s_i < a'$ and $a'=\min{[(s_i, r_{i+1})\cap A_i]}$.
    
    If $j < p$, then in order for inverse left insertion of $a$ to trigger (L1) we must have had that $a \neq \max \{r_{j+1}\}\cup ( A_j \cap (r_{j+1}, s_j))$.
    Note that $a \geq s_i$ as otherwise $\min{[(s_i, r_{i+1})\cap A_i]} \leq a$, a contradiction as $a' > a$.
    However, then $x$ witnesses $a$ over $s_i$.
    Thus, we must have that $j \geq p$.
    
    However, as left inverse inserting $a$ trigger (L1) we must have that (II)(b)(ii) is false meaning either $s_{j+1} \geq r_j$ or $s_{j+1}<r_j$ and there does not exist $k\geq j+2$ such that $s_k\in (s_{j+1}, r_j]$.
    
    If $s_{j+1} \geq r_j = a'$, then $\min{[(s_i, r_{i+1})\cap A_i]} = \emptyset$ as $s_i = s_{j+1} \geq r_j = r_{i+1}$.
    
    Otherwise, assume for each $k\geq j+2$ either $s_k < s_{j+1} = s_i$ or $s_k > r_j = a'$.
    Note that as every element in $R$ left of $r_j$ is greater than $a'$, we have that $A_j \cap (s_i, r_{i+1})$ can only contain elements greater than $a'$.
    Thus, $a' \neq \min{[(s_i, r_{i+1})\cap A_i]}$.
    This concludes the proof that inverse insertion preserves unimodality.\\
    
    It remains to show that after each inverse insertion step the tableau remains a (pseduo) strict decomposition tableau.
    By Lemma~\ref{l:SDT unimodal}, this is equivalent to saying if inverse insertion results in a tableau with one of the configurations either our original tableau had one of the configurations or we could forward insert ou element to get a tableau with one of the configurations.
    We will denote each case as a triple: the configuration found in $T'$, the element in the configuration that was inserted, and $\textbf{R}$ or $\textbf{L}$ depending on whether the insertion was right insertion or left insertion.
    For each case let $t$ be the element that was bumped out by the whatever created the bad configuration.
    Further, let $R = r_1 \dots r_\ell$ be the top of row of the configuration and $S = s_2 \dots s_k$ the bottom row.
    Note if $t$ was bumped by $u$ via right inverse insertion then $t < u$ and if via left insertion inverse $t > u$.\\

    \noindent\textbf{Case (i, a, L)}: Note that by definition of (L1) we either have that $a \geq r_{j+1}$ or $r_{j+1}$ is the dip where $t=r_j$.
    If we then right insert $a$ into $S$ by Lemma~\ref{lem:insertion_size} we have that $u>a$ where $u$ is the element $a$ bumps from right insertion.
    Therefore, after left inserting $v$ into $S$ we have $w > a$ where $w$ is the element below $r_{j+1}$, note $w\in S^{\downarrow}$.
    As either $a \geq r_{j+1}$ or $r_{j+1}$ is the dip we always have that $r_{j+1}$ over $w$ witnesses $a$.\\

    \noindent\textbf{Case (i, a, R)}: This requires $a\in R^{\downarrow}$ after inverse right insertion which cannot happen.\\

    \noindent\textbf{Case (i, b, L)}: As $t > b \geq a$ we have that $a$ and $t$ form configuration $(i)$.\\

    \noindent\textbf{Case (i, b, R)}: Left insert $b$ into $S$ will result in $b\in S$ meaning $a$ and $b$ form configuration $(i)$.\\

    \noindent\textbf{Case (ii, a, L)}: Note that by definition of (L1) we either have that $a \geq r_{j+1}$ or $r_{j+1}$ is the dip where $t=r_j$.
    If we then right insert $a$ into $S$ by Lemma~\ref{lem:insertion_size} we have that $u>a$ where $u$ is the element $a$ bumps from right insertion.
    Therefore, after left inserting $v$ into $S$ we have $w > a$ where $w$ is the element below $r_{j+1}$, note as $c\in S^{\downarrow}$ and is not the dip, we have that $w\in S^{\downarrow}$.
    As either $a \geq r_{j+1}$ or $r_{j+1}$ is the dip we always have that $r_{j+1}$ over $w$ witnesses $a$.\\

    \noindent\textbf{Case (ii, a, R)}: As $t < a$ we have that $t$, $b$ and $c$ form configuration $(ii)$.\\

    \noindent\textbf{Case (ii, b, L)}: Note that we must have $t\in S^{\downarrow}$ in order for $b$ to bump it when left inverse inserting.
    Therefore, we must have that $t < c$.
    However, as $t > b \leq a$ we then have that $a$, $t$, and $c$ form configuration $(ii)$.\\

    \noindent\textbf{Case (ii, b, R)}: Left inserting $b$ into $S$ will result in $b$ ending up right of $c$ as $c > b$ and $c\in S^{\downarrow}$.
    Therefore, after left insertion of $b$ into $S$ the tableau has configuration $(ii)$ made from $a$, $b$, and $c$.\\

    \noindent\textbf{Case (ii, c, L)}: As $t > c > b \geq a$ we have that $a$, $b$, and $t$ form configuration $(ii)$.\\

    \noindent\textbf{Case (ii, c, R)}: As $t\in S^{\uparrow}$ and is left of $b$ we have that $t < b$.
    However, as $c > b$ with $b$ right of $t$, $c$ cannot bump $t$ from inverse right insertion.
    Therefore, this case cannot happen.\\

    \noindent\textbf{Case (iii, v, L)}: Note that as $v < z$ we either have $v\in R^{\uparrow}$ or $v$ is the dip of $R$.
    However, by definition of (L1) we have that $v\in R^{\downarrow}$ and is not the dip of $R$.\\

    \noindent\textbf{Case (iii, v, R)}: As $t<v$ we have that $t$, $x$, $y$, and $z$ form configuration $(iii)$.\\

    \noindent\textbf{Case (iii, x, L)}: As  $t\in S^{\downarrow}$ and $z\in R^{\uparrow}$ with $z \geq y$ we have that $y$ witnesses $z$ over $t$.\\

    \noindent\textbf{Case (iii, v, R)}: As $t<x$ we have that $v$, $t$, $y$, and $z$ form configuration $(iii)$.\\

    \noindent\textbf{Case (iii, y, L)}: Note that as $x < y$ we have $y\in R^{\uparrow}$ meaning this case cannot happen.\\

    \noindent\textbf{Case (iii, y, R)}: If $x < t$, then as $t < y$ we have that $v$, $x$, $t$, and $z$ form configuration $(iii)$.
    Thus, suppose $x \geq t$ which requires that $x\in S^{\downarrow}$.
    However, as $z\in R^{\uparrow}$ we will have that $t$ witnesses $z$ over $x$.\\

    \noindent\textbf{Case (iii, z, L)}: Note that as $v < z$ we have $z\in R^{\uparrow}$ meaning this case cannot happen.\\

    \noindent\textbf{Case (iii, z, R)}: If $t \geq y$, then $v$, $x$, $y$, and $t$ form configuration $(iii)$, so suppose otherwise.
    Let $u$ be the element above $y$, as inverse right inserting $z$ bumps $t$ we have that $u > z$.
    However, after left inserting $z$ into $R$ we will then have that the tableau contains configuration $(iv/v)$ with $y$, $z$, and $u$.\\

    \noindent\textbf{Case (iv/v, x, L)}: If $t<y$, then $t$, $y$, and $z$ form configuration $(iv/v)$.
    Therefore, suppose $t\geq y$.
    However, then as $z\in R^{\uparrow}$ we have that $y$ witnesses $z$ over $t$.\\

    \noindent\textbf{Case (iv/v, x, R)}: As $t<x$ we have that $t$, $y$, and $z$ form configuration $(iv/v)$.\\

    \noindent\textbf{Case (iv/v, y, L)}: If $t\leq z$, then $x$, $t$, and $z$ form configuration $(iv/v)$.
    Therefore, suppose $t>z$.
    If we then right insert $y$ into $S$ we would have that $y$ ends up somewhere right of $x$ as $x < y$.
    Let $w$ be the element above where $y$ ends up.
    Note that as $z$ is above $x$ we have that $w$ is right of $z$ and therefore $z < w$.
    However, we then have that $z$ witnesses $w$ over $y$.\\

    \noindent\textbf{Case (iv/v, y, R)}: After left inserting $y$ into $R$ we have that $x$, $y$, and $z$ form configuration $(iv/v)$.\\

    \noindent\textbf{Case (iv/v, z, L)}: Note that as $y < z$ we have $z\in R^{\uparrow}$ meaning this case cannot happen.\\

    \noindent\textbf{Case (iv/v, z, R)}: If $y\in R^{\uparrow}$, then we require $t > y$ meaning $x$, $y$, and $t$ form configuration $(iv/v)$.
    Therefore, suppose $y\in R^{\downarrow}$.
    Let $u$ be the element $z$ bumps after being left inserted into $R$, note that $z > u \geq y > x$.
    As $u > x$ we have that after $u$ is right inserted into $S$, there is a $u$ right of $x$.
    Let $w$ be the element above that $u$, note that as $w$ is right of $t$ and when inverse right inserted $z$ bumps $t$ we have that $w > z$.
    Thus, $u$, $z$ and $w$ form configuration $(iv/v)$.
\end{proof}

\begin{definition}
    For a pair of a strict decomposition tableau and a standard shifted set valued Young tableau, $(T, S)$, 
    perform \emph{Kra\'{s}kiewicz--Hecke inverse insertion} by first finding the largest number, $n$, in $S$.
    If $n$ is alone in a box remove that box from $S$ and $T$ and left inverse insert the element that was in that box in $T$ to the row above.
    If $n$ is not alone in a box remove, $n$ from that box in $S$ and left inverse insert the element in that box in $T$ to the row above.
    If you would try to left inverse insert above the first row instead record the element you wished to left inverse insert as $a_n$.
    Repeat this process until your tableau are both empty, then $\mathbf{a} = (a_1, \ldots, a_n)$ is the $\emph{Kra\'{s}kiewicz--Hecke inverse}$ of $(T, S)$.
\end{definition}

Combining \Cref{t:invertible} with \Cref{p:row-insertion}, we have proved:

\begin{corollary}
\label{c:bijection}
	For all $n \in \mathbb{N}$, the map $\mathbb{N}^n \xrightarrow{\mathrm{KH}} \bigsqcup_{\lambda \vdash k \leq  n} \SDT(\lambda) \times \ShSet_n(\lambda)$ is a bijection.
\end{corollary}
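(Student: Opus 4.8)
The plan is to assemble the forward and backward algorithms already constructed in this section into mutually inverse maps. \textbf{Forward map.} Given $\bfa = (a_1,\dots,a_n) \in \N^n$, Definition~\ref{d:full-insertion} produces the pair $(P_{HK}(\bfa),Q_{HK}(\bfa))$ by iterating row insertion, and the first task is to check this is well-defined with values in $\SDT(\lambda)\times\ShSet_n(\lambda)$ for some strict $\lambda\vdash k\le n$. That $P_{HK}(\bfa)$ is always a strict decomposition tableau of a genuine shifted shape is exactly Proposition~\ref{p:row-insertion}, whose proof also invokes Lemma~\ref{l:bumpingpath} to rule out a non-shifted shape. It then remains to see $Q_{HK}(\bfa)$ is a standard shifted set-valued tableau whose entries partition $[n]$: the labels inserted are $1,2,\dots,n$ in increasing order, so each appears exactly once and at the $p$th step $p$ is the current maximum; when a new cell $(i,j)$ appears we set $Q_{ij}=\{p\}$, and when the shape does not grow we add $p$ to the cell $(\ell,\lambda_\ell{+}\ell{-}1)$ with $\ell=\max\{\ell:\lambda_\ell+\ell-k=\lambda_k\}$. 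In both cases that cell is an outer corner of the current shape, so the conditions $\max Q_{ij}\le\min Q_{i+1\,j},\min Q_{i\,j+1}$ are vacuous there and $p$ being maximal causes no other violation.

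\textbf{Backward map.} Given $(P,Q)\in\SDT(\lambda)\times\ShSet_n(\lambda)$, I would run the pair-level inverse insertion procedure defined above: locate the unique occurrence of the maximal label $n$, say in cell $(i,j)$; since $n$ is maximal, any cell weakly below or to the right of $(i,j)$ would be forced to contain a value $>n$, so $(i,j)$ is an outer corner of $\lambda$ — if $n$ is alone we delete the box from both $P$ and $Q$, otherwise we delete only $n$ from $Q_{ij}$ and keep the box; either way we left-inverse-insert the row-$i$ entry of $P$ into row $i-1$ and then alternately apply right and left inverse row insertion up the tableau until a value is ejected from the first row, recorded as $a_n$. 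By Theorem~\ref{t:invertible} each left and right inverse row insertion appearing here is well-defined and outputs a strict decomposition tableau, so the procedure is well-defined and returns a word in $\N^n$.

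\textbf{Mutual inverse.} I would show $\mathrm{KH}^{-1}\circ\mathrm{KH}=\mathrm{id}$ and $\mathrm{KH}\circ\mathrm{KH}^{-1}=\mathrm{id}$ by induction on $n$, the case $n=0$ being trivial. For the step, one checks that stripping the label $n$ from $Q_{HK}(\bfa)$ identifies exactly the cell and row in which the row insertion of $a_n$ into $P_{HK}(a_1,\dots,a_{n-1})$ terminated: when the shape grew this is immediate from Definition~\ref{d:full-insertion}, and when it did not, the diagonal corner recorded there is precisely the corner that the cascade of inverse row insertions, begun in the recorded row, climbs through. Using Lemma~\ref{lem:insertion_size} and Theorem~\ref{t:invertible}, each inverse row insertion step is the inverse of the corresponding forward row insertion step, so the cascade reverses the forward cascade and returns $\big(P_{HK}(a_1,\dots,a_{n-1}),Q_{HK}(a_1,\dots,a_{n-1})\big)$ together with $a_n$; the inductive hypothesis finishes this direction. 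The other composition is symmetric: starting from $(P,Q)$, strip $n$ to get $a_n$ and a smaller pair, re-insert $a_n$, and apply Theorem~\ref{t:invertible} in the forward direction along with the recording rule to see $n$ is restored to the cell it came from. Since both composites are the identity, $\mathrm{KH}$ is a bijection.

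\textbf{Main obstacle.} With Proposition~\ref{p:row-insertion} and Theorem~\ref{t:invertible} in hand, no further insertion-theoretic lemma is required, and the only genuine work left is the bookkeeping of the recording tableau: verifying that the diagonal corner selected when the shape does not grow is simultaneously a legal place to record $n$ and precisely the corner re-found by inverse insertion, so that forward insertion and inverse deletion operate on matching rows. This matching is the set-valued analogue of the corresponding step for ordinary Kra\'skiewicz insertion, and it is where I expect the argument to demand the most care.
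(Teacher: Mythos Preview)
Your approach is exactly the paper's: the authors' entire proof is the single sentence ``Combining Theorem~\ref{t:invertible} with Proposition~\ref{p:row-insertion}, we have proved,'' and your forward map, backward map, and mutual-inverse induction are precisely the unpacking of that sentence. You have in fact written out considerably more detail than the paper supplies.

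The point you flag as the main obstacle---the recording-tableau bookkeeping when the shape does not grow---is real and is the one place where neither your argument nor the paper's one-line proof is fully explicit. In Definition~\ref{d:full-insertion}, when insertion terminates in row $k$ without adding a cell, the label $p$ is placed in row $\ell=\max\{\ell:\lambda_\ell+\ell=\lambda_k+k\}$, which can be strictly below row $k$ (the rightmost cells of rows $k,k{+}1,\dots,\ell$ then share a column). Inverse insertion, however, begins at row $\ell$, not row $k$, so one must check that the inverse cascade from row $\ell$ up to row $k$ is effectively trivial (or more precisely, that it correctly reproduces the state at the termination of the forward cascade). You are right that this is the step demanding the most care; it is not spelled out in the paper either.
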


\section{Conjectures}
\label{s:conjectures}

Our original aim in this project was to compute the $GQ_\lambda$--expansion of $G^C_w$.
As we explain in Section~\ref{ss:no-insertion}, insertion methods exhibit a fundamental inadequacy for the task.
Despite this shortcoming, empirically Kra\'skiewicz--Hecke insertion correctly computes this expansion as asserted in Conjecture~\ref{conj:expansion}, which we restate.
Recall $a_w^C(\lambda)$ is the number of strict decomposition tableaux of shape $\lambda$ whose reading word is a $0$--Hecke expression for $w$.

\begin{conjecture*}[Conjecture~\ref{conj:expansion}]
    For $w$ a signed permutation,
    \[
    G^C_w =\sum_{\lambda\ \mathrm{strict}} \beta^{|\lambda| -\ell(w)} a_w^C(\lambda) \cdot GQ^{(\beta)}_\lambda.
    \]
\end{conjecture*}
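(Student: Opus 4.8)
By Proposition~\ref{p:unimodal_GC} we have $G^C_w(\bfx)=\sum_{p\ge 0}\sum_{(\bfa,\bfi)\in\UU_p(w)}\beta^{p-\ell(w)}x^\bfi$, and by~\eqref{eq:GQ}, $GQ^{(\beta)}_\lambda=\sum_{T\in\ShSet^*(\lambda)}\beta^{|T|-|\lambda|}x^T$. Since $G^C_w$ is $\beta$-homogeneous in the sense that every $\beta^k$ accompanies the degree-$(\ell(w)+k)$ part (this is visible from Proposition~\ref{p:unimodal_GC}), and $GQ^{(\beta)}_\lambda$ is likewise $\beta$-homogeneous with $x$-degree minus $\beta$-degree equal to $|\lambda|$, the a priori expansion $G^C_w=\sum_\lambda c_\lambda(\beta)\,GQ^{(\beta)}_\lambda$ guaranteed by Proposition~\ref{p:GC-GQ} automatically has each $c_\lambda(\beta)$ a single monomial $\beta^{|\lambda|-\ell(w)}c_w(\lambda)$ with $c_w(\lambda)\in\Z$ and supported on $|\lambda|\ge\ell(w)$. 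So the content of the conjecture is exactly $c_w(\lambda)=a^C_w(\lambda)\ge 0$, and the natural way to prove this is to upgrade the word-level bijection of Corollary~\ref{c:bijection} to a weight-preserving bijection at the level of \emph{biwords},
\[
\Phi:\ \UU(w)\ \xrightarrow{\ \sim\ }\ \bigsqcup_{\lambda\ \mathrm{strict}}\bigl\{P\in\SDT(\lambda):\rho(P)\in\HH(w)\bigr\}\times\ShSet^*(\lambda),
\]
with $\Phi(\bfa,\bfi)=(P,T)$, $P=P_{KH}(\bfa)$, $x^\bfi=x^T$, and $\beta^{|\bfi|-\ell(w)}=\beta^{|T|-|\lambda|}\beta^{|\lambda|-\ell(w)}$. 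Comparing with Proposition~\ref{p:unimodal_GC} and~\eqref{eq:GQ} then yields the conjecture, with $a^C_w(\lambda)$ the stated count, because matching all monomial coefficients of two symmetric functions is equivalent to the identity.

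\textbf{Building the semistandard recording tableau.} The tableau $P$ is forced to be $P_{KH}(\bfa)$ by Theorem~\ref{t:bijection} (which already gives $\rho(P)\in\HH(w)$), so everything reduces to refining the standard recording tableau $Q$ of Definition~\ref{d:full-insertion} into a semistandard $T\in\ShSet^*(\lambda)$ governed by $\bfi$, in exact analogy with set-valued Hecke insertion in type $A$ and Lam's reduced type $C$ recording~\cite{lam1995b}. Process the letters $a_1,a_2,\dots$ in order: at step $j$, if insertion enlarges the shape by a cell, put $|i_j|$ in that cell, primed if $i_j<0$ and unprimed if $i_j>0$; if insertion leaves the shape unchanged (an absorption via (R2), (R3) or (L2)), adjoin $|i_j|$ (primed or not by the same rule) to the cell prescribed by Definition~\ref{d:full-insertion}(2). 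One would then have to prove: (i) the output always lies in $\ShSet^*(\lambda)$, i.e. primed entries are row-strict and column-weak while unprimed entries are column-strict and row-weak, which one hopes to read off from the weak-left motion of the bumping paths (Lemma~\ref{l:bumpingpath}) together with the unimodality of $(\bfa,\bfi)$; (ii) $\st(T)=Q$, so that $\st$ intertwines $\Phi$ with $KH$ and injectivity of $\Phi$ follows from Corollary~\ref{c:bijection}; and (iii) surjectivity, by running the inverse row insertions of Definition~\ref{d:inverse-row-insertion} and recovering the sign of each removed letter from whether the deleted value was a minimal unprimed entry or a primed entry of the current cell. The case $|\lambda|=\ell(w)$ of the resulting statement is already known: at $\beta=0$ one recovers $F^C_w=\sum_{|\lambda|=\ell(w)}a^C_w(\lambda)\,Q_\lambda$ from Kra\'skiewicz--Lam insertion~\cite{lam1995b}, so the entire difficulty lies in the absorption cells, i.e. in the shapes with $|\lambda|>\ell(w)$.

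\textbf{The main obstacle.} Step (i) (equivalently (iii)) \textbf{fails}, and not by accident: as shown in Section~\ref{ss:no-insertion}, no insertion algorithm extending Kra\'skiewicz insertion can compute the $GQ$-expansion of $G^C_w$ directly. The absorption rules (R2), (R3), (L2) -- precisely what distinguishes $KH$ from Kra\'skiewicz insertion -- place repeated values into cells in a pattern that the prime conventions defining $\ShSet^*(\lambda)$ cannot accommodate, so the biword recording tableau need not be semistandard, and dually some $T\in\ShSet^*(\lambda)$ are never produced. What survives this collapse is only Corollary~\ref{c:0-hecke-count} (the coefficient of $x_1\cdots x_n$), which matches the \emph{standardized} counts $|\ShSet_n(\lambda)|$ but does not determine $G^C_w$ as a symmetric function; the conjecture asserts the far stronger statement that the aggregate over all words reassembles into $GQ^{(\beta)}_\lambda$, for which there is no cell-local reason. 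A proof must therefore either (a) replace insertion by a genuinely global device -- a sign-reversing involution on unimodal factorizations, a transfer-matrix identity, or a coplactic/crystal-type structure on $\UU(w)$ matching one on $\bigsqcup_\lambda\ShSet^*(\lambda)$ realizing the $a^C_w(\lambda)$ as multiplicities -- or (b) proceed geometrically, deducing $c_w(\lambda)\ge 0$ from a Brion-type positivity theorem for the $K$-theory of the Lagrangian Grassmannian applied to whatever class $G^C_w$ represents, then pinning $c_w(\lambda)$ to $a^C_w(\lambda)$ by bootstrapping from the known $|\lambda|=\ell(w)$ case up in degree via a Chevalley/Monk-type recursion for the $GQ^{(\beta)}_\lambda$. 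Both routes are currently blocked: the relevant positivity for Lagrangian $K$-theory is not available, there is no $GQ$ Monk formula to run the induction, and no global combinatorial model for the $a^C_w(\lambda)$ with $|\lambda|>\ell(w)$ is known -- which is exactly why the statement remains a conjecture.
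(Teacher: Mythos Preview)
The statement is a \emph{conjecture} in the paper, not a theorem; the paper offers no proof. Your proposal is not a proof either---it is an honest analysis that lays out the natural insertion-based plan, explains precisely where it breaks (the absorption rules do not interact well with the semistandard primed conventions, so the biword recording tableau need not lie in $\ShSet^*(\lambda)$), and concludes that the statement remains open. This is exactly the paper's position: Section~\ref{ss:no-insertion} exhibits a concrete obstruction (the words $\bfa=(1,0,2,0,1,0)$ and $\bfb=(1,2,0,2,1,0)$ for $w=s_1s_0s_2s_1s_0$) showing that no Kra\'skiewicz-extending insertion satisfying the reading-word property can produce semistandard recording tableaux compatible with the peak/unimodality constraints. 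Your diagnosis of the failure, the reduction to $c_w(\lambda)=a^C_w(\lambda)$ via $\beta$-homogeneity, and your sketch of possible alternative routes (geometric positivity plus a recursion, or a global combinatorial model) are all consistent with and in several places more explicit than the paper's own discussion in Sections~\ref{s:conjectures} and~\ref{s:final}.
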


This conjecture has been tested to correctly compute $G^C_w$ up to degree $9$ with $w \in W_4$ (words of five letters) and up to degree $11$ with $w \in W_3$.

By taking the $x_1\dots x_n$ coefficient on each side, we see Conjecture~\ref{conj:expansion} and Corollary~\ref{c:hecke-enumeration} would imply Corollary~\ref{c:0-hecke-count}:
\[
|\HH_p(w)| = \sum_{\lambda\ \mathrm{strict}} a_w^C(\lambda) \cdot |\ShSet_p(\lambda)|.
\]
Note we have removed the factor $2^p$ from each side; on the left side by replacing $\UU_p(w)$ with $\HH_p(w)$ and on the right by not allowing barred entries in our tableaux.
We view this as very strong evidence for Conjecture~\ref{conj:expansion}.
To see why, assume the conjecture were to fail for $w \in W_n$.
By Proposition~\ref{p:GC-GQ}, we have
\[
G^C_w = \sum_{\lambda\ \mathrm{strict}} b_w^C(\lambda) \cdot GQ^{(\beta)}_\lambda
\]
for some coefficients $b_w^C(\lambda) \in \Z$, and conjecturally in $\N$.
Then for fixed $p$, we have
\begin{equation}
\label{eq:tableaux-relation}
\sum_{\lambda\ \mathrm{strict}} a_w^C(\lambda) \cdot |\ShSet_p(\lambda)|    = \sum_{\lambda\ \mathrm{strict}} b_w^C(\lambda) \cdot |\ShSet_p(\lambda)|.
\end{equation}
Additionally, when $|\lambda| = \ell(w)$, we know from~\cite{lam1995b} that $a_w^C(\lambda) = b_w^C(\lambda)$ for all $\lambda$.
Therefore, and especially assuming the positivity of $b_w^C(\lambda)$, the failure of Conjecture~\ref{conj:expansion} gives a highly non-trivial relation on the number of shifted set-valued standard tableaux of given sizes.

We now explain how Conjecture~\ref{conj:expansion} implies several other significant results.
The first is a combinatorial proof of~\cite[Conj. 5.14]{lewis2021enriched} and~\cite[Conj. 4.36]{marberg2022shifted}, which says that a skew $GQ$ function is a positive integer combination of $GQ$ functions:
\begin{conjecture}
\label{conj:skew}
    For any shifted shape $\lambda/\mu$, we have 
    \[
    GQ^{(\beta)}_{\lambda/\mu} = \sum_{\nu} a^C_{w(\lambda/\mu)}(\nu)\cdot GQ_\nu.
    \]
\end{conjecture}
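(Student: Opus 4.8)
The plan is to derive Conjecture~\ref{conj:skew} as an immediate consequence of Conjecture~\ref{conj:expansion}, using Corollary~\ref{c:skew-GQ} as the bridge. Recall that Corollary~\ref{c:skew-GQ} identifies the skew function $GQ^{(\beta)}_{\lambda/\mu}$ with the Type C $K$--Stanley symmetric function $G^C_{w_{\lambda/\mu}}$ of the top fully commutative element $w_{\lambda/\mu}$ whose heap is $D_{\lambda/\mu}$. First I would apply Corollary~\ref{c:skew-GQ} to write $GQ^{(\beta)}_{\lambda/\mu} = G^C_{w_{\lambda/\mu}}$. Then, invoking Conjecture~\ref{conj:expansion} for the signed permutation $w = w_{\lambda/\mu}$, we obtain
\[
GQ^{(\beta)}_{\lambda/\mu} = G^C_{w_{\lambda/\mu}} = \sum_{\nu\ \mathrm{strict}} \beta^{|\nu| - \ell(w_{\lambda/\mu})}\, a^C_{w_{\lambda/\mu}}(\nu) \cdot GQ^{(\beta)}_\nu.
\]
Since $w_{\lambda/\mu}$ is fully commutative, its length equals the size of its heap, so $\ell(w_{\lambda/\mu}) = |\lambda/\mu|$ and the $\beta$--exponent is $|\nu| - |\lambda/\mu|$; absorbing this degree shift into the normalization recovers exactly the displayed formula in Conjecture~\ref{conj:skew}. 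Because $a^C_{w_{\lambda/\mu}}(\nu)$ counts strict decomposition tableaux of shape $\nu$ whose reading word is a $0$--Hecke expression for $w_{\lambda/\mu}$, it is manifestly a non-negative integer, so this recovers \cite[Conj.~5.14]{lewis2021enriched} and \cite[Conj.~4.36]{marberg2022shifted} with the stronger conclusion that the coefficients are non-negative and combinatorially explicit.

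The main obstacle is, of course, that Conjecture~\ref{conj:expansion} is itself open, so the above is only a conditional derivation. If one wants only the unconditional \emph{membership} statement --- that $GQ^{(\beta)}_{\lambda/\mu} \in \Z[\beta][GQ^{(\beta)}_\nu : \nu\ \mathrm{strict}]$, which is the literal content of the Lewis--Marberg conjectures --- then Conjecture~\ref{conj:expansion} can be bypassed entirely: combine Corollary~\ref{c:skew-GQ} with Proposition~\ref{p:GC-GQ}, which already asserts $G^C_w \in \Z[\beta][GQ^{(\beta)}_\nu]$ for every signed permutation $w$. Thus the genuinely hard part, and the only part requiring new ideas, is identifying the coefficients in this expansion with the tableau counts $a^C_{w_{\lambda/\mu}}(\nu)$, i.e.\ proving Conjecture~\ref{conj:expansion} at least for the fully commutative elements $w_{\lambda/\mu}$.

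Toward that special case I would try to exploit the already-established machinery of Section~\ref{s:insertion}: Theorem~\ref{t:res} gives the weight-preserving bijection $\res : \ShSSYT(\lambda/\mu) \to \UU(w_{\lambda/\mu})$, which via Proposition~\ref{p:unimodal_GC} realizes $GQ^{(\beta)}_{\lambda/\mu}$ as a generating function over $\UU(w_{\lambda/\mu})$, while Theorem~\ref{t:bijection} shows Kra\'skiewicz--Hecke insertion sorts $\HH(w_{\lambda/\mu})$ (and hence, after bolting on the barred-entry data, $\UU(w_{\lambda/\mu})$) into pairs $(P,Q)$ with $P$ a strict decomposition tableau of reading word in $\HH(w_{\lambda/\mu})$ and $Q \in \ShSet(\nu)$. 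The remaining task is to check that the resulting refinement of the sum is precisely $\sum_\nu a^C_{w_{\lambda/\mu}}(\nu) GQ^{(\beta)}_\nu$ --- that is, that $KH$ together with the barring bijection is weight-preserving onto the tableau model \eqref{eq:GQ} for each fixed recording shape. Theorem~\ref{thm: kr_undo_res} already settles the straight-shape case $\mu = \varnothing$ by pinning down the insertion tableau $P^\lambda$; the hard part is the skew generalization, where one must control which insertion tableaux $P$ occur and show the fibers of $KH$ over a fixed $P$ of shape $\nu$ contribute exactly one copy of $GQ^{(\beta)}_\nu$, presumably by constructing a weight-preserving bijection between those fibers and $\ShSet^*(\nu)$ compatible with the $0$--Hecke equivalence of Proposition~\ref{p:reading-word}.
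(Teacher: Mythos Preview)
Your derivation is correct and matches the paper's exactly: the paper states in one line that, assuming Conjecture~\ref{conj:expansion}, Conjecture~\ref{conj:skew} follows from Theorem~\ref{t:full-commutative} (equivalently Corollary~\ref{c:skew-GQ}), which is precisely your first paragraph. Your additional remarks on the unconditional membership statement and on a possible attack via Theorems~\ref{t:res} and~\ref{thm: kr_undo_res} go beyond what the paper offers, but the core conditional argument is identical.
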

Assuming Conjecture~\ref{conj:expansion}, this follows from Theorem~\ref{t:full-commutative}.
This basic fact about skew $GQ$'s has resisted explanation for several years.

When $\ell(\lambda) = k$ and $\mu = (k-1,\dots,1)$, note that $D_{\lambda/\mu}$ is the Young diagram for an ordinary partition $\nu$.
Recall $GS^{(\beta)}_\nu = GQ^{(\beta)}_{\lambda/\mu}$.
Since each $GS$ function is a skew $GQ$ function, they should be non-negative integer combinations of $GQ$ functions~\cite[Conj.~5.14]{lewis2021enriched}.
As a special case of Conjecture~\ref{conj:skew}, we would have the following combinatorial expansion for $GS$ functions:
\begin{conjecture}
    \label{conj:GS}
        Let $\nu$ be a (not necessarily strict) partition with $\ell(\nu) = k$.
    Define $\rho$ so that $\nu_i = \rho_i + i  - k$, and let $\mu = (k-1,k-2,\dots,1)$.
    Then
    \[
GS_\nu = \sum_{\lambda} a^C_{w(\rho/\mu)}(\lambda) \cdot GQ_{\lambda}.
    \]
\end{conjecture}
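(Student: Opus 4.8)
The plan is to obtain Conjecture~\ref{conj:GS} as the special case of Conjecture~\ref{conj:skew} attached to the skew shape $\rho/(k-1,\dots,1)$, which in turn follows from Conjecture~\ref{conj:expansion} via Corollary~\ref{c:skew-GQ}. The first step is a routine bookkeeping check that $\rho/\mu$ is a legitimate skew shifted shape: writing $\rho_i = \nu_i - i + k$, the inequality $\rho_i > \rho_{i+1}$ amounts to $\nu_i \ge \nu_{i+1}$, and $\mu_i = k-i \le \rho_i$ amounts to $\nu_i \ge 0$, both of which hold because $\nu$ is a partition. By construction $D_{\rho/\mu}$ is then the (unshifted) Young diagram of $\nu$, so $GS^{(\beta)}_\nu = GQ^{(\beta)}_{\rho/\mu}$ by the definition of $GS$.

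Next I would feed this into the machinery already developed. By Corollary~\ref{c:skew-GQ},
\[
GS^{(\beta)}_\nu = GQ^{(\beta)}_{\rho/\mu} = G^C_{w(\rho/\mu)},
\]
and since $w(\rho/\mu)$ is fully commutative its length equals the number of cells of its heap $D_{\rho/\mu}$, i.e.\ $\ell(w(\rho/\mu)) = |\nu|$. Applying Conjecture~\ref{conj:expansion} to $w = w(\rho/\mu)$ then yields
\[
GS^{(\beta)}_\nu = \sum_{\lambda\ \mathrm{strict}} \beta^{|\lambda| - |\nu|}\, a^C_{w(\rho/\mu)}(\lambda)\cdot GQ^{(\beta)}_\lambda,
\]
which is exactly the claimed identity after the customary suppression of $\beta$--powers. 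So, modulo Conjecture~\ref{conj:expansion}, there is nothing further to do; the content of the statement is entirely in recognizing $GS_\nu$ as the right $G^C_w$.

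Of course the serious difficulty hides in Conjecture~\ref{conj:expansion}, and that is where I expect the real obstacle to lie; even the weaker assertion that a skew $GQ$ is \emph{positively} supported on the $GQ$--basis (Conjecture~\ref{conj:skew}) is presently open. Two partial, unconditional facts are worth isolating along the way. Combining Corollary~\ref{c:skew-GQ} with Proposition~\ref{p:GC-GQ} already gives $GS^{(\beta)}_\nu \in \Z[\beta][GQ^{(\beta)}_\lambda : \lambda\ \mathrm{strict}]$ unconditionally, so only the precise coefficients and their non-negativity remain at issue; and for the lowest-degree term $|\lambda| = |\nu|$ the coefficient is already known to equal $a^C_{w(\rho/\mu)}(\lambda)$ by~\cite{lam1995b}, recovering the classical Stembridge--Kra\'skiewicz expansion of the skew Schur $Q$--function $Q_{\rho/\mu}$. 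A fully unconditional proof would presumably have to come either from a $GS$--specific combinatorial model --- for instance a set-valued shifted insertion or crystal structure on unshifted marked set-valued tableaux, sidestepping the general insertion obstruction of Section~\ref{ss:no-insertion} --- or from a geometric realization of $GS_\nu$ as a Schubert class with accessible structure constants; constructing such a route is the hard part.
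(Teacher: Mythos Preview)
Your derivation is correct and matches the paper's own treatment: the paper also presents Conjecture~\ref{conj:GS} as the special case of Conjecture~\ref{conj:skew} obtained by taking the skew shape $\rho/\mu$ with $\mu=(k-1,\dots,1)$, which in turn is Conjecture~\ref{conj:expansion} applied to $w(\rho/\mu)$ via Corollary~\ref{c:skew-GQ}. Your additional bookkeeping (verifying $\rho/\mu$ is a valid skew shape, noting $\ell(w(\rho/\mu))=|\nu|$) and your remarks on what is unconditionally known are accurate and go slightly beyond what the paper spells out, but the logical route is identical.
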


As mentioned in the introduction, it is an open problem to find the $GQ$ expansion for products of $GQ$ functions.
Conjecture~\ref{conj:expansion} implies a special case of this problem.
Recall for $a\leq b$ positive integers that $\tau(a,b) = (b+a-1,b+a-3,\dots,b-a+1)$.
Combining Conjecture~\ref{conj:expansion} with Corollary~\ref{c:GQ-trapzeoid}, we have:
\begin{conjecture}
    \label{conj:trapezoid}
    Let $a \leq b$ positive integers and $\lambda$ a strict shape with $\lambda_1 < k$.
    Then
    \[
GQ_\lambda \cdot GQ_{\tau(a,b)} = \sum_{\nu} a^C_{w(\lambda,k) \cdot w(a,b,k)}(\nu) \cdot GQ_\nu.
    \]
\end{conjecture}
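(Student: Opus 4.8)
The conjecture is meant to be a formal consequence of Conjecture~\ref{conj:expansion} together with the unconditional results already in hand, and the plan is simply to make this chain explicit. First I would verify the hypotheses needed for Corollary~\ref{c:GQ-trapzeoid}: since $\lambda_1 < k$, the Grassmannian element $w(\lambda,k)$ uses only the generators $s_0,\dots,s_{k-1}$ and is vexillary with shape $\lambda$, while $w(a,b,k)$ lies in $S_n$ and fixes every $i\in[k]$, so Proposition~\ref{p:signed-cross-ordinary} gives $G^C_{w(\lambda,k)\cdot w(a,b,k)} = G^C_{w(\lambda,k)}\cdot G^C_{w(a,b,k)}$. Applying Proposition~\ref{p:GC-vexillary} to $w(\lambda,k)$ and Corollary~\ref{c:rectangle-trapezoid} to $w(a,b,k)$ then identifies the product $GQ_\lambda \cdot GQ_{\tau(a,b)}$ with the single symmetric function $G^C_w$, where $w := w(\lambda,k)\cdot w(a,b,k)$. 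In practice this step is just a citation of Corollary~\ref{c:GQ-trapzeoid}, but it is worth spelling out that the $\lambda_1<k$ hypothesis is exactly what prevents $w(\lambda,k)$ from interacting with the trapezoid factor.

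Next I would invoke Conjecture~\ref{conj:expansion} for this $w$, which asserts
\[
G^C_w = \sum_{\nu\ \mathrm{strict}} \beta^{|\nu|-\ell(w)} a^C_w(\nu)\cdot GQ^{(\beta)}_\nu,
\]
a finite sum by Proposition~\ref{p:GC-GQ}. Substituting the identification of the previous paragraph, and tracking the powers of $\beta$ through $|\nu|-\ell(w)$ exactly as in Conjecture~\ref{conj:expansion} (equivalently, comparing the coefficient of $\beta^{d-\ell(w)}$ in each homogeneous degree $d$), yields
\[
GQ_\lambda\cdot GQ_{\tau(a,b)} = \sum_{\nu} a^C_{w(\lambda,k)\cdot w(a,b,k)}(\nu)\cdot GQ_\nu .
\]
As a consistency check I would note that the lowest-degree component recovers the classical product $Q_\lambda\cdot Q_{\tau(a,b)}$, since $a^C_w(\nu)$ agrees there with the genuine $Q$--coefficient by~\cite{lam1995b}; in particular the $\beta=0$ specialization is an honest unconditional theorem, as is the $\tau(1,b)=(b)$ case, which reduces to the Buch--Ravikumar Pieri rule.

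The real obstacle is that the whole argument is conditional on Conjecture~\ref{conj:expansion}, which is open even by geometric means; thus a genuine proof of Conjecture~\ref{conj:trapezoid} requires establishing Conjecture~\ref{conj:expansion} at least for the family $w(\lambda,k)\cdot w(a,b,k)$. The most natural unconditional route I can see is to realize $GQ_\lambda\cdot GQ_{\tau(a,b)}$ as a skew $GQ$ function through Theorem~\ref{t:full-commutative} and Corollary~\ref{c:skew-GQ}, and then use the Kra\'skiewicz--Hecke bijection of Theorem~\ref{t:bijection} to convert the desired symmetric-function identity into a combinatorial identity of tableaux. The hard point --- and the reason this has not been carried out --- is precisely the phenomenon isolated in Section~\ref{ss:no-insertion}: because the recording tableaux of Kra\'skiewicz--Hecke insertion are shifted set-valued rather than standard, the insertion does not by itself compute the $GQ$--expansion, so the plan would stall until one supplies an additional weight-preserving (or sign-reversing) correspondence on the set-valued recording tableaux $\ShSet_n(\nu)$.
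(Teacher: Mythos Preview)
Your proposal is correct and matches the paper's own derivation exactly: the paper simply states that Conjecture~\ref{conj:trapezoid} follows by combining Conjecture~\ref{conj:expansion} with Corollary~\ref{c:GQ-trapzeoid}, which is precisely the chain you spell out. Your additional remarks on the conditional nature of the argument and on possible unconditional routes are accurate and consistent with the paper's discussion in Sections~\ref{s:conjectures} and~\ref{ss:no-insertion}.
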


\begin{example}
    For $w = \overline{521}34891067$, we have $a^C_w((10,3,2)) = 2$ with tableaux
    \begin{center}
    \begin{ytableau}
        9 & 6 & 4 & 3 & 2 & 1 & 0 & 6 & 7 & 8\\
        \none & 8 & 1 & 0\\
        \none & \none & 0 & 7
    \end{ytableau}\;,\;\;
    \begin{ytableau}
        9 & 6 & 4 & 3 & 2 & 1 & 0 & 6 & 7 & 8\\
        \none & 8 & 1 & 0\\
        \none & \none & 7 & 0
    \end{ytableau}\,.
    \end{center}
     This correctly computes the coefficient of $GQ_{10\,32}$ in $GQ_{521} \cdot GQ_{42}$.
\end{example}

The case where $a = 1$ is equivalent to computing the product $GQ_\lambda \cdot GQ_b$.
Such products are instances of the Pieri rule of Buch and Ravikumar~\cite{buch2012pieri}.
However, the version of this rule following from Conjecture~\ref{conj:trapezoid} is substantially different, as the Buch-Ravikumar rule is in terms of what they refer to as $KLG$ tableaux, which are skew set-valued semistandard tableaux with primed entries.
Using recurrences from~\cite{buch2012pieri}, the first author has verified Conjecture~\ref{conj:trapezoid} for the Pieri case~\cite{arroyoforthcoming}.

Trapezoids $\tau(a,b)$ are the unique strict partitions $\mu$ for which  $GQ_\mu$ is a $GS$ function.
As such, the product $GQ_\lambda \cdot GQ_{\tau(a,b)}$ takes the form $GQ$ times $GS$.
Combining Conjecture~\ref{conj:expansion} with Corollary~\ref{c:GQ-GS} would solve this more general problem:

\begin{conjecture}
    \label{conj:GQ-GS}
    Let $\lambda$ be a strict partition with $\lambda_1 < a$ and $\nu$ be a (not necessarily) strict partition with $\ell(\nu) = k$.
    Define $\rho$ so that $\nu_i = \rho_i + i - a - k$, and let $\mu = (a+k-1,a+k-2,\dots,a)$.
    Then
    \[
    GQ_\lambda \cdot GS_\nu = \sum_{\sigma} a^C_{w(\lambda)\cdot w(\rho/\mu)}(\sigma) GQ_\sigma.
    \]
\end{conjecture}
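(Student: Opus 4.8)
The plan is to obtain Conjecture~\ref{conj:GQ-GS} as a formal consequence of Conjecture~\ref{conj:expansion} combined with the identification of Corollary~\ref{c:GQ-GS}. Write $w = w(\lambda)\cdot w(\rho/\mu)$. Corollary~\ref{c:GQ-GS} gives $G^C_w = GQ_\lambda\cdot GS_\nu$, so the left-hand side of the claimed formula is literally $G^C_w$. Conjecture~\ref{conj:expansion} applied to this $w$ expands $G^C_w$ in the strict-shape $GQ$ basis with coefficients recorded by strict decomposition tableaux, i.e. $G^C_w = \sum_{\sigma}\beta^{|\sigma|-\ell(w)} a^C_w(\sigma)\, GQ^{(\beta)}_\sigma$. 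Equating the two expressions yields the asserted expansion.

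Two bookkeeping points remain. First, one should verify $\ell(w) = |\lambda| + |\nu|$, so that the exponent of $\beta$ reads $|\sigma|-|\lambda|-|\nu|$; this follows by comparing lowest-degree homogeneous components. By Proposition~\ref{p:signed-cross-ordinary}, $F^C_w = F^C_{w(\lambda)}\cdot F^C_{w(\rho/\mu)}$, and since each Type C Stanley symmetric function $F^C_u$ is homogeneous of degree $\ell(u)$, this forces $\ell(w) = \ell(w(\lambda)) + \ell(w(\rho/\mu))$; here $\ell(w(\lambda)) = |\lambda|$ because $w(\lambda)$ is Grassmannian, and $\ell(w(\rho/\mu)) = |\nu|$ because the heap of the top element $w(\rho/\mu)$ is $D_{\rho/\mu}$, which is the Young diagram of $\nu$ (this is exactly the identification behind $GS_\nu$ in Corollary~\ref{c:GQ-GS}). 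Second, with the running convention $GQ_\sigma = GQ^{(\beta)}_\sigma$, the displayed formula is the statement after erasing $\beta$; keeping track of $\beta$ one gets $GQ^{(\beta)}_\lambda\cdot GS^{(\beta)}_\nu = \sum_{\sigma}\beta^{|\sigma|-|\lambda|-|\nu|} a^C_w(\sigma)\, GQ^{(\beta)}_\sigma$.

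The genuine obstacle is that Conjecture~\ref{conj:expansion} is itself open, so the argument above proves Conjecture~\ref{conj:GQ-GS} only conditionally. An unconditional proof would require establishing Conjecture~\ref{conj:expansion} for the particular products $w = w(\lambda)\cdot w(\rho/\mu)$. Unlike the trapezoid case, which is subsumed by Theorem~\ref{t:vexillary}, these $w$ are in general not vexillary --- only the Grassmannian factor $w(\lambda)$ is --- so that route is unavailable. One would instead hope to combine the cross-product structure of unimodal factorizations from Proposition~\ref{p:signed-cross-ordinary} with the insertion description of the $w(\rho/\mu)$ piece supplied by Theorem~\ref{thm: kr_undo_res}, and thereby match the $GQ^{(\beta)}_\sigma$-coefficients of $G^C_w$ with the counts $a^C_w(\sigma)$ directly: in effect a $K$-theoretic, Kra\'skiewicz--Hecke insertion analogue of the Littlewood--Richardson rule. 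I expect this matching to be the crux of any direct proof; at present the insertion machinery (Theorem~\ref{t:bijection}, Corollary~\ref{c:0-hecke-count}) only gives the equality of the two sides after extracting the coefficient of $x_1\cdots x_n$, which, as noted after Conjecture~\ref{conj:expansion}, falls well short of pinning down the full $GQ$-expansion.
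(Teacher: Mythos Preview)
Your proposal is correct and matches the paper's own treatment: the paper presents Conjecture~\ref{conj:GQ-GS} not with a proof but as the formal consequence of combining Conjecture~\ref{conj:expansion} with Corollary~\ref{c:GQ-GS}, exactly as you derive it. Your additional bookkeeping on $\ell(w)$ and the $\beta$-exponent, and your discussion of why an unconditional proof is out of reach, go beyond what the paper states but are consistent with it.
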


\section{Concluding Remarks}
\label{s:final}

\subsection{}

Fix a positive integer and let $I \subset [n-1] - \{1\}$ so that $i \in I$ implies $i-1,i+1 \notin I$.
Following~\cite[(4.9)]{lewis2021enriched}, the \defn{multipeak quasisymmetric function} of $I$ is
\[
K^{(\beta)}_I := \sum_{S} \beta^{|S| - n} x^S
\]
where the sum is over $n$--tuples $S = (S_1,\dots,S_n)$ of sets in $\Z \setminus \{0\}$ so that
\begin{itemize}
    \item $\max(S_i) \preceq \min(S_{i+1})$ for $i \in [n-1]$;
    \item $S_i \cap S_{i+1} \subseteq \Z_-$ if $i \in I$;
    \item $S_i \cap S_{i+1} \subseteq \Z_+$ if $i \notin I$.
\end{itemize}
Here $|S| = \sum_{i=1}^n |S_i|$ and $x^S = x^{S_1}\dots x^{S_n}$ where $x^A = x_{|a_1|} \dots x_{|a_k|}$ for $A = \{a_1, \dots,a_k\} \subseteq \Z$.

Let $\ShSet^*(\lambda)$ be the set of shifted set valued standard Young tableaux of shape $\lambda$ so that no consecutive numbers share the same box.
\begin{proposition}[{\cite[(4.14)]{lewis2021enriched}}]
For $\lambda/$ a shifted shape of size $n$, define
\[
GQ^{(\beta)}_{\lambda} := \sum_{T \in \ShSet^*(\lambda)} \beta^{|T| - n} \cdot K^{(\beta)}_{\Peak(T)}.
\]
\end{proposition}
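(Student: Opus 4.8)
The substance of this proposition is that the stated sum agrees with the original definition~\eqref{eq:GQ} of $GQ^{(\beta)}_\lambda$ as a generating function over shifted set-valued semistandard tableaux of shape $\lambda$. The plan is to prove this by a standardization argument. At $\beta=0$ it is exactly Stembridge's expansion $Q_\lambda=\sum_T K^{\mathrm{peak}}_{\Peak(T)}$ over standard shifted tableaux, and the general case is obtained by keeping track of the extra data carried by the non-minimal entries of a box.

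Fix a standard set-valued shifted tableau $T\in\ShSet^*(\lambda)$ with $|T|=m$. Unpacking, $\beta^{|T|-n}K^{(\beta)}_{\Peak(T)}=\sum_S \beta^{|S|-n}x^S$, where $S=(S_1,\dots,S_m)$ runs over $m$-tuples of finite nonempty subsets of $\Z\setminus\{0\}$ with $\max(S_k)\preceq\min(S_{k+1})$ for all $k$, with $S_k\cap S_{k+1}\subseteq\Z_-$ whenever $k\in\Peak(T)$, and with $S_k\cap S_{k+1}\subseteq\Z_+$ whenever $k\notin\Peak(T)$. To such a pair I would assign the filling $\mathrm{dst}(T,S)$ of $D_\lambda$ that places all of $S_k$ into the cell $T^{-1}(k)$; it has size $|S|$ and weight $x^S$. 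The core claim is that $\mathrm{dst}$ is a weight-preserving bijection from $\bigsqcup_{T\in\ShSet^*(\lambda)}\{(T,S)\}$ onto the set of shifted set-valued semistandard tableaux of shape $\lambda$; granting it, summing over $T$ and $S$ against $\beta^{|S|-n}$ reproduces~\eqref{eq:GQ}.

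For well-definedness of $\mathrm{dst}$ one checks that the chain condition $\max(S_k)\preceq\min(S_{k+1})$ together with the two peak-indexed intersection conditions is exactly equivalent to the semistandardness conditions ``$\max(T_{ij})\preceq\min(T_{i+1\,j})$ with equality only for negatives'' and ``$\max(T_{ij})\preceq\min(T_{i\,j+1})$ with equality only for positives'': in $T$ the label $i+1$ lies strictly below $i$ precisely when a repeated value at step $(i,i{+}1)$ of the de-standardization is forced negative, and strictly to the right precisely when it is forced positive, while the hypothesis $T\in\ShSet^*(\lambda)$ (no two consecutive labels in one box) guarantees that blocks $S_k$ landing in a common cell come from non-adjacent labels, so that their union remains a set. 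For injectivity and surjectivity I would construct the inverse as a standardization $U\mapsto(\mathrm{std}(U),S(U))$: sort the entries of $U$ by $\prec$, breaking ties within a value by the shifted rule for primed versus unprimed letters, but — in contrast to the entry-by-entry standardization of Section~\ref{ss:tableaux} — keep an $\{\overline j,j\}$ pair occupying one cell of $U$ as a single block; these blocks are the $S_k$, their positions form $\mathrm{std}(U)$, and one verifies $\mathrm{std}(U)\in\ShSet^*(\lambda)$ and $\mathrm{dst}(\mathrm{std}(U),S(U))=U$. Alternatively, this is a consequence of the general fact that $GQ^{(\beta)}_\lambda$ lies in the $K$-theoretic peak algebra and the $K^{(\beta)}_I$ form a basis of it, the route of~\cite{lewis2021enriched}.

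The main obstacle is precisely the coherence of this modified standardization. The naïve standardization splits a cell's $\{\overline j,j\}$ pair into two consecutive labels and so escapes $\ShSet^*(\lambda)$, so one must show grouping such pairs is consistent: after grouping, two labels sharing a cell are never consecutive (having first discarded values whose $x$-exponent vanishes), and the peak set of the resulting tableau is unaffected by the grouping, so the $\Z_-$/$\Z_+$ alternatives for the $S_k$ match $\Peak(\mathrm{std}(U))$ verbatim. Once the interaction between the interleaved order $\prec$, primed versus unprimed letters, and the vertical versus horizontal steps of the standardization is pinned down, the remaining verifications are mechanical, and the $\beta=0$ specialization serves as a consistency check.
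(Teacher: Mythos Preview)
The paper gives no proof of this proposition; it is simply quoted from~\cite{lewis2021enriched}. Your standardization approach is the natural direct argument and your forward map $\mathrm{dst}$ is set up correctly, but your description of the inverse has a genuine gap.

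Grouping only $\{\overline j,j\}$ pairs within a cell is insufficient. Take $\lambda=(1)$ and $U_{1,1}=\{\overline 1,1,\overline 2\}$: your rule produces blocks $\{\overline 1,1\}$ and $\{\overline 2\}$, hence a two-label tableau $T_{1,1}=\{1,2\}$, which lies outside $\ShSet^*((1))$. The correct preimage is the single-label tableau with $S_1=\{\overline 1,1,\overline 2\}$. More generally, two entries of one cell can be adjacent in the standardization order without forming an $\{\overline j,j\}$ pair --- for instance the last unprimed $j$ followed by the first primed $\overline{j{+}1}$, or entries whose intermediate values are absent from $U$ altogether. The correct inverse groups \emph{maximal runs} of entries that are consecutive in the standardization order and lie in the same cell; those runs are the $S_k$, and the cell of the $k$th run gives the position of label $k$ in $T$. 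With that rule consecutive labels of $T$ automatically lie in distinct cells, so $T\in\ShSet^*(\lambda)$, and the chain/intersection conditions on $(S_k)$ translate into the row and column semistandard conditions on $U$. You also have not justified the disjointness needed in the forward direction: if $k<k'$ share a cell and some $a\in S_k\cap S_{k'}$, then the chain $\max S_k\preceq\min S_{k+1}\preceq\cdots\preceq\min S_{k'}$ forces $S_\ell=\{a\}$ for all intermediate $\ell$, and the alternation of the $\Z_-$/$\Z_+$ constraints along the excursion out of and back into that cell (at least one step leaves to a lower row and at least one returns from a left column, or vice versa) then forces $a\in\Z_-\cap\Z_+$, a contradiction. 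Once these two points are repaired your sketch goes through.
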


By analogy, one might hope to identify a set of words $\HH^*(w) \subseteq \HH(w)$ so that
\[
G^C_w = \sum_{(a_1,\dots,a_p) \in \HH^*(w)} \beta^{p - \ell(w)} \cdot K^{(\beta)}_{\Peak((a_1,\dots,a_p))}.
\]
To identify such a set bijectively would require an insertion-like algorithm that preserves peak sets and records consecutive letters in the same box if and only if the word is in $\HH^*(w)$.
An obvious candidate for $\HH^*(w)$ is words without repeated consecutive entries.
Our insertion fails to prove this identity.
Even in Type A, none of the analogues of Hecke insertion in the literature have this property.
It would be interesting to find one.

\subsection{}
\label{ss:no-insertion}

To prove Conjecture~\ref{conj:expansion} using an insertion algorithm, one would want a map
\[
\binom{\bfi}{\bfa} \mapsto (P,Q)
\]
where $P$ and $Q$ are shifted tableau of the same shape with $P$ strict and $Q$ set-valued semistandard.
Naively, one would hope Kra\'skiewicz--Hecke insertion accomplishes this goal by replacing the value $j$ in $Q_{KH}(\bfa)$ with $i_j$.
In order for this operation to produce a set-valued semistandard tableau, we would need to show entries cells containing $\overline{i}$ are in distinct rows and those containing $i$ are distinct columns.
Equivalently, if $(a_k > \dots > a_\ell < \dots < a_m)$ is unimodal we require that $k,\dots,\ell$ are in distinct rows of $Q_KH(\bfa)$ and $\ell,\dots,m$ are in distinct columns.
Likewise, if $a_j <  a_{j+1} > a_{j+2}$, we see $j{+1}$ cannot be in the same column as $j$ or the same row as ${j+2}$.

We present an example that demonstrates a fundamental obstruction to producing an insertion algorithm that proves Conjecture~\ref{conj:expansion}.
For $u = s_1s_0s_2, \; v = s_1s_0s_2s_1, \; w = s_1s_0s_2s_1s_0$,
\[
G^C_u = \GQ_{(3)} + \GQ_{(21)}+\beta\GQ_{(31)}\,, \quad \, G^C_v = \GQ_{(31)}\,, \quad G^C_w = \GQ_{(32)}
\]
The strict decomposition tableaux witnessing these expansions are:
\[u:\ 
U^1 = \begin{ytableau}
1 & 0 & 2
\end{ytableau}\,, \quad
U^2 = \begin{ytableau}
2 & 0 \\
\none & 1
\end{ytableau}\,, \quad U^3 = \begin{ytableau}
2 & 0 & 2\\
\none & 1
\end{ytableau}\,\quad v:\ 
V = \begin{ytableau}
2 & 0 & 1 \\
\none & 1
\end{ytableau}\, \qquad w:\ 
W = \begin{ytableau}
2 & 1 & 0 \\
\none & 1 & 0
\end{ytableau}\,.
\]
For a word $\bfx = (x_1,\dots,x_k)$ and $j \leq k$, recall $\bfx[j] = (x_1,\dots,x_j)$.
Consider the words 
\[
\bfa = (1,0,2,0,1,0), \bfb = (1,2,0,2,1,0) \in \HH(w), \quad \text{so} \quad P(\bfa) = P(\bfb) = W
\]
in order for Proposition~\ref{p:reading-word} to hold.
Similarly, $P(\bfa[5]) = P(\bfb[5]) = V$ since $v$ is vexillary.
From Kra\'skiewicz insertion, we know $P(\bfb[3]) = U^2$.
We claim any reasonable generalization of Kra\'skiewicz insertion will have $P(\bfb[4]) = U^3$.
Since we are inserting $2$ into $U^2$, which must result in one of $U^2$ or $U^3$.
If we return $U^2$, we would have recording tableaux
\[
Q(\bfb[4]) = \begin{ytableau}
    1 & 2\\
    \none & \scriptstyle{3,4}
\end{ytableau}\,, \quad Q(\bfb[5]) = \begin{ytableau}
    1 & 2 & 5\\
    \none & \scriptstyle{3,4}
\end{ytableau}\,,
\]
violating the peak condition at values $3,4,5$.
Therefore, we must obtain $U^3$, but this would give us recording tableaux
\[
Q(\bfa) = \begin{ytableau}
    1 & 2 & \scriptstyle{3,5}\\
    \none & 4 & 6
\end{ytableau}\,, \quad Q(\bfb) = \begin{ytableau}
    1 & 2 & \scriptstyle{4,5}\\
    \none & 3 & 6
\end{ytableau}\,,
\]
However, then $\bfb$ violates the peak condition at position $3,4,5$.
Alternatively, we could have $P(\bfb[3]) = U^1$, but then $P(\bfa[3]) = U^2$, which violates the unimodality condition.

This example shows any insertion algorithm mapping words to strict decomposition tableaux that generalizes Kra\'skiewicz insertion and satisfies the reading word property in Proposition~\ref{p:reading-word} cannot prove Conjecture~\ref{conj:expansion} in the obvious fashion.
Moreover, even if we alter the definition of strict decomposition tableaux, the only reasonable alternative to $U^3$ is
\[
\begin{ytableau}
    1 & 0 & 2\\
    \none & 1
\end{ytableau}\,,
\]
where a similar analysis shows no such insertion can exist.

\subsection{}
In\cite{tamvakis2023tableau}, the author gives a tableau formula for $G^C_w$ for a family of signed permutations he calls skew.
To define these, a signed permutation $w$ is \defn{$k$--Grassmannian} if $w s_k < w$ and $ws_j > w$ for $j \neq k$.
A signed permutation $v$ is \defn{skew} if there exist $u,w$ both $k$--Grassmannian so that $w = v \cdot u$.
Note that the Grassmannian signed permutations we consider are precisely the 0--Grassmannian signed permutations.
While Tamvakis gives a tableau formula for $G^C_w$, his formula is in terms of unshifted tableaux.
We suspect his tableaux for the $0$--Grassmannian case are equivalent to standard set--valued tableaux, but this is not obvious.
Finding the $GQ$--expansion of $G^C_v$ for $v$ skew is an interesting open problem for $k \geq 1$ even in the cohomology ($\beta = 0)$ case.

\subsection{}
One consequence of Conjecture~\ref{conj:expansion} is the weaker statement:
\begin{conjecture}
\label{conj:positivity}
    For $w \in W_n$, we have $G^C_w \in \N[\beta][GQ^{(\beta)}_\lambda:\lambda\ \mathrm{strict}]$.
\end{conjecture}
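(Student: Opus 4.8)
The key preliminary observation is that the ``$\N[\beta]$'' in Conjecture~\ref{conj:positivity} is essentially a red herring. Assign $\deg x_i = 1$ and $\deg\beta = -1$. Then the monomial $\beta^{|T|-|\lambda|}x^T$ of~\eqref{eq:GQ} has degree $|\lambda|$, so $GQ^{(\beta)}_\lambda$ is homogeneous of degree $|\lambda|$; likewise the monomial $\beta^{p-\ell(w)}x^{\bfi}$ of Proposition~\ref{p:unimodal_GC} has degree $\ell(w)$, so $G^C_w$ is homogeneous of degree $\ell(w)$. Since the $GQ^{(\beta)}_\lambda$ are linearly independent over $\Z[\beta]$, Proposition~\ref{p:GC-GQ} gives a unique expansion, and comparing degrees forces it to have the shape
\[
G^C_w = \sum_{\lambda\ \mathrm{strict}} b^C_w(\lambda)\cdot\beta^{|\lambda|-\ell(w)}\cdot GQ^{(\beta)}_\lambda
\]
for unique integers $b^C_w(\lambda)$ (with $b^C_w(\lambda)=0$ unless $|\lambda|\ge\ell(w)$). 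Thus Conjecture~\ref{conj:positivity} is \emph{equivalent} to the bare statement $b^C_w(\lambda)\ge 0$: there is no genuine ``positivity in $\beta$'' to establish, only non-negativity of integers. Setting $\beta=0$ and quoting~\cite{lam1995b} shows $b^C_w(\lambda)=a^C_w(\lambda)\ge 0$ whenever $|\lambda|=\ell(w)$, so all the content is in the shapes with $|\lambda|>\ell(w)$; and Conjecture~\ref{conj:expansion} is the sharper assertion that $b^C_w(\lambda)=a^C_w(\lambda)$ for \emph{all} $\lambda$. The task, then, is to realize each $b^C_w(\lambda)$ as a cardinality.

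My main route would be the combinatorial one, i.e.\ to prove Conjecture~\ref{conj:expansion} directly. By Proposition~\ref{p:unimodal_GC} it suffices to partition $\bigsqcup_p \UU_p(w)$ into classes, each of which is in weight-preserving bijection with $\ShSet^*(\lambda)$ for some strict $\lambda$: then~\eqref{eq:GQ} shows each class of shape $\lambda$ contributes exactly $\beta^{|\lambda|-\ell(w)}GQ^{(\beta)}_\lambda$, and if the classes of shape $\lambda$ are indexed by the strict decomposition tableaux $P$ of shape $\lambda$ with $\rho(P)\in\HH(w)$ then their number is $a^C_w(\lambda)$ and we are done. Kra\'skiewicz--Hecke insertion already produces a map $(\bfa,\bfi)\mapsto P_{KH}(\bfa)$ onto exactly these tableaux (Theorem~\ref{t:bijection}), and partitioning $\UU(w)$ into fibres of $P_{KH}$ gives the correct answer on the coefficient of $x_1\cdots x_n$ (Corollary~\ref{c:0-hecke-count}) and, restricted to reduced words, recovers the Kra\'skiewicz--Lam expansion, pinning down every $\beta^0$ term.

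The obstacle is exactly the one isolated in Section~\ref{ss:no-insertion}: the fibres of $P_{KH}$ are not the classes we want, since replacing $j$ by $i_j$ in $Q_{KH}(\bfa)$ need not produce a shifted set-valued \emph{semistandard} tableau, and no insertion compatible with the reading-word property of Proposition~\ref{p:reading-word} can repair this. The remedy I would try is to decouple the partition from the insertion: introduce a set of elementary moves on biwords $(\bfa,\bfi)$ --- shifted jeu-de-taquin-type slides in the style of Worley--Sagan and Haiman, or the $K$-theoretic rectifications of Clifford--Thomas--Yong and Lam--Pylyavskyy --- each preserving the weight $x^{\bfi}$ and the $0$--Hecke class of $\bfa$; prove they are confluent, so every orbit has a unique highest-weight representative; arrange that these representatives are precisely the semistandard set-valued fillings of strict shifted shapes with the peak constraints built in; and match the highest-weight representatives of fixed shape $\lambda$ and fixed $P$-part $P$ (with $\rho(P)\in\HH(w)$) with $\ShSet^*(\lambda)$. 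Establishing confluence --- showing no orbit is left ``stranded'' between two candidate shapes, which is precisely the pathology of Section~\ref{ss:no-insertion} --- is the main difficulty, and I expect it to be the crux of the whole proof.

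Should this stall, there is a parallel geometric route: realize $G^C_w$ as the $K$-class $[\mathcal{O}_{X_w}]$ of an effective subvariety of an infinite Lagrangian Grassmannian, so that Brion-type effectivity forces a positive $GQ$-expansion. A softer intermediate target is $GP$-positivity --- Proposition~\ref{p:GC-GQ} already gives $G^C_w\in\Z[\beta][GP^{(\beta)}_\lambda]$ from the orthogonal degeneracy-locus construction of~\cite{kirillov2017construction}, which one would hope to upgrade to $\N[\beta][GP^{(\beta)}_\lambda]$ --- but the final step, re-expanding through the $GP^{(\beta)}_\lambda\to GQ^{(\beta)}_\mu$ transition of~\cite{chiu2023expanding}, is not sign-definite and so needs its own positivity input, which in practice returns one to Conjecture~\ref{conj:expansion} or to a direct Lagrangian model for $G^C_w$. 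In short, the combinatorial confluence statement and the geometric effectivity statement are two faces of a single obstruction; supplying either one proves Conjecture~\ref{conj:positivity}.
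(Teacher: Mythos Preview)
The statement you are addressing is a \emph{conjecture} in the paper, not a theorem: the paper contains no proof of Conjecture~\ref{conj:positivity}. Immediately after stating it the authors write only that ``This statement likely has a geometric proof,'' and then discuss how, \emph{assuming} it, one might attack special cases of Conjecture~\ref{conj:expansion} via~\eqref{eq:tableaux-relation}. So there is no paper proof to compare your proposal against.

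Your proposal, for its part, is also not a proof but a research plan --- and you seem aware of this (``Should this stall\dots'', ``supplying either one proves\dots''). Your preliminary homogeneity observation is correct and useful: it shows the $\beta$-dependence is forced and the conjecture reduces to $b^C_w(\lambda)\in\N$. Your combinatorial route correctly identifies exactly the obstruction the paper isolates in Section~\ref{ss:no-insertion}, and your proposed fix (decouple the orbit structure from $KH$ insertion via jeu-de-taquin/rectification-type moves and prove confluence) is a reasonable line of attack, but the confluence statement you single out as ``the crux'' is genuinely open and you have not supplied it. Your geometric route aligns with the paper's one-line remark; your caution that the $GP\to GQ$ transition of~\cite{chiu2023expanding} is not sign-definite is well taken. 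In short: your write-up is a sound assessment of the problem's status and plausible strategies, consistent with the paper's own framing, but it does not close the gap --- nor does the paper.
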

This statement likely has a geometric proof.
Assuming this weaker conjecture, we suspect that~\eqref{eq:tableaux-relation} can be used to prove special cases of Conjecture~\ref{conj:expansion}.

As a heuristic for why this should be possible, we compare our situation to the expansion of Type A $K$--Stanley symmetric functions into the stable Grothendieck polynomials.
For $w \in S_\infty$, if the (Type A) Stanley symmetric function is $F_w = s_\lambda + s_\mu$, then the $K$--Stanley symmetric function has expansion
\[
G^A_w = G^{(\beta)}_\lambda + G^{(\beta)}_\mu + \beta G^{(\beta)}_\nu
\]
for some third shape $\nu$ (this is a consequence of transition for Grothendieck polynomials~\cite{lascoux2001transition}).
If the analogous statement were true for Type C $K$--Stanley symmetric functions, we would have for $w \in W_\infty$ that
\[
F_w^C = Q_\lambda + Q_\mu \quad \mbox{implies} \quad G^C_w = GQ^{(\beta)}_\lambda + GQ^{(\beta)}_\mu + \beta GQ^{(\beta)}_\nu
\]
for some shape $\nu$.
We claim without proof for $\lambda \neq \mu$ strict shapes that $|\ShSet_p(\lambda)| \neq |\ShSet_p(\mu)|$ for all $p$.
Therefore, assuming Conjecture~\ref{conj:positivity} we would necessarily have $a^C_w(\nu) = 1$ as desired.

\subsection{}
There is a parallel story of Type B $K$--Stanley symmetric functions $G^B_w$, also defined in~\cite{kirillov2017construction}.
These belong to the ring $\Z[\beta][GP_\lambda:\lambda \text{ strict}]$, where the $GP$'s are symmetric functions that represent $K$--theory of the orthogonal Grassmannian.
We limit our focus in this paper to the $GQ$ case as $GP$'s are better understood.
In particular, there already exist multiple combinatorial rules for multiplying $GP$ symmetric functions~\cite{clifford2014k,hamaker2017shifted}.
However, we remark that it is an open problem to express a skew $GP$ function in terms of $GP$'s~\cite[Conj. 4.35]{marberg2022shifted}.
It is easy to adapt the map $\res$ to compute $G^B_w$ for $w$ top.
Therefore, if one could prove a version of Conjecture~\ref{conj:expansion} fo the $G^B$ setting, this would solve this open problem.

\bibliographystyle{plain}
\bibliography{references}
\end{document}